\documentclass[final,5p,twocolumn]{elsarticle}%

\journal{Automatica}

\usepackage{graphicx}      
\usepackage{natbib}        

\usepackage{bookmark}

\usepackage{epsfig}

\usepackage{amsmath}%

\graphicspath{{fig/}}

\usepackage{amsfonts}
\usepackage{amssymb}

\usepackage{tikz}
\usetikzlibrary{matrix,positioning,decorations.pathreplacing}

\usepackage{color}

\usepackage{enumitem}

\usepackage{ifthen}

\newenvironment{proof}{{\bf Proof.}}{\hfill \hspace*{1pt}\hfill $\Box$}

\newtheorem{Ass}{Assumption}

\newtheorem{theorem}{Theorem}[section]
\newtheorem{lemma}[theorem]{Lemma}
\newtheorem{proposition}[theorem]{Proposition}
\newtheorem{corollary}[theorem]{Corollary}

\newtheorem{definition}[theorem]{Definition}

\newtheorem{remark}[theorem]{Remark}

\newcounter{syscounter}

\usepackage{csquotes}  
\newcommand\q{\enquote}

\newcommand \N   {\mathbb{N}}
\newcommand \R   {\mathbb{R}}

\newcommand \K   {\mathcal{K}}
\newcommand \Kinf{\mathcal{K_\infty}}
\newcommand \PD  {\mathcal{P}}
\newcommand \KL  {\mathcal{KL}}
\newcommand \LL  {\mathcal{L}}

\newcommand{\Uc}{\ensuremath{\mathcal{U}^m}}
\newcommand{\X}{\ensuremath{\mathcal{X}^n}}

\newcommand{\muval}{\mathfrak u}

\newcommand \qrq   {\quad\Rightarrow\quad}

\newcommand \Iff   {\Leftrightarrow}

\newcommand \eps {\varepsilon}

\newcommand{\sumform}{dissipative form}
\newcommand{\Sumform}{Dissipative form}

\newif\ifAndo

\Andotrue

\title{\LARGE 
A superposition approach for the ISS Lyapunov-Krasovskii theorem\\ with pointwise dissipation}

\tnotetext[mytitlenote]{This work has been supported by BayFrance, project  FK-20-2022. 
A. Mironchenko has been supported by the Heisenberg program of the German Research Foundation (DFG), grant MI\,1886/3-1.}
 
\author[bayreuthaddress]{Andrii~Mironchenko\corref{cor1}}
\address[bayreuthaddress]{Department of Mathematics, University of Bayreuth, Germany (email: andrii.mironchenko@uni-bayreuth.de)}

\cortext[cor1]{Corresponding author.}

\author[passauaddress]{Fabian Wirth}
\address[passauaddress]{Faculty of Computer Science and Mathematics, University of Passau, Germany (email: firstname.lastname@uni-passau.de)}

\author[psuaddress]{Antoine Chaillet}

\author[psuaddress]{Lucas Brivadis}

\address[psuaddress]{Universit\'e Paris-Saclay, CNRS, CentraleSup\'elec, Laboratoire des signaux et syst\`emes, 91190, Gif-sur-Yvette, France (email: firstname.lastname@centralesupelec.fr)}

\begin{document}

\begin{abstract}
We show that the existence of a Lyapunov-Krasovskii functional (LKF) with pointwise dissipation 
(i.e. dissipation in terms of the current solution norm)  
suffices for ISS, provided that uniform global stability can also be ensured using the same LKF. To this end, we develop a stability theory, in which the behavior of solutions is not assessed through the classical norm but rather through a specific LKF, which may provide significantly tighter estimates. 
We discuss the advantages of our approach by means of an example.
\end{abstract}

\begin{keyword}
Nonlinear control systems, input-to-state stability, time-delay systems, infinite-dimensional systems.
\end{keyword}

\maketitle

\section{Introduction}
\label{sec:introduction}

Input-to-state stability (ISS), introduced by E.D. Sontag in the late 1980s \cite{Son89}, has become a central tool in the analysis and control of nonlinear dynamical systems \cite{Son08,Mir23}. Originally defined in the context of ordinary differential equations, it has been extended more recently to infinite-dimensional systems \cite{MiP20,KaK19}, including time-delay systems \cite{CKP23}.

For time-delay systems, ISS can be established by means of Lyapunov-Krasovskii functionals (LKFs) \cite{PeJ06}: ISS holds if the LKF dissipates along the system's solutions, modulo a positive term involving the input norm. So far, the only general conditions to ensure ISS based on LKF impose that the dissipation can be expressed in terms of the LKF itself (which is also a necessary requirement for ISS \cite{KPJ08}).

It has been conjectured in \cite{CPM17} that a pointwise dissipation, involving merely the norm of current value of the solution, is enough to guarantee ISS. While this conjecture has been proved for specific classes of systems \cite{CPM17,chaillet2023growth, loko2024growth} and for the weaker notion of integral ISS \cite{CGP21}, it has not yet been proved or disproved in its full generality. It is worth mentioning that, in \cite{KLW17}, the authors employed an ISS LKF in the so-called ``implication form'' and showed that if a pointwise dissipation holds whenever the LKF dominates the input magnitude then ISS can be concluded. Still, this condition remains significantly more conservative than the existence of an ISS LKF with pointwise dissipation.

Solving this conjecture would be interesting not only for the sake of mathematical curiosity, but also for more practical considerations, as a pointwise dissipation is usually easier to obtain than an estimate involving the LKF. It would also unify the theory with that of input-free systems, since it has been known for a long time that, in the absence of inputs, a pointwise dissipation is enough to conclude global asymptotic stability \cite{KRA59}.

Despite significant efforts on this question, it is not even known whether the conjecture is true if we additionally assume that solutions are globally uniformly bounded, or even if the origin is uniformly globally stable (UGS). In this paper, we partially solve this question by showing that, if the LKF that dissipates pointwisely can also be used to establish UGS, then the ISS property holds. 
For input-free systems, our results recover and, in fact, strengthen the Krasovskii theorem for asymptotic stability \cite{KRA59}. 

Interestingly, our main result establishes a stronger property, which we call $V$-ISS. This property is similar to ISS, but measures the behavior of the system's solutions through a given LKF $V$ rather than through the classical $\sup$ norm of the state. We believe this notion may be of interest on its own as, depending on the considered LKF, it may provide a tighter estimate of the solutions' behavior. 
In particular, we prove a $V$-ISS superposition theorem (a characterization of $V$-ISS in terms of respective attractivity and stability notions, see \cite{SoW96, MiW18b})  
and use it for the derivation of Lyapunov-Krasovskii sufficient conditions.

The paper is organized as follows. In Section
\ref{sec:mainresult}, we recall some basics about time-delay systems, introduce the $V$-ISS concept, and state our main result. In Section \ref{sec:discussion}, we compare this result with the other approaches existing in the literature to ensure ISS based on a pointwise dissipation \cite{KLW17, loko2024growth, orlowski2022adaptive}. We also provide an academic example illustrating the added value of our approach.  
In Section \ref{sec:Vstability}, we adapt some other classical stability concepts, in the same spirit as $V$-ISS, and state a superposition principle for $V$-ISS. We also provide some LKF-based conditions to establish these properties, and give some technical observations that are needed for the proof of the main result in Section \ref{sec:proof}. In Section~\ref{sec:Appendix: ISS superposition theorems}, a detailed proof of the $V$-ISS superposition theorem is presented.
In Section~\ref{sec:ISS superposition result via mixed stability notions}, we prove sufficient conditions for ISS in terms of mixed stability notions, which may be instrumental for the further study of Lyapunov-Krasovskii functionals with pointwise dissipation. 

A conference version of this paper where the main results have been stated without detailed proofs, examples, and discussion, has appeared in \cite{MWC24b}.

\vspace{2mm}
\emph{Notation.} By $\R_+:= [0,\infty)$ we denote the set of nonnegative real numbers. For $x\in\mathbb R^n$, $|x|$ denotes its Euclidean norm and $|A|$ denotes the corresponding induced matrix norm of $A\in \R^{n\times n}$. Given intervals $\mathcal I, \mathcal Q, \mathcal J\subset\mathbb R$, $C(\mathcal I,\mathcal J)$ denotes the set of continuous functions from $\mathcal I$ to $\mathcal J$, and $C(\mathcal I \times \mathcal Q,\mathcal J)$ the set of continuous maps from $\mathcal I \times \mathcal Q$ to $\mathcal J$. ``For all $t\in\mathcal I$ $a.e.$'' means for all $t\in \mathcal I$,  except possibly on a set of zero Lebesgue measure. 
Given $\theta>0$, $\mathcal X:=C([-\theta,0],\mathbb R)$.  $\mathcal U$ denotes the space $L^\infty_{loc}(\R_+,\R)$ of all signals $u: \R_+\to\mathbb R$ that are Lebesgue measurable and locally essentially bounded. 
Given an interval $\mathcal I\subset \R_+$ and a locally essentially bounded signal $u:\mathcal I\to\mathbb R^m$, $\|u\|:=\textrm{ess\,sup}_{t\in\mathcal I} |u(t)|$. Given $u\in\mathcal U^m$, $u_{\mathcal I}:\mathcal I\to\mathbb R^m$ denotes its restriction to the interval $\mathcal I$, in particular $\|u_{\mathcal I}\|=\textrm{ess\,sup}_{t\in\mathcal I} |u(t)|$. 
Given $T\in \R_+\cup\{+\infty\}$, $\theta>0$, $x\in C([-\theta,T),\mathbb R^n)$ and $t\in [0,T)$, the function $x_t\in\mathcal X^n$ is the history function defined as $x_t(\tau):=x(t+\tau)$ for $\tau\in[-\theta,0]$. We also use the standard classes of comparison functions:

\begin{align*}
    \K &:= \{\gamma\in C(\R_+,\R_+) \,|\, \gamma(0) = 0,\ \gamma \text{ is strictly increasing}\}, \\
    \K_\infty &:= \{\gamma\in \K \,|\, \gamma \text{ is unbounded}\}, \\
    \LL &:= \{\gamma\in C(\R_+,\R_+) \,|\, \gamma \text{ is strictly decreasing, }
    \\
    &\hspace{6.1cm}\lim_{t \to \infty}\gamma(t) = 0\}, \\
    \K\LL &:= \{\beta \in C(\R_+ \times \R_+,\R_+) \,|\, \beta(\cdot,t) \in \K \ \forall t \geq 0, \\
    &\hspace{5.5cm}\beta(r, \cdot) \in \LL\ \forall r > 0\}.
\end{align*}

\section{Preliminaries and main result}\label{sec:mainresult}

\subsection{Time-delay systems}

We consider retarded differential equations of the form
\begin{align}
\dot{x}(t)=f(x_t,u(t)),
\label{eq:time-delay}
\end{align}
where $x_t\in \mathcal X^n$, $n\in\N$, denotes the history function defined as $x_t(s):=x(t+s)$ for all $s\in[-\theta,0]$, 
 and $\theta>0$ is the fixed maximal time-delay involved in the dynamics. The input $u$ is assumed to be in $\mathcal U^m$, $m\in\N$. Our standing assumption on the vector field $f$ is the following.
\begin{Ass}
\label{ass:Regularity-of-f} 
The vector field $f:\X \times \R^m \to \R^n$ is
\begin{enumerate}[label=(\roman*)]
	\item Lipschitz
          continuous in its first argument on bounded
subsets of ${\mathcal X^n}\times \R^m$, i.e.,  for
all $C>0$, there exists $L_f(C)>0$, such that for all $\phi,\varphi \in \mathcal X^n$ with $\|\phi\|\leq C$ and $\|\varphi\|\leq C$ and all $v\in\R^m$ with $|v|\leq C$,
\begin{align}
|f(\phi,v)-f(\varphi,v)| \leq L_f(C) \|\phi-\varphi\|;
\label{eq:Lipschitz}
\end{align}
	\item jointly continuous in its arguments.
\end{enumerate}
\end{Ass}

Our aim in this paper is to analyze the stability of the equilibria. For definiteness, we assume that $0$ is an equilibrium of the undisturbed system, that is: 
\begin{Ass}
\label{ass:0-is-an-equilibrium} 
Throughout $f(0,0)=0$.
\end{Ass}

By \cite[Theorem 2]{CKP23}, Assumption~\ref{ass:Regularity-of-f} guarantees that, for any initial condition $x_0 \in \X$ and any input $u\in\Uc$, there is a unique maximal solution (in the Carath{\'e}odory sense) of \eqref{eq:time-delay}, which we denote by $x(\cdot,x_0,u)$. Given $t$ in the domain of existence of this solution, the corresponding history function is denoted by $x_t(x_0,u)\in\mathcal X^n$. 
The triple $({\mathcal X^n},\Uc,\varphi)$, where $\varphi$ is the flow mapping $(x_0,u)$ and $t$ in the maximal interval of existence to $\varphi(t,x_0,u) := x_t(x_0,u)$ defines an abstract control system in the sense of \cite{MiW18b}. 
In view of Assumption~\ref{ass:Regularity-of-f}, the system \eqref{eq:time-delay} satisfies the boundedness-implies-continuation (BIC) property, i.e., every maximal solution that is bounded on its whole domain of existence is defined on $\R_+$, see \cite[Theorem 2]{CKP23}.

\subsection{Definitions}

In this section, we introduce a variant of ISS,  specifically tailored to the analysis of delay systems using Lyapunov-Krasovskii functionals. To this aim, we first recall the following definition \cite{CKP23}.

\vspace{2mm}
\begin{definition}[LKF candidate]
\label{def:Sandwiched-map}
A map $V\in C({\mathcal X^n},\R_+)$ is called a \emph{Lyapunov-Krasovskii functional candidate (LKF candidate)}, if there are $\psi_1, \psi_2 \in\Kinf$ so that the following sandwich bounds hold:
\begin{equation}
\label{eq:Sandwich}
\psi_1(|\phi(0)|) \leq V(\phi) \leq \psi_2(\|\phi\|) \quad \forall \phi \in\X.
\end{equation} 
It is said to be \emph{coercive} if, for $\psi_1$, $\psi_2$ as above, the following stronger condition is satisfied:
\begin{align*}
\psi_1(\|\phi\|) \leq V(\phi) \leq \psi_2(\|\phi\|) \quad \forall \phi \in\X.
\end{align*} 
\end{definition}
\vspace{2mm}

\begin{remark}
    We note that in \cite{CKP23}, it is required that LKF candidates are also Lipschitz continuous on bounded sets of $\mathcal {X}^n$. 
    For our results, however, we do not need this extra assumption.
\end{remark}

Next, we revisit the notion of ISS by estimating the system's properties through an associated LKF candidate, rather than through the standard norms.

\vspace{2mm}
\begin{definition}[$V$-ISS / ISS]
Given an LKF candidate $V:\X\to\R_+$, the system \eqref{eq:time-delay} is called
\emph{$V$-input-to-state stable ($V$-ISS)} if there exist $\beta \in \KL$ and $\gamma \in \K_\infty$ 
such that, for all $ x_0 \in {\mathcal X^n}$ and all $ u\in \Uc$,
\begin{equation}
\label{eq:V-iss_sum}
V(x_t(x_0,u) ) \leq \beta( V(x_0),t) + \gamma( \|u\|),\quad \forall t\geq 0.
\end{equation}
System \eqref{eq:time-delay} is called \emph{input-to-state stable (ISS)}, if it is $V$-ISS for $V(\phi) = \|\phi\|$.
\end{definition}

\begin{remark}
In the above definition, we tacitly assume that all solutions are well-defined on $\R_+$. Similar conventions are taken for other definitions in this work. If \eqref{eq:V-iss_sum} were to hold only on the domain of definition of $x(\cdot,x_0,u)$, then \eqref{eq:V-iss_sum} would guarantee that the solution in bounded on this interval, and the BIC property of \eqref{eq:time-delay} guaranteed by our regularity Assumption~\ref{ass:Regularity-of-f} would imply that the domain of definition is, in fact, $\R_+$, and thus \eqref{eq:V-iss_sum} is valid on $\R_+$.
\end{remark}

It is worth noting that $V$-ISS may provide a tighter estimate on the solutions' norm. For instance, consider the following widely-used class of quadratic LKF candidates
\begin{align*}
    V(\phi):=\phi(0)^\top P \phi(0)+\int_{-\theta}^0 \phi(\tau)^\top Q \phi(\tau)d\tau,
\end{align*}
where $P,Q\in\R^{n\times n}$ denote symmetric, positive definite matrices. For such LKF candidates, $V$-ISS ensures an upper bound on the solution's norm in terms of $V(x_0)$, and, since $P,Q$ are positive definite, in terms of $|x_0(0)|^2+\int_{-\theta}^0|x_0(\tau)|^2d\tau$, whereas the classical ISS would upper-bound them in terms of $\|x_0\|$, which may be significantly larger for some particular initial states. 

The following statement clarifies the properties induced by $V$-ISS and relates it to the classical definition of ISS.

\vspace{2mm}
\begin{proposition}[$V$-ISS $\Rightarrow$ ISS]
\label{prop:V-ISS_and_ISS} 
Given an LKF candidate $V:\X\to\R_+$, consider the following statements:
\begin{enumerate}[label=\roman*)]
	\item \label{itm:V-ISS_and_ISS-1} System \eqref{eq:time-delay} is $V$-ISS.

	\item \label{itm:V-ISS_and_ISS-2} There exist $\beta \in\KL$ and $\gamma \in\Kinf$ such that, for all $x_0 \in {\mathcal X^n}$ and all $u\in\Uc$, the flow of \eqref{eq:time-delay} satisfies
\begin {equation}
\label{eq:v-norm-iss_sum}
V(x_t(x_0,u)) \leq \beta(\|x_0\|,t) + \gamma( \|u\|),\quad \forall t\geq 0.
\end{equation}
		
	\item \label{itm:V-ISS_and_ISS-3} There exist $\beta \in\KL$ and $\gamma \in\Kinf$ such that, for all $x_0 \in {\mathcal X^n}$ and all $u\in\Uc$, the flow of \eqref{eq:time-delay} satisfies
\begin{align}
\label{Gleichung_Verzoeg_ISS}
|x(t,x_0,u)|\leq \beta(\|x_0\|,t) + \gamma(\left\|u\right\|),\quad \forall t\geq 0.
\end{align}
	
	\item \label{itm:V-ISS_and_ISS-4} System \eqref{eq:time-delay} is ISS.
\end{enumerate}
Then the following relations hold:
\begin{center}
	i) $\Rightarrow$ ii) \ $\Iff$ \ iii) \ $\Iff$ \ iv).
\end{center}
If $V$ is coercive, then all four statements are equivalent.
\end{proposition}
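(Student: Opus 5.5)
I will prove the chain i) $\Rightarrow$ ii) $\Rightarrow$ iii) $\Rightarrow$ iv) $\Rightarrow$ ii), and then treat the coercive case separately.

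\emph{i) $\Rightarrow$ ii).} By the upper sandwich bound, $V(x_0)\le\psi_2(\|x_0\|)$. Since $\beta(\cdot,t)$ is increasing, $\beta(V(x_0),t)\le\beta(\psi_2(\|x_0\|),t)$. The function $\tilde\beta(r,t):=\beta(\psi_2(r),t)$ is again of class $\KL$, so \eqref{eq:v-norm-iss_sum} holds with $\tilde\beta$ and the same $\gamma$.

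\emph{ii) $\Rightarrow$ iii).} The lower sandwich bound gives $|x(t)|=|x_t(0)|\le\psi_1^{-1}(V(x_t))$. Using the weak triangle inequality $\psi_1^{-1}(a+b)\le\psi_1^{-1}(2a)+\psi_1^{-1}(2b)$, we get \eqref{Gleichung_Verzoeg_ISS} with $\psi_1^{-1}(2\beta(\cdot,\cdot))\in\KL$ and $\psi_1^{-1}\circ 2\gamma\in\Kinf$.

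\emph{iii) $\Rightarrow$ iv).} This is the step needing the most care, because we must pass from the pointwise value $|x(t)|$ to the history norm $\|x_t\|=\sup_{s\in[t-\theta,t]}|x(s)|$. For $s\in[t-\theta,t]$ there are two cases:
\begin{itemize}
\item if $s\ge0$, then $|x(s)|\le\beta(\|x_0\|,s)+\gamma(\|u\|)\le\beta(\|x_0\|,\max(t-\theta,0))+\gamma(\|u\|)$;
\item if $s<0$, which can only happen when $t<\theta$, then $|x(s)|\le\|x_0\|$.
\end{itemize}
So I define
\[
\hat\beta(r,t):=\beta(r,\max(t-\theta,0))+\mu(r,t),
\]
where $\mu$ is a $\KL$ function with $\mu(r,t)\ge r$ for $t\le\theta$. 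A concrete choice is $\mu(r,t)=r\,e^{\theta-t}$. Then $\hat\beta\in\KL$: continuity holds, the map is strictly decreasing in $t$ thanks to the $\mu$ term, and it tends to $0$. This gives $\|x_t\|\le\hat\beta(\|x_0\|,t)+\gamma(\|u\|)$, which is ISS.

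\emph{iv) $\Rightarrow$ ii).} Apply $\psi_2$ to the ISS estimate: $V(x_t)\le\psi_2(\|x_t\|)\le\psi_2(\beta(\|x_0\|,t)+\gamma(\|u\|))$. Splitting again via $\psi_2(a+b)\le\psi_2(2a)+\psi_2(2b)$ yields \eqref{eq:v-norm-iss_sum}.

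\emph{Coercive case.} It remains to show ii) $\Rightarrow$ i). Coercivity gives $\|x_0\|\le\psi_1^{-1}(V(x_0))$. Hence $\beta(\|x_0\|,t)\le\beta(\psi_1^{-1}(V(x_0)),t)$, and $\beta(\psi_1^{-1}(\cdot),\cdot)$ is of class $\KL$, which is exactly \eqref{eq:V-iss_sum}.

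Global existence of solutions is implicit in each estimate, as noted in the remark after the definition of $V$-ISS (BIC property). The only genuinely delicate point in the whole argument is constructing a valid $\KL$ bound over the initial window $[0,\theta]$ in iii) $\Rightarrow$ iv).
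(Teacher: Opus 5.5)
Your proof is correct and follows essentially the same route as the paper. You use the sandwich bounds for i)$\Rightarrow$ii)$\Rightarrow$iii) and iv)$\Rightarrow$ii), and a case split over the initial delay window $[t-\theta,t]$ for iii)$\Rightarrow$iv). The only minor differences are that your term $r e^{\theta-t}$ dominates $\|x_0\|$ directly (the paper instead uses $\beta(r,0)\ge r$, read off from iii), plus the correction term $(\theta-t)r$), and that you explicitly verify ii)$\Rightarrow$i) under coercivity, which the paper leaves implicit.
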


\begin{proof}
i) $\Rightarrow$ ii) $\Rightarrow$ iii) follows easily from the fact that by definition $V$ satisfies a sandwich condition as in \eqref{eq:Sandwich}. To see iii) $\Rightarrow$ iv), note that from \eqref{Gleichung_Verzoeg_ISS} we can get \eqref{eq:V-iss_sum} with $V(\cdot) = \|\cdot \|$, by using $\tilde{\beta}(r,t) := \beta(r,t-\theta)$, $r\geq 0, t\geq \theta$ and extending this to a $\mathcal{KL}$-function that is sufficiently large on $\R_+ \times [0,\theta]$. The fact that iv) $\Rightarrow$ ii) is also straightforward by noticing that by \eqref{eq:Sandwich} we have $V(\phi) \leq \psi_2(\|\phi\|)$ for a certain $\psi_2\in\Kinf$ and for all $\phi \in \X$. It remains to prove that iii) $\Rightarrow$ iv). To this end, notice that iii) guarantees that, for all $t\geq \theta$,
\begin{align}
	\|x_t(x_0,u)\| &= \max_{\tau \in [-\theta,0]}|x(t+\tau,x_0,u)| \nonumber\\
	&\leq \max_{\tau \in [-\theta,0]} \beta(\|x_0\|,t + \tau) +\gamma(\left\|u\right\|)\nonumber\\
&=\beta(\|x_0\|,t-\theta)+\gamma(\left\|u\right\|). \label{eq-6}
\end{align}
On the other hand, for $t \in [0,\theta]$, it holds that 
\begin{align*}
\|x_t(x_0,u)\| &= \max\left\{ \max_{s\in[t-\theta,0] }|x(s,x_0,u)|, \max_{ s\in [0,t]}|x(s,x_0,u)|\right\} \\
&\leq \max\left\{ \|x_0\|, \beta(\|x_0\|,0) + \gamma(\left\|u\right\|) \right\}.
\end{align*}
Since $\beta(r,0)\geq r$ for all $r\geq 0$ (as can be seen from \eqref{Gleichung_Verzoeg_ISS} with $u \equiv 0$), we obtain that
\begin{align}\label{eq-7}
\|x_t(x_0,u)\| \leq \beta(\|x_0\|,0) + \gamma(\left\|u\right\|),\quad \forall t \in [0,\theta].
\end{align}
Combining \eqref{eq-6} and \eqref{eq-7}, we conclude that
\begin{align*}
    \|x_t(x_0,u)\| \leq \tilde\beta(\|x_0\|,t)+\gamma(\left\|u\right\|),\quad \forall t\geq 0,
\end{align*}
where $\tilde\beta$ is the function defined for all $r,t\geq 0$ as 
\[
\tilde{\beta}(r,t)=
\left\{ 
\begin{array}{ll}
(\theta -t)r + \beta(r,0),&\textrm { if }t \in [0, \theta],\\
\beta(r,t - \theta),&\textrm{ if } t>\theta. 
\end{array} \right.
\]
The conclusion follows by noticing that $\tilde \beta\in\KL$.
\end{proof}
\vspace{2mm}

The equivalence of the items \ref{itm:V-ISS_and_ISS-3} and \ref{itm:V-ISS_and_ISS-4} can be found, e.g., in \cite[Proposition 1.4.2]{Mir12}.  We mention it here merely for the sake of completeness.

To analyze the $V$-ISS property using ISS Lyapunov-Krasovskii functionals, we use the following notions of ISS LKF. They all rely on the \emph{upper right-hand Dini derivative} of the map $V$ along the solutions of system \eqref{eq:time-delay}, defined for all $\phi\in\X$ and $u\in\mathcal U^m$ as
\begin{equation}
\label{eq:Dinidef}
  \dot{V}_u(\phi) := \limsup_{h\to 0^+}\frac{V(x_h(\phi,u))-V(\phi)}{h}.  
\end{equation}

\vspace{2mm}
\begin{definition}[pointwise/LKF-wise ISS LKF]
\label{def:ISS_LKF-implication}
Consider system \eqref{eq:time-delay}. For this system an LKF $V:{\mathcal X^n} \to \R_+$ is called:
\begin{enumerate}[label=(\roman*)]
\item\label{itm:lkf1}  an \emph{ISS LKF in implication form with LKF-wise dissipation} if there exist $\chi \in \Kinf$ and $\alpha \in\PD$ 
such that, for all $\phi \in {\mathcal X^n}$ and all $u\in\Uc$,
\begin{equation}
\label{eq:ISS-LK-implication-V}
V(\phi)\geq \chi(\|u\|) \qrq \dot{V}_u(\phi) \leq -\alpha(V(\phi)).
\end{equation}

\item\label{itm:lkf2} an \emph{ISS LKF in implication form with pointwise dissipation} if there exist $\chi \in \Kinf$ and $\alpha \in\PD$ such that, for all $\phi \in {\mathcal X^n}$ and all $u\in\Uc$,
\begin{equation}
\label{eq:ISS-LK-implication-relaxed}
|\phi(0)| \geq \chi(\|u\|) \qrq \dot{V}_u(\phi) \leq -\alpha(|\phi(0)|).
\end{equation}
\item an \emph{ISS LKF  in \sumform{} with LKF-wise dissipation} if there exist $\alpha,\chi \in \Kinf$ such that, for all $\phi \in {\mathcal X^n}$ and all $u\in\Uc$,
\begin{equation}
\label{eq:ISS-LK-sum-LKFwise}
\dot{V}_u(\phi) \leq -\alpha(V(\phi))+\chi(\|u\|).
\end{equation}

\item  an \emph{ISS LKF  in \sumform{} with pointwise dissipation} if there exist $\alpha,\chi \in \Kinf$ such that, for all $\phi \in {\mathcal X^n}$ and all $u\in\Uc$,
\begin{equation}
\label{eq:ISS-LK-sum-relaxed}
\dot{V}_u(\phi) \leq -\alpha(|\phi(0)|)+\chi(\|u\|).
\end{equation}
\end{enumerate}
In all cases the function $\chi$ is called the gain related to $V$.
\end{definition}
\vspace{2mm}

\begin{remark}
\label{rem:DinivsDriver}
    In Definition~\ref{def:Sandwiched-map} we imposed little regularity on $V$ -- in part because for many arguments only continuity is required. On the other hand this makes the verification of the conditions appearing in Definition~\ref{def:ISS_LKF-implication} tedious, because the expression in \eqref{eq:Dinidef} requires evaluation for all $u\in \Uc$. Frequently, it is assumed that $V$ is locally Lipschitz continuous, see e.g. \cite{CKP23}. In this case, the Driver derivative \cite{driver1962existence,CKP23} is useful. Given $\phi \in \X$, $\muval \in \R^m$, define pseudotrajectories for $h\in[0,\theta)$ by
    \begin{equation}
    \phi_{h,\muval}(\tau) := \left\{ \begin{matrix}
        \phi(\tau + h) & \tau \in [-\theta,- h], \\
        \phi(0) + \tau f(\phi,\muval) & \tau \in [-h,0].
    \end{matrix}\right.
\end{equation}
The Driver derivative of $V$ at $\phi$ in direction $f(\phi,\muval)$ is then
\begin{equation}
\label{eq:Driverdef}
    D^+V(\phi,\muval) := \limsup_{h\to 0^+} \frac{1}{h}\left( V(\phi_{h,\muval}) - V(\phi) \right).
\end{equation}
If $V$ is locally Lipschitz continuous, then for any input $u\in \Uc$ the Dini derivative and the pointwise Driver derivative coincide almost everywhere along a trajectory, \cite[Theorem 2]{Pepe07-correct}. In addition, by \cite{pepe2007problem} it is sufficient to have decay estimates along the absolutely continuous solutions $t\mapsto x_t(\phi,u)$ corresponding to initial functions $\phi \in C^1([-\theta,0],\R^n)$. With these tools it is not hard to see that it is sufficient to formulate the conditions of Definition~\ref{def:ISS_LKF-implication} in terms of Driver derivatives, e.g. \eqref{eq:ISS-LK-implication-V} can be expressed as
\begin{equation}
\label{eq:KLW-condition}
  V(\phi)\geq \chi(|\muval|) \qrq D^+{V}(\phi,\muval) \leq -\alpha(V(\phi)),
  \tag{\ref{eq:ISS-LK-implication-V}'}
\end{equation}
and similarly for \eqref{eq:ISS-LK-implication-relaxed} -- \eqref{eq:ISS-LK-sum-relaxed}.
\end{remark}

In the dissipative form of the ISS LKF, we have to require the decay rate $\alpha$ to be a $\Kinf$-function.

At the same time, for ISS LKFs in implication form, it suffices that $\alpha$ is merely a $\PD$-function. As the following proposition shows (motivated by \cite[Remark 4.1]{LSW96}, \cite[Proposition 2.17]{Mir23}), by a suitable scaling of an ISS Lyapunov functional (in implication form), we can always obtain a $\Kinf$ decay rate. 

Recall that a \emph{nonlinear scaling} is a function $\mu\in\Kinf$, continuously differentiable on $(0,+\infty)$, satisfying  $\mu'(s)>0$ for all $s>0$ and such that $\lim_{s\to 0}\mu'(s)$ exists, and is finite (that is, it belongs to $\R_+$).

\begin{proposition}
\label{prop:Kinf-decay-rate-always-suffices-pointwise-dissipation} 
Let $V \in C({\mathcal X^n},\R_+)$ be an ISS LKF in implication form with pointwise dissipation with a decay rate $\alpha\in\PD$. Then there is a nonlinear scaling $\xi \in\Kinf$, which belongs to $C^1(\R_+,\R_+)$, is infinitely differentiable on $(0,+\infty)$, and such that $\xi\circ V(\cdot)$ is an ISS LKF in implication form with pointwise dissipation with a decay rate belonging to $\Kinf$.
\end{proposition}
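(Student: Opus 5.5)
The plan follows the scaling argument of \cite[Remark 4.1]{LSW96} and \cite[Proposition 2.17]{Mir23}, adapted to pointwise dissipation. Let $W:=\xi\circ V$, with $\xi$ to be chosen. The sandwich bounds \eqref{eq:Sandwich} for $W$ hold with $\xi\circ\psi_1$ and $\xi\circ\psi_2$, so $W$ is again an LKF candidate. For the Dini derivative, $\xi$ is $C^1$ with $\xi'\ge 0$, and $V$ is continuous along solutions (since $h\mapsto x_h$ is continuous and $V$ is continuous). Hence the chain rule for upper Dini derivatives under a $C^1$ nondecreasing outer map gives $\dot W_u(\phi) = \xi'(V(\phi))\,\dot V_u(\phi)$ whenever $\dot V_u(\phi)\le 0$, and more generally $\dot W_u(\phi)\le \xi'(V(\phi))\dot V_u(\phi)$ when $\dot V_u(\phi)\le 0$. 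Therefore, whenever $|\phi(0)|\ge\chi(\|u\|)$,
\[
\dot W_u(\phi)\le -\xi'(V(\phi))\,\alpha(|\phi(0)|).
\]
We keep the same gain $\chi$, and it remains to bound $\xi'(V(\phi))\alpha(|\phi(0)|)$ from below by some $\tilde\alpha(|\phi(0)|)$ with $\tilde\alpha\in\Kinf$.

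The main obstacle is that $V(\phi)$ is not controlled by $|\phi(0)|$ from above, because $V(\phi)\le\psi_2(\|\phi\|)$ only. A large $V(\phi)$ can therefore coexist with a small $|\phi(0)|$, so we cannot simply take $\xi'$ large at large arguments as in the ODE case. We do have the lower bound $V(\phi)\ge\psi_1(|\phi(0)|)$. So we choose $\xi'$ nondecreasing, which gives $\xi'(V(\phi))\ge \xi'(\psi_1(|\phi(0)|))$, and the decay rate becomes
\[
\tilde\alpha(r):=\xi'(\psi_1(r))\,\alpha(r).
\]
We need $\tilde\alpha$ to dominate a $\Kinf$ function. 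Near zero this is impossible if $\xi'(0)=0$, and if $\xi'(0)>0$ then $\tilde\alpha$ behaves like $\alpha$ near zero, which is only positive definite. This is fine, however: a $\PD$ function that grows to infinity at infinity can be bounded from below by a $\Kinf$ function, namely $\tilde\alpha\ge\hat\alpha\in\Kinf$ with $\hat\alpha(r)\le \inf_{s\ge r}\tilde\alpha(s)$ suitably adjusted to be strictly increasing. So it suffices to ensure $\liminf_{r\to\infty}\tilde\alpha(r)=\infty$ and, more precisely, $\inf_{s\ge r}\tilde\alpha(s)\to\infty$ as $r\to\infty$, together with positivity for $r>0$.

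To build $\xi$, define $\underline\alpha(r):=\min\{\min_{s\in[r,\,r+1]}\alpha(s),\,\dots\}$, or more simply a continuous positive function $\rho$ on $(0,\infty)$ with $\rho(r)\le\alpha(s)$ for all $s\ge r$ on compact pieces. Then pick a smooth nondecreasing $g:\R_+\to(0,\infty)$ with $g(0)>0$ finite and
\[
g(\psi_1(r))\ge \frac{r}{\min_{s\in[r/2,\,r]}\alpha(s)} \quad\text{for } r\ge 1,
\]
which is possible because the right-hand side is locally bounded. Smoothing is done by upper-bounding the step function of local maxima by an increasing $C^\infty$ function. Set $\xi(s):=\int_0^s g(\tau)\,d\tau$. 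Then $\xi\in\Kinf\cap C^1$, it is $C^\infty$ on $(0,\infty)$, it satisfies $\xi'=g>0$ with $\lim_{s\to0}\xi'(s)=g(0)$ finite, and so it is a nonlinear scaling. Finally, $\tilde\alpha(r)=g(\psi_1(r))\alpha(r)\ge r$ for $r\ge1$, and $\tilde\alpha(r)\ge g(0)\alpha(r)>0$ for $r>0$. Hence $\tilde\alpha$ is positive definite with $\inf_{s\ge r}\tilde\alpha(s)\ge r$ for large $r$, and it admits a $\Kinf$ minorant. This concludes that $\xi\circ V$ is an ISS LKF in implication form with pointwise dissipation with a $\Kinf$ decay rate.
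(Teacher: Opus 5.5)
Your proof is correct and follows the same idea as the paper. You take $\xi'$ nondecreasing and use the lower sandwich bound $V(\phi)\ge\psi_1(|\phi(0)|)$ to get $\xi'(V(\phi))\alpha(|\phi(0)|)\ge\xi'(\psi_1(|\phi(0)|))\alpha(|\phi(0)|)$, with $\xi'\circ\psi_1$ growing fast enough to offset the decay of $\alpha$ at infinity. The only difference is packaging: the paper invokes the factorization $\alpha\ge\omega\sigma$ with $\omega\in\Kinf$, $\sigma\in\LL$ and sets $\xi'=1/(\sigma\circ\psi_1^{-1})$, which gives the $\Kinf$ rate $\omega$ directly, whereas you build $\xi'=g$ by hand and then take a $\Kinf$ minorant of $g(\psi_1(\cdot))\alpha(\cdot)$.
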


\begin{proof}
Let $V \in C({\mathcal X^n},\R_+)$ be an ISS LKF in implication form with pointwise dissipation with the sandwich bounds $\psi_1,\psi_2\in\Kinf$, a decay rate $\alpha\in\PD$ and a Lyapunov gain $\chi\in\Kinf$.
According to \cite[Proposition A.13]{Mir23}, there are $\omega \in\Kinf$ and $\sigma \in \LL$ such that 
\begin{eqnarray*}
\alpha(r) \geq \omega(r)\sigma(r),\quad r\geq 0.
\end{eqnarray*}
Without loss of generality, we can assume that $\sigma$ and $\psi_1$ are infinitely differentiable on $(0,+\infty)$ (otherwise take $\frac{1}{2}\sigma$, $\frac{1}{2}\psi_1$ and smooth them).

From \eqref{eq:ISS-LK-implication-relaxed}, it follows that for all  $\phi \in {\mathcal X^n}$ and $u \in \Uc$ 
\begin{equation*}
\label{eq:ISS-LK-implication-relaxed-1}
|\phi(0)| \geq \chi(\|u\|) \qrq \dot{V}_u(\phi) \leq -\omega(|\phi(0)|)\sigma(|\phi(0)|).
\end{equation*}
As $\sigma \in \LL$, $-\sigma$ is an increasing function, and by the sandwich inequality 
\eqref{eq:Sandwich}, we have that
\begin{equation*}
|\phi(0)| \geq \chi(\|u\|) \qrq \dot{V}_u(\phi) \leq -\omega(|\phi(0)|)\sigma \circ \psi_1^{-1}(V(\phi)).
\end{equation*}
Thus, for all $\phi \in{\mathcal X^n}$ and all $u \in \Uc$, we have
\begin{align}
|\phi(0)| \geq \chi(\|u\|) \qrq \frac{1}{\sigma\circ \psi_1^{-1}(V(\phi))}\dot{V}_u(\phi) \leq -\omega(|\phi(0)|).
\label{ImplicationIneq_cont-aux2}
\end{align}
Define $\xi \in C^1(\R_+,\R_+)$ by 
\begin{eqnarray}
\xi(r):=\int_0^r\frac{1}{\sigma\circ \psi_1^{-1}(s)}ds, \quad r \geq 0.
\label{eq:scaling-LF-def}
\end{eqnarray}
As $\sigma$ is infinitely differentiable and never zero, $\xi$ is well-defined on $\R_+$ and infinitely differentiable. As $\sigma$ can also be chosen small enough so that $\xi\in\Kinf$, it is easy to see that $\xi$ is a nonlinear scaling. 
Define $W:=\xi\circ V$ and observe by exploiting 
the chain rule for Dini derivatives\footnote{A good reference for the particular property we need seems elusive, so we provide a brief direct argument: By definition $\dot W_u(\phi) = \limsup_{h\to 0^+} \tfrac{1}{h}(\xi\circ V(x_h(\phi,u)) - \xi\circ V(\phi))$. As $\xi$ is smooth, we may apply the mean value theorem to obtain
$\xi\circ V(x_h(\phi,u)) - \xi\circ V(\phi) = \xi'(r_h) (V(x_h(\phi,u)) - V(\phi) )$, where $r_h$ is some point in between $V(\phi)$ and $V(x_h(\phi,u))$. As $h\to 0^+$ we have $\xi'(r_h) \to \xi'(V(\phi)) > 0$. Multiplication by a positively convergent sequence does not change the limit superior and we obtain $\dot W_u(\phi) = \xi'(V(\phi)) \dot V_u(\phi)$.
}, where we use that $\xi$ has a positive derivative, that 
$\dot{W}_u(\phi)=\frac{1}{\sigma(V(\phi))}\dot{V}_u(\phi)$ and thus \eqref{ImplicationIneq_cont-aux2} can be transformed into
\begin{equation}
|\phi(0)| \geq \chi(\|u\|)
\quad \Rightarrow \quad \dot{W}_u(\phi) \leq - \omega \circ \xi^{-1}(|\phi(0)|),
\label{ImplicationIneq_cont-aux3}
\end{equation}
and $\omega \circ \xi^{-1} \in\Kinf$ as a composition of $\Kinf$-functions.
As
\begin{equation}
\label{LyapFunk_1Eig-aux-W}
\xi\circ \psi_1(|\phi(0)|) \leq W(x) \leq \xi\circ \psi_2(\|\phi\|) \quad \forall x \in {\mathcal X^n},
\end{equation}
we see that $W$ is an ISS LKF in
implication form with pointwise dissipation and $\Kinf$-decay rate. 
\end{proof} 

\begin{remark}
A counterpart of Proposition~\ref{prop:Kinf-decay-rate-always-suffices-pointwise-dissipation} can be shown also for ISS LKF in implication for with LKF-wise dissipation.
\end{remark}

The following result states that any ISS LKF  in \sumform{} with pointwise dissipation is also an ISS LKF in implication form with pointwise dissipation.

\vspace{2mm}
\begin{proposition}(\textbf{\Sumform{}} $\Rightarrow$ \textbf{implication form})
\label{prop-sum-imp}
For system \eqref{eq:time-delay}, if $V$ is an ISS LKF in \sumform{} with pointwise dissipation,  then it is also an ISS LKF in implication form with pointwise dissipation.
\end{proposition}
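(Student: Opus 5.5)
The argument follows the classical ODE one: choose the implication gain so that, whenever the implication's hypothesis holds, the input term is dominated by half of the pointwise decay. Suppose $V$ satisfies \eqref{eq:ISS-LK-sum-relaxed} with $\alpha,\chi\in\Kinf$. Keep the decay rate as $\tfrac12\alpha$, which is in $\Kinf\subset\PD$, and take the implication gain to be $\tilde\chi:=\alpha^{-1}\circ(2\chi)$. This is a $\Kinf$-function, being a composition of $\Kinf$-functions; here we use that $\alpha\in\Kinf$ is invertible on $\R_+$.

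Fix $\phi\in\X$ and $u\in\Uc$ with $|\phi(0)|\geq\tilde\chi(\|u\|)$. Since $\alpha$ is increasing, this gives $\alpha(|\phi(0)|)\geq 2\chi(\|u\|)$, that is, $\chi(\|u\|)\leq\tfrac12\alpha(|\phi(0)|)$. Inserting this into \eqref{eq:ISS-LK-sum-relaxed} yields
\begin{align*}
\dot V_u(\phi)\leq -\alpha(|\phi(0)|)+\tfrac12\alpha(|\phi(0)|)=-\tfrac12\alpha(|\phi(0)|).
\end{align*}
This is exactly \eqref{eq:ISS-LK-implication-relaxed}, with gain $\tilde\chi$ and decay rate $\tfrac12\alpha$.

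There is no real obstacle here, only two points of care. First, the quantity $\|u\|$ must be the same in both definitions; it is, since both \eqref{eq:ISS-LK-sum-relaxed} and \eqref{eq:ISS-LK-implication-relaxed} use the same norm of the input $u$ for the given $\phi$. Second, the sandwich bounds \eqref{eq:Sandwich} are inherited unchanged, because the functional $V$ itself is not modified. Hence $V$ is an ISS LKF in implication form with pointwise dissipation.
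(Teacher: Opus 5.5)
Your proof is correct and matches the paper's argument: the paper also takes the implication gain $\alpha^{-1}\circ 2\chi$ and decay rate $\tfrac{1}{2}\alpha$. It then uses $\chi(\|u\|)\le\tfrac12\alpha(|\phi(0)|)$ under the implication's hypothesis to obtain $\dot V_u(\phi)\le-\tfrac12\alpha(|\phi(0)|)$.
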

\vspace{2mm}

\begin{proof}
If $V$ is an ISS LKF in \sumform{} with pointwise dissipation, then \eqref{eq:ISS-LK-sum-relaxed} holds with some $\alpha,\chi\in\Kinf$. Thus,
\begin{align*}
|\phi(0)| \geq \alpha^{-1}\circ 2\chi(\|u\|) \qrq \dot{V}_u(\phi) \leq -\frac{1}{2}\alpha(|\phi(0)|),
\end{align*}
and the claim follows.
\end{proof}
\vspace{2mm}

Our main result also exploits the following relaxation of the concept of ISS LKF in implication form so that the decay estimate ensures uniform global stability.

\vspace{2mm}
\begin{definition}[UGS LKF]
\label{def:noncoercive_UGS_LF}
An LKF candidate $V:{\mathcal X^n} \to \R_+$ is called a \emph{UGS LKF} for \eqref{eq:time-delay} if there exists $\chi \in\Kinf$, such that for all $\phi \in {\mathcal X^n}$ and all $u\in\Uc$,
\begin{equation}
\label{DissipationIneq_UGS_LK_functionals}
V(\phi) \geq \chi(\|u\|) \qrq \dot{V}_u(\phi) \leq 0.
\end{equation}
\end{definition}
\vspace{2mm}

As will be formalized in Section \ref{sec:LKF:UGS}, the existence of a UGS LKF ensures uniform global stability of the {origin}. 

\subsection{Main result}

In \cite{CPM17}, it has been conjectured that the existence of an ISS LKF in \sumform{} with pointwise dissipation is enough to ensure ISS. In light of Proposition \ref{prop-sum-imp}, this conjecture would be solved if we managed to show that  the existence of an ISS LKF $V$ in implication form with pointwise dissipation is enough to ensure ISS. To date, this conjecture remains open, but our main result states that ISS (and, actually, $V$-ISS) indeed holds if $V$ is also a UGS LKF.

\vspace{2mm}
\begin{theorem}[ISS with pointwise dissipation]
\label{thm:Krasovskii-Theorem} 
Let Assumptions~\ref{ass:Regularity-of-f} and \ref{ass:0-is-an-equilibrium} hold.  
If there exists an LKF candidate $V:\mathcal X^n\to\R_+$ which is simultaneously an ISS LKF with pointwise dissipation (in either implication or \sumform) and a UGS LKF for \eqref{eq:time-delay}, then \eqref{eq:time-delay} is $V$-ISS and, in particular, ISS.
\end{theorem}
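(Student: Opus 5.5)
By Proposition~\ref{prop-sum-imp}, it suffices to treat the case where $V$ is an ISS LKF in implication form with pointwise dissipation, with gain $\chi_1$ and rate $\alpha$. By Proposition~\ref{prop:Kinf-decay-rate-always-suffices-pointwise-dissipation}, we may take $\alpha\in\Kinf$. The rescaling $\xi\circ V$ does not affect the UGS LKF property, since $\xi'>0$ and the level-set condition transforms with $\chi$ replaced by $\xi\circ\chi$. We may also take one common gain $\chi$ for both properties by using the maximum of the two gains. The strategy is a $V$-ISS superposition principle: $V$-ISS is equivalent to the conjunction of a $V$-version of uniform global stability ($V$-UGS) and a $V$-version of uniform asymptotic gain ($V$-UAG). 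I will first state and prove this principle by adapting the standard arguments of \cite{SoW96,MiW18b}, replacing $\|\cdot\|$ by $V$ throughout.

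\textbf{Step 1 ($V$-UGS).} The UGS LKF property gives $V(x_t)\le \max\{V(x_0),\chi(\|u\|)\}$ for all $t\ge0$. The argument is a comparison argument for continuous functions with Dini derivative: $t\mapsto V(x_t)$ is continuous, and on any interval where $V(x_t)>\chi(\|u\|)$ its upper Dini derivative is $\le 0$, so it cannot increase there. Continuity of $V$ and of the flow are what make this work. Together with BIC and the sandwich bound $|x(t)|\le\psi_1^{-1}(V(x_t))$, this also yields forward completeness.

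\textbf{Step 2 ($V$-UAG).} I must show the following: for every $\eps,r>0$ there is $\tau$ such that $V(x_0)\le r$ and $\|u\|\le r$ imply $V(x_t)\le \eps+\gamma(\|u\|)$ for all $t\ge\tau$. Set $\gamma:=\chi\circ$(something). While $|x(t)|\ge\chi(\|u\|)$, we have $\dot V\le-\alpha(|x(t)|)$. By Step 1, $V(x_t)\le R:=\max\{r,\chi(r)\}$ on the whole horizon. Hence the total time during which $|x(t)|\ge\delta$ (with $\delta\ge\chi(\|u\|)$) is at most $R/\alpha(\delta)$.

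The main obstacle comes here. Smallness of the current value $|x(t)|$ does not make $V(x_t)$ small, because $V$ depends on the whole history. I need a time window of length $\theta$ on which $|x|$ stays small throughout. Then $\|x_t\|$ is small, so $V(x_t)\le\psi_2(\|x_t\|)$ is small, and Step 1 keeps $V$ small afterwards up to $\chi(\|u\|)$. To obtain such a window, split $[0,\tau]$ into intervals of length $\theta$. Only boundedly many of these intervals can contain substantial time with $|x|\ge\delta$. On the other intervals $|x|$ might still make short excursions above $\delta$. To rule these out, use that $|\dot x|=|f(x_t,u)|\le M(R,r)$ is bounded via Assumption~\ref{ass:Regularity-of-f} on the bounded set. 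Then any excursion from $\delta$ to $2\delta$ takes time at least $\delta/M$, during which $|x|\ge\delta$ and so $V$ decreases by at least $\alpha(\delta)\delta/M$. Such excursions can therefore occur only finitely often, with an explicit bound $RM/(\alpha(\delta)\delta)$. Choosing $\tau$ to exceed $\theta$ times (number of bad intervals $+1$) guarantees an interval of length $\theta$ on which $|x|\le 2\delta$. Taking $\delta:=\max\{\psi_2^{-1}(\eps)/2,\chi(\|u\|)\}$ gives $V(x_t)\le \max\{\eps,\psi_2(2\chi(\|u\|)),\chi(\|u\|)\}$ at the end of that window, hence for all later $t$ by Step 1. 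The cases where $\delta$ is dominated by $\chi(\|u\|)$ need care, and the uniform constant $M$ must be taken on the bounded set determined by $R$ and $r$.

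\textbf{Step 3.} Combine $V$-UGS and $V$-UAG via the superposition argument to construct $\beta\in\KL$, in the standard way: the UGS bound handles small times, and UAG with $\eps$ shrinking gives the decay, giving \eqref{eq:V-iss_sum}. ISS then follows from Proposition~\ref{prop:V-ISS_and_ISS}. I expect the uniform window argument of Step 2, which has to control excursions of $|x|$ through the derivative bound, to be the delicate part.
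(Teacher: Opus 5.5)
Your proposal is correct and is essentially the paper's proof. The paper makes the same reductions, obtains $V$-UGS from the UGS LKF by the same comparison argument, and uses the derivative bound $|\dot x(t)|\le\mu(r)$ for $t\ge\theta$ to turn each point with $|x(t_k)|\ge\psi_2^{-1}(\eps)$ into an interval on which $|x|\ge\max\{\frac12\psi_2^{-1}(\eps),\chi(\|u\|)\}$. The paper argues by contradiction to get $V$-ULIM (assuming $V(x_t)\ge\eps+\psi_2\circ 2\chi(\|u\|)$ on $[0,\tau]$, which also makes $V$ nonincreasing there) and then invokes the superposition theorem. You instead locate the good window of length $\theta$ directly and inline the step $V$-ULIM $\wedge$ $V$-UGS $\Rightarrow$ $V$-UAG.

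When you write it out, fix two bookkeeping points. First, apply the derivative bound only for $t\ge\theta$: for non-coercive $V$, the condition $V(x_0)\le r$ does not bound $\|x_0\|$, so $|f(x_t,u(t))|$ is uncontrolled on $[0,\theta)$. Second, do the time accounting only up to the first time $V(x_t)\le\chi(\|u\|)$, since before that time $V$ is nonincreasing and after it Step 1 already gives the bound.
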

\vspace{2mm}

The proof of this result requires the introduction of further notions related to $V$-stability and corresponding LKF tools. It is therefore postponed to Section~\ref{sec:proof}.

\section{Discussion}\label{sec:discussion}

Let us briefly discuss the novelty of Theorem \ref{thm:Krasovskii-Theorem} by comparing it to other approaches in the literature. 

\subsection{Relations to existing results}
\label{subsec:relations}

A sufficient condition for ISS in terms of pointwise dissipation has been obtained in \cite{KLW17}. In the terminology of Definition~\ref{def:ISS_LKF-implication}, it can be expressed as
\begin{align}\label{eq-12}
V(\phi)\geq \chi(\|u\|)\quad\Rightarrow\quad \dot V_u(\phi)\leq -\alpha(|\phi(0)|).
\end{align}

This condition lies halfway between \eqref{eq:ISS-LK-implication-V} and \eqref{eq:ISS-LK-implication-relaxed}, in the sense that the dissipation is requested in a pointwise manner but it needs to hold whenever the LKF {itself} qualitatively dominates the input norm. It has been shown in \cite[Theorem 2]{KLW17} that \eqref{eq-12} is sufficient to ensure ISS. This result can be seen as a corollary of Theorem \ref{thm:Krasovskii-Theorem} since \eqref{eq-12} implies that $V$ is both an ISS LKF with pointwise dissipation in implication form, and a UGS LKF. Our result therefore strengthens \cite[Theorem 2]{KLW17} in three different ways. 
\begin{itemize}
    \item Our requirements on $V$ are weaker than those in \cite[Theorem 2]{KLW17}, as $V$ is requested to decay only when $|\phi(0)| \geq \gamma(\|u\|)$. 
    \item Theorem~\ref{thm:Krasovskii-Theorem} does not merely ensure ISS but also $V$-ISS, which is a potentially stronger property.
    \item Our requirements on the nonlinearity $f$ (Assumption~\ref{ass:Regularity-of-f}) are weaker than those in \cite[Theorem 2]{KLW17}. Namely, we do not assume Lipschitz continuity of $f$ with respect to its second argument (the input $u$), which was important in \cite{KLW17}. 
\end{itemize}

In a different direction, \cite{CPM17,chaillet2023growth, loko2024growth} assume growth restriction on the LKF candidate or on the vector field in order to establish ISS based on pointwise dissipation. In Theorem \ref{thm:Krasovskii-Theorem}, no such growth constraints are needed, at the price of a uniform global stability requirement (through the same LKF $V$).

An extension of results using growth restrictions is the so-called exponential trick studied in \cite{orlowski2022adaptive} and \cite[Section~4.3]{thesisLoko2025}. This method considers an LKF candidate of the form
\begin{equation}
\label{eq:Loko-LKF-form}
    W(\phi) = w_1(\phi(0)) + \int_{-\theta}^0 w_2(\phi(s)) ds.
\end{equation}
Under suitable conditions on $W$ and the system \eqref{eq:time-delay}, it is shown that for certain $c,\kappa>0$ the modified LKF
\begin{equation}
    W_{c,\kappa}(\phi) = \kappa w_1(\phi(0)) + \int_{-\theta}^0 e^{cs}w_2(\phi(s)) ds.
\end{equation}
yields an LKF with LKF-wise dissipation (hence, ISS), if pointwise dissipation holds for $W$. An example in \cite{loko2024growth} shows that this is by no means a universal method, but may or may not work depending on the system.

A further method is proposed in \cite{loko2024growth}, where it is assumed that $V:\X\to \R_+$ is Lipschitz continuous on bounded sets LKF, and the following pointwise decay estimate  holds:
\begin{equation}\label{eq:epiphane}
    \dot V_{u\equiv \mathfrak u}(\phi) \leq - \alpha(Q(\phi(0))) + \gamma(|\mathfrak u|), \quad \phi \in \X, \ \mathfrak u \in \R^m,
\end{equation}
where $\alpha, \gamma \in \Kinf$ and $Q:\R^n\to \R_+$ is $C^1$, positive definite, and radially unbounded. \cite[Theorem~1]{loko2024growth} states that if there exists a $\sigma \in \Kinf$ such that for all $\phi \in \X, \mathfrak u \in \R^m$
\begin{equation}
\label{eq:epiphane2}
    \nabla Q(\phi(0)) f(\phi, \mathfrak u) \leq \sigma\left( \max_{\tau \in [-\theta,0]} Q(\phi(\tau)) \right) + \gamma(|\mathfrak u|),
\end{equation}
and
\begin{equation}
\label{eq:epiphane3}
    \lim_{s\to \infty} \frac{\alpha(s)}{\sigma(se^{\theta})} >0,
\end{equation}
then the system is ISS.

In \cite[Proposition 2]{loko2024growth}, an example of a delay system is presented, for which one can construct the functions $V$ and $Q$ as above to conclude ISS via \cite[Theorem~1]{loko2024growth}. It was also shown that the constructed functional $V$ does not satisfy the assumptions of Theorem~\ref{thm:Krasovskii-Theorem}.

We note that even though the author of \cite[Example 4.11]{thesisLoko2025} claims that their particular example shows that all systems covered by Theorem~\ref{thm:Krasovskii-Theorem} can be treated using an approach as described in \eqref{eq:epiphane}--\eqref{eq:epiphane3}, no argument is provided why this should be the case.

\subsection{Criterion of uniform global asymptotic stability}

From the proof of Theorem~\ref{thm:Krasovskii-Theorem}, we will see that if the gain $\chi$ in \eqref{eq:ISS-LK-implication-relaxed} is identically zero, then the gain $\gamma$ in the $V$-ISS estimate \eqref{eq:V-iss_sum} can also be chosen to be zero.  For the case $V=\|\cdot\|$, a system is called uniformly globally asymptotically stable (UGAS), if the estimate \eqref{eq:V-iss_sum} holds with $\gamma=0$, see, e.g., \cite{LSW96}. We therefore adopt the name $V$-UGAS, if \eqref{eq:V-iss_sum} holds for a given LKF candidate $V$ with $\gamma=0$. This yields the following:

\begin{corollary}[$V$-UGAS]
\label{cor:Krasovskii-Theorem-zero-gain} 
If \eqref{eq:time-delay} admits an ISS LKF $V$ with pointwise dissipation (in implication or \sumform) with gain $\chi\equiv 0$ (in \eqref{eq:ISS-LK-implication-relaxed} or \eqref{eq:ISS-LK-sum-relaxed}), then it  is $V$-UGAS.
\end{corollary}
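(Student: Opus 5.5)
With gain $\chi\equiv 0$, the corollary should follow from Theorem~\ref{thm:Krasovskii-Theorem} once we check that its gain can be taken zero. First I check that $V$ is automatically a UGS LKF with $\chi\equiv 0$. In implication form, \eqref{eq:ISS-LK-implication-relaxed} with $\chi\equiv0$ gives $\dot V_u(\phi)\le -\alpha(|\phi(0)|)\le 0$ for all $\phi$ and $u$. In \sumform{}, \eqref{eq:ISS-LK-sum-relaxed} gives the same inequality. Hence \eqref{DissipationIneq_UGS_LK_functionals} holds with zero gain.

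Strictly, Definition~\ref{def:noncoercive_UGS_LF} asks for $\chi\in\Kinf$. Since the condition with $\chi\equiv0$ is stronger than the condition with any $\chi\in\Kinf$, $V$ is a UGS LKF in the sense of that definition. Likewise $V$ is an ISS LKF with pointwise dissipation for every gain in $\Kinf$. Theorem~\ref{thm:Krasovskii-Theorem} therefore applies and yields $V$-ISS. This alone does not give $\gamma=0$, so the rest of the argument tracks the gain.

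\textbf{Step 1: $V$ is nonincreasing along every trajectory.} Since $V$ is continuous and $t\mapsto x_t$ is continuous into $\X$, the map $t\mapsto V(x_t(x_0,u))$ is continuous. Its upper right Dini derivative is $\dot V_{u(t+\cdot)}(x_t)\le -\alpha(|x(t)|)\le0$. A continuous function with nonpositive upper right Dini derivative is nonincreasing, so $V(x_t)\le V(x_0)$. By the sandwich bounds \eqref{eq:Sandwich} this gives boundedness, and BIC gives global existence. Because the bound holds for every input, this is uniform stability with gain $0$ in the $V$-sense.

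\textbf{Step 2: uniform $V$-attractivity with zero gain.} I would use the same superposition route as the theorem: $V$-ISS is equivalent to a $V$-uniform limit property combined with $V$-UGS. I would check that each step of that proof (Section~\ref{sec:proof}) produces gains bounded by a multiple or composition of $\chi$, so they vanish when $\chi\equiv0$.

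Concretely, fix $r$ with $V(x_0)\le r$ and $\eps>0$. By Step 1, $\|x_t\|$ is bounded by some $R(r)$, and by Assumption~\ref{ass:Regularity-of-f} so is $|\dot x|$ for $\|u\|\le C$. Integrating the dissipation gives $\int_0^\infty\alpha(|x(s)|)ds\le r$. Hence $|x(s)|<\delta$ on most of every long interval. With the derivative bound this forces $|x|\le\delta'$ on some whole window of length $\theta$, so that $\|x_t\|\le\delta'$ and $V(x_t)\le\psi_2(\delta')\le\eps$. The time at which this happens is uniform in $x_0$. Monotonicity then keeps $V(x_t)\le\eps$ afterwards.

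The main obstacle is that the input enters the derivative bound, so uniformity over all $u$ needs care. I would avoid it by quoting the theorem's proof with $\chi\equiv0$: wherever $\|u\|$ enters, it does so only through restrictions like $|\phi(0)|\ge\chi(\|u\|)$, which are vacuous when $\chi\equiv0$. Alternatively, for $\gamma=0$ it suffices to treat $u$ with bounded norm $\|u\|\le C$, which keeps the derivative bound available. Combining uniform stability and attractivity via the standard $\KL$ construction gives \eqref{eq:V-iss_sum} with $\gamma=0$, i.e.\ $V$-UGAS.
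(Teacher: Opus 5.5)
\textbf{Where the proof breaks: uniformity in $u$.} Your route is the paper's: zero-gain $V$-UGS from monotonicity of $t\mapsto V(x_t)$, zero-gain $V$-ULIM by rerunning the proof of Theorem~\ref{thm:Krasovskii-Theorem}, then a $\KL$ construction. Step~1 is fine, and Step~2 is fine for inputs of bounded magnitude. One small repair in Step~2: use the derivative bound only for $t\ge\theta$, since before that $\|x_t\|$ involves $\|x_0\|$, which $V(x_0)$ does not control.

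The gap is your last paragraph, which claims uniformity over all $u\in\Uc$. Both of your justifications fail.
\begin{itemize}
\item In the proof of Theorem~\ref{thm:Krasovskii-Theorem}, $\|u\|$ does not enter only through $\chi(\|u\|)$. It also enters through the derivative bound \eqref{eq:Global_Bounds_on_derivative-aux}. There $\mu_2$ comes from the term $\xi_2(|v|)$ of Proposition~\ref{prop:Bounds_on_f} and is untouched by $\chi\equiv 0$. So the time \eqref{eq:Ulim-time-definition-u=0} depends on the bound $r$ on $\|u\|$. Your own sketch has the same dependence (``$|\dot x|$ bounded for $\|u\|\le C$'').
\item ``For $\gamma=0$ it suffices to treat $\|u\|\le C$'' is not a reduction. $V$-UGAS asks for one $\beta$ serving all $u\in\Uc$, and a family $\beta_C$ does not provide that.
\end{itemize}

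\textbf{The uniform claim is false in general.} Let $n=m=1$, $\theta=1$, and
$V(\phi):=\max_{s\in[-1,0]}\bigl(|\phi(s)|-\frac12\int_s^0|\phi(\tau)|\,d\tau\bigr)$.
This $V$ is globally Lipschitz, with $V(\phi)\ge|\phi(0)|$ and $\frac12\|\phi\|\le V(\phi)\le\|\phi\|$. Consider
$\dot x(t)=-x(t)+u(t)\min\{1,V(x_t)-|x(t)|\}$,
which satisfies Assumptions~\ref{ass:Regularity-of-f} and~\ref{ass:0-is-an-equilibrium}. Then $\dot V_u(\phi)\le-\frac12|\phi(0)|$ for all $\phi$ and all $u$:
\begin{itemize}
\item if $|\phi(0)|<V(\phi)$, newly created history values stay below the maximum, whose old contributions decrease at rate $\frac12|x(t)|$;
\item if $|\phi(0)|=V(\phi)$, the input term vanishes at that instant and $|x|$ decreases at rate $|x|$.
\end{itemize}
Yet while $|x(t)|<V(x_t)$, an input of amplitude $M$ drives $x$ from $0$ up to $V(x_t)-\delta$ and back to $0$ in a time that tends to $0$ as $M\to\infty$. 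Such a spike costs almost no dissipation. Since $V$ remembers it for one delay period, $V$ stays almost constant.

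Repeat this every half time unit, with summable $\delta_k$ and increasing amplitudes. For any $x_0$ with $x_0(0)=0<V(x_0)$, this keeps $V(x_t)\ge V(x_0)/2$ on any prescribed $[0,T]$, and on $\R_+$ for an unbounded $u\in\Uc$. Since this $V$ is coercive, even norm-UGAS fails uniformly over $\Uc$.

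\textbf{What your argument actually proves.} For every $R>0$ there is $\beta_R\in\KL$ with $V(x_t(x_0,u))\le\beta_R(V(x_0),t)$ for all $x_0$, all $u$ with $\|u\|\le R$, and all $t\ge0$. This is $V$-UGAS for inputs with values in a bounded set, as in \cite{LSW96}. The paper's sketch also delivers only this, because its zero-gain $V$-ULIM (Definition~\ref{def:Vstability}) covers only $\|u\|\le r$. State and prove this version, and drop the claim of uniformity over all of $\Uc$.
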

\vspace{2mm}

\begin{proof} 
Checking the proof of Theorem~\ref{thm:Krasovskii-Theorem} (and, in particular, the proof of  Proposition~\ref{prop:UGS_LK_theorem} that we use there), we 
see that the system \eqref{eq:time-delay} satisfies the $V$-UGS property with 0 gain and $V$-ULIM property with 0 gain. 
This implies by arguments similar to those in \cite[Theorem 2]{MiW19b} that \eqref{eq:time-delay} is $V$-UGAS.
\end{proof}

\vspace{2mm}
$V$-UGAS has its roots in the ISS literature on finite-dimensional systems and was instrumental for the derivation of converse Lyapunov results, \cite{LSW96}. Corollary~\ref{cor:Krasovskii-Theorem-zero-gain} extends the classical global asymptotic result by Krasovskii \cite[Chapter 5, Theorem 2.1, p. 132]{HaV93} from input-free systems to systems with inputs, and we even obtain $V$-UGAS, in contrast to merely UGAS obtained in \cite[Chapter 5, Theorem 2.1, p. 132]{HaV93}.

\subsection{Example}

We exhibit below an example of an ISS system for which Theorem~\ref{thm:Krasovskii-Theorem} can be used to prove ISS, while none the approaches described in Section~\ref{subsec:relations} can be applied for the family of quadratic LKF candidates under consideration.
Consider the system
\begin{equation}
\label{eq:examplesys}
\begin{aligned}
     \dot{x}(t) & = - x(t) + x(t-1)  - \psi(x^2(t)-u^2(t))x(t),\\
    x(0) &= x_0  \in C([-1,0],\R) =:\mathcal{X},  
\end{aligned}
\end{equation}
where $\psi: \R \to \R$ is a smooth function such that $\psi(s) = 0$ for all $s\leq 0$, $\psi(s) > 0$ for all $s > 0$, and $\psi(s)s\to0$ as $s\to +\infty$.

\begin{proposition}
\label{prop:example-summary}
    Consider system \eqref{eq:examplesys}. For all constants $c\geq0$, $\kappa >0$ the functional defined by
    \begin{equation}
    \label{eq:Vck-def}
        V_{c,\kappa}(\phi) := \kappa\phi^2 (0)+ \int_{-1}^0 e^{cs} \phi^2(s) ds, \quad \phi \in \mathcal{X},
    \end{equation}
    is an LKF candidate that is Lipschitz continuous on bounded sets. In addition,
    \begin{enumerate}[label=(\roman*)]
    \item\label{itm:positive_result} the system  is $V_{0,1}$-UGS with zero gain and $V_{0,1}$-ISS.

    \item\label{itm:iss} the system  is ISS.

    \item for every $(c,\kappa)\neq (0,1)$ there exists a $\phi \in \mathcal{X}$ such that $|\phi(0)|>0$ and $\dot V_{c,\kappa,u\equiv 0}(\delta\phi) >0$ for all $\delta \in (0,1]$. In particular, $V_{c,\kappa}$ does not certify local stability with zero input, nor ISS. 

    \item there is no choice of $c\geq 0$,  $\kappa>0$ such that $V:= V_{c,\kappa}$ admits an LKF-wise dissipation estimate of the form \eqref{eq:ISS-LK-implication-V} or \eqref{eq:ISS-LK-sum-LKFwise}.

    \item\label{item:wang} there is no choice of $c\geq0$, $\kappa>0$ for which there exist $\alpha,\chi\in \Kinf$ such that \eqref{eq:KLW-condition} is satisfied for $V:=V_{c,\kappa}$.

    \item\label{itm:loko} there is no choice of $c\geq0$, $\kappa>0$ for which there exist $\alpha, \gamma \in \Kinf$, and a $Q:\R^n\to \R_+$ that is $C^1$, positive definite and radially unbounded, such that 
    \eqref{eq:epiphane} is satisfied for $V:=V_{c,\kappa}$.
    
    \end{enumerate}
\end{proposition}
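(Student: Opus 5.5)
Throughout I will work with the Driver derivative, which Remark~\ref{rem:DinivsDriver} permits because $V_{c,\kappa}$ is Lipschitz on bounded sets. That property and the sandwich bounds are immediate: $\kappa\phi(0)^2\le V_{c,\kappa}(\phi)\le(\kappa+1)\|\phi\|^2$, and differences of squares are Lipschitz on bounded sets.

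Write $a=\phi(0)$, $b=\phi(-1)$ and let $\muval$ be the input value. The basic computation is
\begin{align*}
D^+V_{c,\kappa}(\phi,\muval)={}&2\kappa a\bigl(-a+b-\psi(a^2-\muval^2)a\bigr)+a^2-e^{-c}b^2\\
&-c\int_{-1}^0e^{cs}\phi^2(s)\,ds .
\end{align*}
Every item will be read off from this formula.

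\textbf{Items (i) and (ii).} For $(c,\kappa)=(0,1)$ the formula becomes $-(a-b)^2-2\psi(a^2-\muval^2)a^2\le0$ for every $\phi$ and $\muval$. Hence $V_{0,1}$ is a UGS LKF with any gain, and the argument of Proposition~\ref{prop:UGS_LK_theorem} gives UGS with zero gain.

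For pointwise dissipation, take $|a|\ge2|\muval|$. Then $a^2-\muval^2\ge\tfrac34a^2$, so the derivative is at most $-\alpha(|a|)$ with
\[
\alpha(r):=2r^2\min_{s\in[3r^2/4,\,r^2]}\psi(s).
\]
This $\alpha$ belongs to $\PD$; it is not $\Kinf$ since $\psi(s)s\to0$, but $\PD$ suffices in implication form. The passage from $|\muval|$ to $\|u\|$ uses $|u(t)|\le\|u\|$ a.e.

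Theorem~\ref{thm:Krasovskii-Theorem} then yields $V_{0,1}$-ISS, and Proposition~\ref{prop:V-ISS_and_ISS} yields ISS.

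\textbf{Item (iii).} This is the step I expect to be the real work. Set $\muval=0$. Maximising the quadratic part $(1-2\kappa)a^2+2\kappa ab-e^{-c}b^2$ over $b$ gives the value $a^2(\kappa^2e^c-2\kappa+1)$, attained at $b=\kappa e^ca$.

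This value is positive whenever $(c,\kappa)\ne(0,1)$. For $c=0$ it equals $(\kappa-1)^2>0$ when $\kappa\ne1$. For $c>0$ it exceeds $(\kappa-1)^2\ge0$.

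So I take a continuous $\phi$ with $\phi(0)=a$, $\phi(-1)=\kappa e^ca$, and $\int_{-1}^0\phi^2$ as small as desired. A piecewise linear function vanishing away from thin spikes at the endpoints works. The integral term is then negligible against $m a^2$, where $m:=\kappa^2e^c-2\kappa+1>0$.

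Since $\psi$ is smooth and vanishes on $(-\infty,0]$, there is $L$ with $\psi(s)\le Ls$ on $[0,1]$. The term $-2\kappa\psi(a^2)a^2$ is therefore $O(a^4)$. Replacing $\phi$ by $\delta\phi$ multiplies the quadratic terms by $\delta^2$ and the $\psi$-term by at most $\delta^4\le\delta^2$.

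Fixing $|a|$ small enough therefore gives positivity of the derivative at $\delta\phi$ for all $\delta\in(0,1]$. This shows that neither local stability with zero input nor ISS can be certified by $V_{c,\kappa}$.

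\textbf{Items (iv)--(vi).} For $(c,\kappa)\ne(0,1)$, each of these conditions at $\muval=0$ (where $\chi(0)=\gamma(0)=0$) would force a nonpositive derivative at points with $\phi(0)\ne0$. Item (iii) contradicts this, so only $(0,1)$ remains.

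For (iv) with $(0,1)$, take $\phi\ne0$ with $\phi(0)=\phi(-1)=0$. The derivative is then $0$, while $-\alpha(V(\phi))<0$. This rules out both \eqref{eq:ISS-LK-implication-V} and \eqref{eq:ISS-LK-sum-LKFwise}, the latter with $\muval=0$.

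For (v) with $(0,1)$, take $\phi(0)=\phi(-1)=a\ne0$ and $|\muval|>|a|$, so that $\psi(a^2-\muval^2)=0$. Give $\phi$ a large interior bump so that $V(\phi)\ge\chi(|\muval|)$. The derivative is again $0$, whereas \eqref{eq:KLW-condition} demands at most $-\alpha(|a|)<0$.

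For (vi) with $(0,1)$, set $\muval=0$ and $\phi(0)=\phi(-1)=a$. The derivative is $-2\psi(a^2)a^2$, which tends to $0$ as $a\to\infty$. However, $\alpha(Q(a))\to\infty$ because $Q$ is radially unbounded, which contradicts \eqref{eq:epiphane}.
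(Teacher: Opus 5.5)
Your proposal is correct and follows essentially the paper's proof. It uses the same Driver-derivative formula, the same test function in (iii) with $\phi(-1)=\kappa e^c\phi(0)$, a negligible integral term and small $|\phi(0)|$, and the same reduction to $(c,\kappa)=(0,1)$ with the same counterexample functions in (iv)--(vi). The differences are cosmetic: you maximize over $\phi(-1)$ instead of completing the square, and use $\psi(s)\le Ls$ instead of $\psi(s)\le Cs^2$. In (v), the zero derivative contradicts the required strictly negative bound whether you read it as $-\alpha(|\phi(0)|)$ or literally as the LKF-wise $-\alpha(V(\phi))$.
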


\begin{remark}
    The consequence of the previous Proposition~\ref{prop:example-summary} for the other analysis methods mentioned at the beginning of this section are the following:

 (i) For system \eqref{eq:examplesys} and the proposed class of LKF candidates the results of \cite{Kan17} are not applicable. By the analysis, the only possible candidate is $V_{0,1}$. However, even in this case \eqref{eq-12} cannot be satisfied.
        
(ii) As far as the exponential trick is concerned, we have seen that amongst the family of LKF candidates the only viable LKF is indeed $V_{0,1}$ and an application of the exponential trick can only deteriorate the properties of the LKF candidate. Incidentally, in this way we have presented a new example of a situation where the exponential trick is not applicable. Arguably, the example is conceptually simpler than the one presented in \cite{loko2024growth}.  

(iii) Also the method described in \eqref{eq:epiphane}--\eqref{eq:epiphane3} fails for system \eqref{eq:examplesys} because already the basic assumption cannot be met. On the other hand, examples in \cite{thesisLoko2025} show that this method is applicable in situations where the methods of the present paper are not. So none of the approaches is strictly superior to the other. 
\end{remark}

\begin{proof} (of Proposition \ref{prop:example-summary})
    It is easy to see that for every $c\geq0$, $\kappa>0$ the conditions of Definition~\ref{def:Sandwiched-map} are satisfied, meaning that $V_{c,\kappa}$ is an LKF candidate. In view of \cite[Example 1]{CKP23}, $V_{c,\kappa}$ is Lipschitz continuous on bounded sets. Following Remark~\ref{rem:DinivsDriver}, it is sufficient to consider the Driver derivative in the remainder of the proof. This has the added benefit that our considerations are directly comparable to the methods discussed in Section~\ref{subsec:relations}.
    
    (i), (ii). Let $c\geq 0$ and $\kappa>0$. For the Driver derivative of $V_{c,\kappa}$ in $\phi \in \mathcal{X}$ we obtain (following the detailed calculations provided in \cite[Example~1]{CKP23})
    \begin{align}       
     \nonumber
        &\hspace{-4mm} D^+ V_{c,\kappa} (\phi,\mathfrak{u})\\
        &\hspace{-4mm}= -2\kappa\phi^2(0)
        + 2\kappa\phi(0)\phi(-1)
        - 2\kappa\psi(\phi^2(0) - \mathfrak{u}^2)\phi^2(0)
        \nonumber\\
        \nonumber
        & + \phi^2(0) - e^{-c}\phi^2(-1) -
        c \int_{-1}^0 e^{cs}\phi^2(s) ds
        \\
        \nonumber
        &\hspace{-4mm}=-\left( 2\kappa\psi(\phi^2(0) -\mathfrak{u}^2) - e^c\kappa^2-1+2\kappa\right)\phi^2(0)
        \\
        \label{eq:exampleVdot}
        & - \left(e^{c/2}\kappa\phi(0)-e^{-c/2}\phi(-1)\right)^2
 -
        c \int_{-1}^0 e^{cs}\phi^2(s) ds.
    \end{align}
    In particular, for $c=0$ and $\kappa=1$ we obtain
    \begin{align}
    \label{eq-13new}
        &\hspace{-3mm}D^+V_{0,1}(\phi,\mathfrak{u})\nonumber\\
        &=-2\psi(\phi^2(0) - \mathfrak{u}^2)\phi^2(0)
        - \left(\phi(0)-\phi(-1)\right)^2
        \leq 0 .
    \end{align}
    This shows that $V_{0,1}$ is a UGS Lyapunov function with  zero Lyapunov gain and thus system \eqref{eq:examplesys} is $V_{0,1}$-UGS with zero gain.
    In addition, observe that
\begin{align*}
    |\phi(0)|\geq 2|\mathfrak u|\quad\Rightarrow\quad \psi(\phi^2(0)-\mathfrak u^2)\geq \tilde \psi(|\phi(0)|),
\end{align*}
where $\tilde\psi(s):=\min_{|r|\leq s/2}\psi(s^2- r^2) = \min\{\psi([\frac{3}{4}s^2,s^2])\}$, for all $s\geq 0$. Clearly, $\tilde\psi \in \mathcal P$, and we get from \eqref{eq-13new} that
    \begin{equation}
      \hspace{-3mm}  |\phi(0)| \geq 2|\mathfrak u| \ \ \Rightarrow \ \  D^+V_{0,1}(\phi, \mathfrak{u}) \leq - 2\tilde\psi(|\phi(0)|)\phi^2(0).
    \end{equation}
    Thus, by Remark~\ref{rem:DinivsDriver}, $V_{0,1}$ is an ISS LKF in implication form (Definition~\ref{def:ISS_LKF-implication})  and by Theorem~\ref{thm:Krasovskii-Theorem} system \eqref{eq:examplesys} is $V_{0,1}$-ISS and ISS, which proves items \ref{itm:positive_result} and \ref{itm:iss}.

    (iii). Fix a pair $(c,\kappa) \neq (0,1)$. As $\psi$ is $C^\infty$ and takes the constant value zero for $s\in (-\infty,0]$, all derivatives of $\psi$ vanish in $0$ and there are constants $M,C>0$, such that $|s|\leq M$ implies $0\leq \psi(s) \leq Cs^2$. Introducing the constant 
    \[
    K := e^c\kappa^2+1-2\kappa = (e^c -1)\kappa^2 + (\kappa-1)^2 >0,
    \]
    where we use $(c,\kappa) \neq (0,1)$, the expression \eqref{eq:exampleVdot} can be written as
    \begin{multline}      
    \label{eq:exampleVdot2}
      D^+V_{c,\kappa}(\phi,\mathfrak u) = - 2\kappa\psi(\phi^2(0) - \mathfrak u^2)\phi^2(0)
        + K \phi^2(0)  \\
         - \left(e^{c/2}\kappa\phi(0) - e^{-c/2}\phi(-1)\right)^2 -
        c \int_{-1}^0 e^{cs}\phi^2(s) ds.
     \end{multline}

Consider $\phi \in \mathcal{X}$ such that the following conditions are satisfied: (a) $0< \phi^2(0) \leq M$,  (b) $6\kappa C \phi^4(0) \leq K$, (c) $\phi(-1) = e^c \kappa \phi(0)$, (d) $3c \int_{-1}^0 e^{cs}\phi^2(s) ds\leq K \phi^2(0)$. Then \eqref{eq:exampleVdot2} for $\mathfrak u=0$ yields
   \begin{align*}
       D^+ V_{c,\kappa}(\phi,0) &\stackrel{(c)}{=}  - 2\kappa \psi(\phi^2(0))\phi^2(0)  
        + K \phi^2(0)  \\
        & \hspace*{0.4cm}  -
        c \int_{-1}^0 e^{cs}\phi^2(s) ds.\\
        &\stackrel{(a)}{\geq} - 2\kappa C \phi^6(0) + K \phi^2(0) 
        -        c \int_{-1}^0 e^{cs}\phi^2(s) ds  \\
        &\stackrel{(d)}{>} \left(K - 2\kappa C \phi^4(0) - \frac{K}{3}\right) \phi^2(0) \stackrel{(b)}{>} 0.
    \end{align*}
    With the choice of $\phi$ satisfying (a)--(d), it is clear that for all $\delta\in (0,1]$ also $\delta \phi$ satisfies (a)--(d). We thus obtain that $D^+ V_{c,\kappa}(\delta\phi,0) >0$. This shows the claim.

    (iv) By (iii) the only possibility to obtain \eqref{eq:ISS-LK-implication-V} or \eqref{eq:ISS-LK-sum-LKFwise} is to choose $c=0, \kappa=1$. However, in this case the equality \eqref{eq-13new} holds. Choosing $0\neq \phi \in \mathcal{X}$ with $\phi(-1) = \phi(0) =0$ we obtain $D^+ V_{0,1}(\phi,\mathfrak{u}) =0$ for all $\mathfrak{u} \in\R$. However, both the conditions \eqref{eq:ISS-LK-implication-V} or \eqref{eq:ISS-LK-sum-LKFwise} require that $D^+ V_{0,1}(\phi,0) < 0$. It is therefore impossible to satisfy \eqref{eq:ISS-LK-implication-V} or \eqref{eq:ISS-LK-sum-LKFwise} for all $\phi,\mathfrak{u}$.

(v) Again, by (iii) we only need to consider the case  $c=0$ and $\kappa=1$. Fix an arbitrary $\chi\in\Kinf$. By choosing $\phi\in\mathcal{X}$ with $0< \phi(0) = \phi(-1)$ and  $\mathfrak u=\phi(0)$, we have from \eqref{eq-13new} that $D^+ V_{0,1}(\phi,\mathfrak{u}) =0$.  Now using the integral term in \eqref{eq:Vck-def}, we can ensure that $V(\phi) \geq \chi(|\mathfrak{u}|)$, so \ref{item:wang} holds.

(vi) By (iii), we only need to consider the case  $c=0, \kappa=1$. Choosing $\mathfrak u=0$,  and $\phi\in\mathcal{X}$ with $0< \phi(0) = \phi(-1)$, we have by    \eqref{eq-13new} that $D^+V_{0,1}(\phi,0)=-2\psi(\phi^2(0))\phi^2(0)$.
        
Since $\psi(s)s\to0$ as $s\to\infty$, we see  that $D^+{V}_{0,1}(r\phi,0) \to 0$ as $r\to \infty$. Thus for every $\alpha\in \Kinf$, and $Q:\R^n\to \R_+$ positive definite and radially unbounded  the condition \eqref{eq:epiphane} does not hold, as this would require that $D^+{V}_{0,1}(r\phi,0) \to -\infty$ as $r\to \infty$.
\end{proof}

\section{\texorpdfstring{$V$-stability theory}{V-stability theory}}\label{sec:Vstability}

On our way to proving Theorem \ref{thm:Krasovskii-Theorem}, we develop the theory of $V$-stability, which studies the long-term behavior of solutions not in terms of the classical $\|\cdot\|$-norm, but rather through a particular LKF candidate $V$. This section aims to introduce several $V$-stability notions, to provide LKF conditions to establish them in practice, and, more importantly, to give a superposition theorem for $V$-ISS in this new setup, characterizing $V$-ISS in terms of these weaker properties.

\subsection{\texorpdfstring{$V$-ISS superposition theorem}{V-ISS superposition theorem}}

In the same way as the classical concept of ISS can be extended to $V$-ISS, we can consider the following notions.

\vspace{2mm}
\begin{definition}[$V$-stability]
\label{def:Vstability}
Given an LKF candidate $V:\ \mathcal X^n\to\R_+$, the system \eqref{eq:time-delay} is  
\begin{enumerate}[label=(\roman*)]
	\item called \emph{$V$-uniformly locally stable ($V$-ULS)}, if there exist $ \sigma,\gamma
          \in \Kinf$ and $r>0$ such that, for all $x_0 \in {\mathcal X^n}$ with $V(x_0) \leq r$ and all $u\in\Uc$ with $\|u\|\leq r$, 
\begin{equation}
\label{eq:V-GSAbschaetzung}
V(x_t(x_0,u)) \leq \sigma(V(x_0)) + \gamma(\|u\|) \quad \forall t \geq 0.
\end{equation}

  \item 
  \label{def:Vstability-UGS}
  called \emph{$V$-uniformly globally stable ($V$-UGS)}, if there exist $ \sigma,\gamma
          \in \Kinf$ such that for all $ x_0 \in \mathcal X^n, u
          \in \Uc$ the estimate \eqref{eq:V-GSAbschaetzung} holds.

  \item called \emph{$V$-uniformly globally bounded ($V$-UGB)}, if there exist $ \sigma,\gamma
          \in \Kinf$ and $c \geq 0$ such that for all $ x_0 \in \mathcal X^n, u
          \in \Uc$ the following holds: 
          \begin{equation}
\label{eq:V-UGBAbschaetzung}
V(x_t(x_0,u)) \leq \sigma(V(x_0)) + \gamma(\|u\|) + c \quad \forall t \geq 0.
\end{equation}

\item 
\label{def:Vstability-ULIM}
said to have the \emph{$V$-uniform limit property ($V$-ULIM)}, if there exists
    $\gamma\in\Kinf\cup\{0\}$ so that for every $\eps>0$ and for every $r>0$ there
    exists a $\tau = \tau(\eps,r) \geq 0$ such that, 
for all $x_0 \in {\mathcal X^n}$ with $V(x_0) \leq r$ and all $u\in\Uc$ with $\|u\| \leq r$, there is a $t \in[0,\tau]$ such that 
\begin{align}
V(x_t(x_0,u)) \leq \eps + \gamma(\|u\|).
\label{eq:ULIM_ISS_section}
\end{align}

	\item said to have a \emph{$V$-global uniform asymptotic gain ($V$-GUAG)}, if there
          exists a $ \gamma \in \Kinf \cup \{0\}$ such that for all $ \eps, r
          >0$ there is a $ \tau=\tau(\eps,r) \geq 0$ such
          that for all $u \in \Uc$ and all $x_0 \in \mathcal X^n$ with $V(x_0)\le r$, it holds that
\begin{equation}	
\label{eq:V-UAG_Absch}
V(x_t(x_0,u)) \leq \eps + \gamma(\|u\|) \quad \forall t \geq \tau.
\end{equation}

	\item 
    \label{def:Vstability-UAG}
    said to have a \emph{$V$-uniform asymptotic gain ($V$-UAG)}, if there
          exists a
          $ \gamma \in \Kinf \cup \{0\}$ such that for all $ \eps, r
          >0$ there is a $ \tau=\tau(\eps,r) \geq 0$ such
          that for all $u \in \Uc$ with $\|u\|\leq r$ and all $x_0 \in \mathcal X^n$ with $V(x_0)\le r$, the inequality 
					\eqref{eq:V-UAG_Absch} holds.

\end{enumerate}

The system \eqref{eq:time-delay} is called \emph{UGS}, if it is $V$-UGS with $V(\phi) = \|\phi\|$, and similarly for the other stability notions.
\end{definition}
\vspace{2mm}

The UGS property has already been used in the  literature on delay systems \cite{MiW17e,MIWICHABR24super}. The ULIM notion shares some similarities with the more classical LIM property \cite{SoW96,MiP20}, with the difference that here the maximal time needed for \eqref{eq:ULIM_ISS_section} to hold is required to be uniform on bounded balls of both initial states and inputs. Note that in $V$-UAG property we are checking the property \eqref{eq:V-UAG_Absch} for inputs of a bounded magnitude, whereas for $V$-GUAG property we require the validity of \eqref{eq:V-UAG_Absch} for all inputs.   
 The following result can be shown analogously to Proposition~\ref{prop:V-ISS_and_ISS}.

\vspace{2mm}
\begin{proposition}[$V$-UGS $\Rightarrow$ UGS]
\label{prop:V-UGS_and_UGS} 
Given an LKF candidate $V:\ \mathcal X^n\to\R_+$, if \eqref{eq:time-delay} is $V$-UGS, then it is UGS. 
\end{proposition}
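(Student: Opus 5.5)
We follow the scheme of the proof of Proposition~\ref{prop:V-ISS_and_ISS}: first turn the $V$-UGS estimate into a pointwise bound on $|x(t)|$, and then convert that into a bound on the history norm $\|x_t\|$.

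\emph{Step 1: pointwise bound.} Assume \eqref{eq:V-GSAbschaetzung} holds for all $x_0\in\X$, $u\in\Uc$ and $t\geq 0$. The sandwich bounds \eqref{eq:Sandwich} give $\psi_1(|\phi(0)|)\leq V(\phi)\leq \psi_2(\|\phi\|)$. Applying them with $\phi=x_t(x_0,u)$ and $\phi=x_0$ yields, for all $t\ge 0$,
\begin{align*}
|x(t,x_0,u)| &\leq \psi_1^{-1}\bigl(\sigma(\psi_2(\|x_0\|)) + \gamma(\|u\|)\bigr)\\
&\leq \psi_1^{-1}\bigl(2\sigma(\psi_2(\|x_0\|))\bigr) + \psi_1^{-1}\bigl(2\gamma(\|u\|)\bigr).
\end{align*}
The second line uses $\psi_1^{-1}(a+b)\leq \psi_1^{-1}(2a)+\psi_1^{-1}(2b)$, which holds because $\psi_1^{-1}$ is increasing and $a+b\leq 2\max\{a,b\}$. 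Set $\tilde\sigma:=\psi_1^{-1}\circ 2\sigma\circ\psi_2$ and $\tilde\gamma:=\psi_1^{-1}\circ 2\gamma$. Both are $\Kinf$-functions, since compositions of $\Kinf$-functions are again in $\Kinf$. Hence $|x(t,x_0,u)|\le \tilde\sigma(\|x_0\|)+\tilde\gamma(\|u\|)$ for all $t\geq 0$.

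\emph{Step 2: passing to the history norm.} For $t\geq 0$, the history $x_t$ samples $x$ on $[t-\theta,t]$. On the part of this window lying in $[0,t]$, Step~1 applies directly. On the part lying in $[-\theta,0]$, if any, the values of $x$ are values of $x_0$ and are therefore bounded by $\|x_0\|$. Consequently
\[
\|x_t(x_0,u)\|\le \max\{\|x_0\|,\ \tilde\sigma(\|x_0\|)+\tilde\gamma(\|u\|)\}\le \hat\sigma(\|x_0\|)+\tilde\gamma(\|u\|),
\]
with $\hat\sigma(r):=\max\{r,\tilde\sigma(r)\}\in\Kinf$. This is exactly the $V$-UGS estimate \eqref{eq:V-GSAbschaetzung} for $V=\|\cdot\|$, i.e. UGS.

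\emph{Remarks.} Global existence of solutions is part of the $V$-UGS definition, and in any case follows from BIC together with the bound above. The only point requiring some care is Step~2. Unlike in Proposition~\ref{prop:V-ISS_and_ISS}, no decay in time is involved here, so the argument is simpler: the maximum with the identity handles the initial segment of the window.
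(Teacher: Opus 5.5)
Your proof is correct and takes the route the paper indicates when it says the result follows ``analogously to Proposition~\ref{prop:V-ISS_and_ISS}'': the sandwich bounds turn the $V$-UGS estimate into a pointwise bound on $|x(t,x_0,u)|$, as in i)$\Rightarrow$ii)$\Rightarrow$iii) there, and your window argument, where $\max\{r,\tilde\sigma(r)\}$ absorbs the initial segment $[t-\theta,0]$, mirrors iii)$\Rightarrow$iv). Since no decay in $t$ is involved, no shift by $\theta$ is needed, and your $\hat\sigma,\tilde\gamma\in\Kinf$ yield exactly the UGS estimate.
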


Recall that by Assumption~\ref{ass:0-is-an-equilibrium}, $0\in \X$ is an equilibrium of the undisturbed system. Two more concepts will be important for the characterization of $V$-ISS.
\begin{definition}[$V$-CEP]
\label{def:V_CEP}
Given an LKF candidate $V:\X\to\R_+$, we say that
\emph{$\phi$ is $V$-continuous at the equilibrium} of \eqref{eq:time-delay} if for every $\eps,h >0$ there exists a $\delta =
          \delta (\eps,h)>0$, such that for all $x_0 \in \mathcal X^n$ with $V(x_0) \leq \delta$ and all $u \in\Uc$ with $\|u\|\leq \delta$ the solution $t\mapsto x_{t}(x_0,u)$ is defined at least over $[0,h]$, and
\begin{eqnarray}
\hspace{-7mm} t\in[0,h],\ V(x_0) \leq \delta,\ \|u\| \leq \delta \; \Rightarrow \;  V\big(x_t(x_0,u)\big) \leq \eps.
\label{eq:RobEqPoint}
\end{eqnarray}
In this case, we will also say that the system has the \emph{$V$-CEP property}.
\end{definition}

\begin{definition}[$V$-BRS]
\label{def:BRS}
\index{bounded reachability sets}
\index{BRS}
Given an LKF candidate $V:\mathcal X^n\to\R_+$, we say that \eqref{eq:time-delay} has \emph{$V$-bounded reachability sets (is $V$-BRS)}, if for any $C>0$ and any $\tau>0$ it holds that 
\[
\sup\big\{
V(x_t(x_0,u)) : V(x_0)\leq C,\ \|u\| \leq C,\ t \in [0,\tau]\big\} < \infty.
\]
\end{definition}

Here again, $V$-BRS constitutes the natural extension of the BRS property. It is worth recalling that, unlike for ordinary differential equations \cite{LSW96}, BRS is not implied by forward completeness for delay systems \cite{mancilla2024forward}.

\begin{remark}
    If $V$ is a coercive LKF candidate, then there is no distinction between the various stability concepts in the $V$-version and the classical variants defined using the norm on $\X$. More generally, for LKF candidates $V_1,V_2$ we have that $V_1$-properties are equivalent to $V_2$-properties, if there are $\psi_1,\psi_2\in \Kinf$ such that $\psi_1 \circ V_1 \leq V_2 \leq \psi_2\circ V_1$. On the other hand, if $V$ is not coercive, then we have already seen in Proposition~\ref{prop:V-ISS_and_ISS} that $V$-ISS implies ISS and similarly, it is not hard to show that the $V$-properties imply the standard norm properties, but the converse direction is usually false.
\end{remark}

The main result of this section is the characterization of $V$-ISS as a combination of the above stability and attractivity properties.

\begin{theorem}[$V$-ISS superposition theorem]
\label{thm:ISS-Superposition-theorem-with-V}
Given an LKF candidate $V:\ \mathcal X^n\to\R_+$, the following statements are equivalent for system \eqref{eq:time-delay} 

\begin{enumerate}[label=(\roman*)]
    \item\label{itm:ISS-Characterization-bounded-properties-1}  $V$-ISS.
    \item\label{itm:ISS-Characterization-bounded-properties-2}  $V$-GUAG and $V$-UGS.
    \item\label{itm:ISS-Characterization-bounded-properties-3}  $V$-UAG and $V$-UGS.
	  \item\label{itm:ISS-Characterization-bounded-properties-4}  $V$-UAG, $V$-CEP and $V$-BRS.
    \item\label{itm:ISS-Characterization-bounded-properties-5}  $V$-ULIM, $V$-ULS and $V$-BRS.
    \item\label{itm:ISS-Characterization-bounded-properties-6}  $V$-ULIM and $V$-UGS.
\end{enumerate}
\end{theorem}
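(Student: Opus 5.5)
We prove the cycle of implications (i) $\Rightarrow$ (ii) $\Rightarrow$ (iii) $\Rightarrow$ (iv) $\Rightarrow$ (v) $\Rightarrow$ (vi) $\Rightarrow$ (i), following the scheme of the classical ISS superposition theorem for abstract control systems \cite{MiW18b}. We replace the state norm by $V$ throughout and use only properties of the flow (cocycle property, causality, BIC), never the linear structure of $\X$. This is what makes the transfer legitimate: every place where the classical proof uses $\|x\|$, we use $V(x)$ instead, and the only properties of $V$ we invoke are continuity, the sandwich bounds \eqref{eq:Sandwich}, and $V(0)=0$.

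\textbf{The easy implications.}
(i) $\Rightarrow$ (ii) follows directly from \eqref{eq:V-iss_sum}. Taking $\sigma(s):=\beta(s,0)$ gives $V$-UGS. For $V$-GUAG, given $\eps,r$, we choose $\tau$ with $\beta(r,\tau)\le\eps$.

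(ii) $\Rightarrow$ (iii) is trivial.

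(iii) $\Rightarrow$ (iv): $V$-UGS gives $V$-BRS immediately. It also gives $V$-CEP, since we can choose $\delta$ with $\sigma(\delta)+\gamma(\delta)\le\eps$; existence on $[0,h]$ comes from boundedness via \eqref{eq:Sandwich} and BIC.

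(v) $\Rightarrow$ (vi) and (iv) $\Rightarrow$ (v) both rest on the same mechanism. First, $V$-UAG trivially implies $V$-ULIM. Second, $V$-CEP implies $V$-ULS: combine $V$-CEP on the finite horizon $[0,\tau]$ with $V$-UAG afterwards, and convert the resulting "$\eps$–$\delta$" statement into $\Kinf$ bounds by the standard construction of a $\K$-function lying above a nondecreasing function that vanishes at $0$.

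For upgrading $V$-ULS with $V$-BRS and $V$-ULIM to $V$-UGS, take $V(x_0)\le r$ and $\|u\|\le r$. $V$-ULIM produces a time $t\le\tau(\eps,r)$ at which $V(x_t)\le\eps+\gamma(\|u\|)$. Before that time, $V$-BRS bounds $V(x_s)$ uniformly by some $\mu(r)$. After that time, we restart using the cocycle property with the shifted input. If $\eps+\gamma(\|u\|)$ lies below the ULS radius, ULS bounds the tail. For large $\|u\|$, we first show $V$-UGB by an iterated restart argument, and then localize near zero with ULS. Finally, we assemble a $\Kinf$ bound $\sigma(V(x_0))+\gamma(\|u\|)$ by taking a $\Kinf$ majorant of $\max\{\mu(r),\ldots\}$.

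\textbf{The main step: (vi) $\Rightarrow$ (i).}
Following \cite[proof of ULIM$\wedge$UGS$\Rightarrow$UAG]{MiW18b}, we first show $V$-UAG (in fact $V$-GUAG) with gain $\tilde\gamma:=\sigma\circ 2\gamma_{\rm lim}+\gamma_{\rm UGS}$, possibly enlarged.

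Fix $\eps,r$ and choose $\eps'$ with $\sigma(2\eps')\le\eps/2$. By $V$-ULIM, for $V(x_0)\le r$ and $\|u\|\le r$ there is $t\le\tau(\eps',r)$ with $V(x_t)\le\eps'+\gamma(\|u\|)$. Applying $V$-UGS from time $t$ via the cocycle property gives
\[
V(x_s)\le\sigma\bigl(\eps'+\gamma(\|u\|)\bigr)+\gamma_{\rm UGS}(\|u\|)\le\sigma(2\eps')+\sigma(2\gamma(\|u\|))+\gamma_{\rm UGS}(\|u\|)
\]
for all $s\ge t$. This yields $V$-UAG with gain $\tilde\gamma$.

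The remaining obstacle is the restriction $\|u\|\le r$, which we remove by causality. If $\|u\|>r\ge V(x_0)$, then $V$-UGS already gives $V(x_s)\le\sigma(\|u\|)+\gamma(\|u\|)$, so enlarging the gain to include $\sigma$ removes the bound on $\|u\|$ and yields $V$-GUAG.

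Finally, $V$-GUAG together with $V$-UGS yields the $\KL$ estimate by the standard construction (as in \cite{SoW96,MiW18b}). Define $\tau(\eps,r)$, make it monotone in both arguments, and build $\beta$ from the inverse of $\eps\mapsto\tau(\eps,r)$, capped by $\sigma(r)$ on short times. The bound is additive in $\gamma(\|u\|)$ because both $V$-GUAG and $V$-UGS are. The main care point is ensuring the constructed $\beta$ is continuous and strictly decreasing, which we handle with the usual averaging trick.
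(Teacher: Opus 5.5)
Your proposal follows the paper's proof essentially step for step. It uses the same cycle, with (iv)$\Rightarrow$(v) via $V$-UAG$\wedge$$V$-CEP$\Rightarrow$$V$-ULS, then (v)$\Rightarrow$(vi) via $V$-ULIM$\wedge$$V$-BRS$\Rightarrow$$V$-UGB followed by $V$-UGB$\wedge$$V$-ULS$\Rightarrow$$V$-UGS, and finally (vi)$\Rightarrow$(i) via $V$-UAG, then $V$-GUAG (using $V$-UGS when $\|u\|>r$), then the $\KL$ construction. The one step you leave schematic is the ``iterated restart'' giving $V$-UGB. Note that a naive restart can stall because ULIM may return $t=0$; the paper rules this out with a last-exit-time argument, restricting to $\|u\|\le\gamma^{-1}(r/4)$ so that ULIM returns to level $3r/4<r$ within a uniform time.
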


 It is worth noting that, for the particular case that $V(\phi) = \|\phi\|$ for all $\phi \in \mathcal X^n$, Theorem~\ref{thm:ISS-Superposition-theorem-with-V} reduces to the ISS superposition theorem for general control systems proved in \cite{MiW18b}.

The proof is based on a series of technical lemmas, which we state and prove in Section \ref{sec:Appendix: ISS superposition theorems}.

\vspace{2mm}
\begin{proof}
\ref{itm:ISS-Characterization-bounded-properties-1} $\Rightarrow$ \ref{itm:ISS-Characterization-bounded-properties-2}.
Since $\beta(V(x_0),t) \leq \beta(V(x_0),0)$ for all $x_0\in \mathcal X^n$ and all $t\geq 0$, we see that $V$-ISS implies $V$-UGS. The $V$-GUAG property follows from Lemma~\ref{ISS_implies_UAG}.

\ref{itm:ISS-Characterization-bounded-properties-2} $\Rightarrow$ \ref{itm:ISS-Characterization-bounded-properties-3} $\Rightarrow$ \ref{itm:ISS-Characterization-bounded-properties-4}. These are immediate consequences of the definitions.

\ref{itm:ISS-Characterization-bounded-properties-4} $\Rightarrow$ \ref{itm:ISS-Characterization-bounded-properties-5}. It is immediate from the definition that $V$-UAG implies $V$-ULIM.
The combination $V$-UAG $\wedge$ $V$-CEP implies $V$-ULS by Lemma~\ref{UAG-ULS}.

\ref{itm:ISS-Characterization-bounded-properties-5} $\Rightarrow$ \ref{itm:ISS-Characterization-bounded-properties-6}.
By Proposition~\ref{prop:ULIM_plus_mildRFC_implies_UGB}, $V$-ULIM $\wedge$ $V$-BRS implies $V$-UGB.
$V$-UGB $\wedge$ $V$-ULS implies $V$-UGS by Lemma~\ref{lem:LS_plus_UGB_equals_GS}.

\ref{itm:ISS-Characterization-bounded-properties-6} $\Rightarrow$ \ref{itm:ISS-Characterization-bounded-properties-1}.
$V$-ULIM $\wedge$ $V$-UGS implies $V$-UAG by Lemma~\ref{lem:ULIM_plus_GS_implies_UAG}.
$V$-UAG $\wedge$ $V$-UGS implies $V$-GUAG by Lemma~\ref{lem:UGS_und_bUAG_imply_UAG}.
$V$-GUAG $\wedge$ $V$-UGS implies $V$-ISS by Lemma~\ref{lem:UAG_implies_ISS}.
\end{proof}

\subsection{\texorpdfstring{Lyapunov-Krasovskii condition for $V$-UGS}{Lyapunov-Krasovskii condition for V-UGS}}\label{sec:LKF:UGS}

The next result states that the existence of a UGS LKF $V$, as introduced in Definition \ref{def:noncoercive_UGS_LF}, guarantees $V$-UGS.

\vspace{2mm}
\begin{proposition}[LKF condition for $V$-UGS]
\label{prop:UGS_LK_theorem} 
If  \eqref{eq:time-delay} admits a UGS LKF $V$ then \eqref{eq:time-delay} is $V$-UGS (and thus UGS).
\end{proposition}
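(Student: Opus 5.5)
The plan is to show that along any solution, $V$ can never rise above the level $\max\{V(x_0),\chi(\|u\|)\}$. This gives $V$-UGS with $\sigma=\mathrm{id}$ and $\gamma=\chi$. UGS then follows from Proposition~\ref{prop:V-UGS_and_UGS}.

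Fix $x_0\in\X$ and $u\in\Uc$, and let $[0,T)$ be the maximal interval of existence. Set $c:=\max\{V(x_0),\chi(\|u\|)\}$ and $y(t):=V(x_t(x_0,u))$. The map $y$ is continuous, since $t\mapsto x_t$ is continuous into $\X$ and $V$ is continuous.

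\textbf{Step 1: relate the Dini derivative of $y$ to $\dot V$.} For $t\in[0,T)$, the cocycle property gives $x_{t+h}(x_0,u)=x_h(x_t(x_0,u),u(t+\cdot))$. Hence the upper right Dini derivative $D^+y(t)$ equals $\dot V_{u(t+\cdot)}(x_t)$. Because $\|u(t+\cdot)\|\le\|u\|$, whenever $y(t)\ge\chi(\|u\|)$ we have $V(x_t)\ge\chi(\|u(t+\cdot)\|)$. Condition \eqref{DissipationIneq_UGS_LK_functionals} then gives $D^+y(t)\le0$.

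\textbf{Step 2: comparison argument.} This is the main obstacle, because we only have $D^+y\le 0$ on the set $\{y\ge\chi(\|u\|)\}$ and not everywhere. Suppose $y(t_1)>c$ for some $t_1<T$. Let $t_0:=\sup\{s\le t_1: y(s)\le c\}$. Then $t_0<t_1$, $y(t_0)=c$ by continuity, and $y>c\ge\chi(\|u\|)$ on $(t_0,t_1]$. So $D^+y\le 0$ on $(t_0,t_1]$.

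A continuous function with $D^+y\le0$ on an interval is nonincreasing there. This is the classical monotonicity lemma; if needed, I would prove it directly by applying the argument to $y(s)-\eps s$ and letting $\eps\to0$. It gives $y(t_1)\le y(s)$ for all $s\in(t_0,t_1]$. Letting $s\to t_0^+$ yields $y(t_1)\le c$, a contradiction. Therefore $y(t)\le V(x_0)+\chi(\|u\|)$ for all $t\in[0,T)$.

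\textbf{Step 3: global existence.} By the sandwich bound \eqref{eq:Sandwich}, $|x(t)|\le\psi_1^{-1}(V(x_0)+\chi(\|u\|))$ on $[0,T)$, and $x$ is also bounded on $[-\theta,0]$ by $\|x_0\|$. So the solution is bounded, and the BIC property gives $T=\infty$. This establishes \eqref{eq:V-GSAbschaetzung} on $\R_+$ with $\sigma=\mathrm{id}$ (perturbed to a $\Kinf$ function if one insists) and $\gamma=\chi$.

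The delicate points are the identification of $D^+y$ with $\dot V$ at shifted inputs and the handling of the input norm under restriction, which Step 1 addresses. When $\chi\equiv0$, the same argument gives the zero-gain version used in Corollary~\ref{cor:Krasovskii-Theorem-zero-gain}.
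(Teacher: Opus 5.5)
Your proof is correct and follows essentially the same route as the paper: both show $V(x_t(x_0,u))\le\max\{V(x_0),\chi(\|u\|)\}$ by a last-crossing-time contradiction argument, using that a continuous function with nonpositive upper Dini derivative is nonincreasing, and then invoke the BIC property to get global existence. Your version merges the paper's two cases (according to whether $V(x_0)\le\chi(\|u\|)$) into a single argument at the level $c$, and it makes explicit two steps the paper uses only implicitly: the cocycle identification $D^+y(t)=\dot V_{u(t+\cdot)}(x_t)$ and the bound $\|u(t+\cdot)\|\le\|u\|$.
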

\vspace{2mm}

\begin{proof}
By assumption \eqref{DissipationIneq_UGS_LK_functionals}, there exists $\chi\in\Kinf$ such that
\begin{equation}\label{DissipationIneq_UGS_LK_functionals_bis} 
V(\phi) \geq \chi(\|u\|) \qrq \dot{V}_u(\phi) \leq 0.
\end{equation}
for all $\phi\in \mathcal X^n$ and all $u\in\Uc$. Fix $x_0\in {\mathcal X^n}$ and $u\in\Uc$. Then the maximal solution $x(\cdot,x_0,u)$ of \eqref{eq:time-delay} exists on some interval $[-\theta,t_m(x_0,u))$ with $t_m(x_0,u)\in (0,+\infty]$. We consider two cases, whether or not $V(x_0) \leq \chi(\|u\|)$. 
First let $V(x_0) \leq \chi(\|u\|)$. 
Seeking a contradiction, assume that there is a time $t_2\in (0,t_m(x_0,u))$ such that 
$V(x_{t_2}(x_0,u)) > \chi(\|u\|)$. Let $t_1$ be the maximal time $t\in [0,t_2)$ such that 
$V(x_{t}(x_0,u)) = \chi(\|u\|)$, which exists by continuity of solutions. 
Due to the continuity of solutions, $V(x_{t}(x_0,u)) > \chi(\|u\|)$ for all $t\in(t_1,t_2)$, and hence
it holds from \eqref{DissipationIneq_UGS_LK_functionals_bis} that 
\begin{equation}
\label{eq:Vdecay-1}    
\dot{V}_{u(t+\cdot)}(x_t(x_0,u)) \leq 0,\quad \forall t\in(t_1,t_2),
\end{equation}
and thus $V(x_t(x_0,u))\leq V(x_{t_1}(x_0,u))= \chi(\|u\|)$ for all $t\in(t_1,t_2)$, a contradiction.
We conclude for the case $V(x_0) \leq \chi(\|u\|)$ that
\begin{eqnarray}
V(x_t(x_0,u))\leq \chi(\|u\|), \quad \forall t \in[0,t_m(x_0,u)).
\label{eq:UGS_LK_fun_3}
\end{eqnarray}
We now proceed to the second case, namely when $V(x_0) > \chi(\|u\|)$. Then either $V(x_t(x_0,u)) > \chi(\|u\|)$ for all $t\in[0,t_m(x_0,u))$, or there is some minimal time $t_3 >0$ so that $V(x_{t_3}(x_0,u)) = \chi(\|u\|)\leq V(x_0)$.
Arguing as in \eqref{eq:Vdecay-1}, we see that for $t\in [0,t_m(x_0,u))$ resp. $t\in [0,t_3)$
\begin{align}
V(x_t(x_0,u)) \leq V(x_0).
\label{eq:UGS_LK_fun_4}
\end{align}
In the second case we have for $t>t_3$ by the cocycle property and the arguments leading to \eqref{eq:UGS_LK_fun_3} that, for all $t\in[t_3,t_m(x_0,u))$,
\begin{eqnarray*}
\hspace{-4mm}V(x_t(x_0,u)) = V(x_{t-t_3}(x_{t_3}(x_0,u),u(t_3+\cdot))) \leq \chi(\|u\|).
\end{eqnarray*}
We conclude from \eqref{eq:UGS_LK_fun_3} and \eqref{eq:UGS_LK_fun_4} that, in all cases,
\begin{eqnarray*}
\hspace{-7mm}V(x_t(x_0,u)) \leq \max\{V(x_0), \chi(\|u\|)\},\quad \forall t\in[0,t_m(x_0,u)).
\end{eqnarray*}
Since \eqref{eq:time-delay} satisfies the BIC property (see, e.g.,  \cite[Theorem 2]{CKP23}),
the above inequality ensures that $t_m(x_0,u)=+\infty$, and thus this estimate holds for all $t \geq 0$, and $V$-UGS follows. UGS is then a consequence of  Proposition~\ref{prop:V-UGS_and_V-ISS}.
\end{proof}

\subsection{Bounds on solutions}

We finally present some technical results providing bounds on the solutions (and on their derivative) of a $V$-UGS system.

While the definition of $V$-UGS provides an upper bound on $V(x_t)$, a bound on the whole history norm can be obtained after one full delay period.

\vspace{2mm}
\begin{proposition}[Bounds on history]
\label{prop:V-UGS_and_V-ISS} 
Given an LKF candidate $V:\mathcal X^n\to\R_+$, if \eqref{eq:time-delay} is $V$-UGS then there are $\sigma,\gamma \in\Kinf$ such that, for all $x_0 \in {\mathcal X^n}$ and all $u\in\Uc$,
\begin {equation}
\label{eq:V-to-phi-UGS}
\|x_t(x_0,u)\| \leq \sigma(V(x_0)) + \gamma( \|u\|)\quad \forall t\geq \theta.
\end{equation}
\end{proposition}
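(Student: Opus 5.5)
The plan is to combine the $V$-UGS estimate with the lower sandwich bound in \eqref{eq:Sandwich}, which only controls the current value $|\phi(0)|$, and then to pass from pointwise bounds to history bounds by waiting one delay period. This is the same device used in the proof of Proposition~\ref{prop:V-ISS_and_ISS}.

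\emph{Pointwise bound.} By $V$-UGS there are $\sigma_0,\gamma_0\in\Kinf$ with
\[
V(x_s(x_0,u))\le \sigma_0(V(x_0))+\gamma_0(\|u\|)\quad\text{for all } s\ge 0.
\]
Since $x_s(x_0,u)(0)=x(s,x_0,u)$, the sandwich inequality \eqref{eq:Sandwich} gives
\[
\psi_1(|x(s,x_0,u)|)\le \sigma_0(V(x_0))+\gamma_0(\|u\|).
\]
Using $\psi_1^{-1}(a+b)\le \psi_1^{-1}(2a)+\psi_1^{-1}(2b)$, it follows that
\[
|x(s,x_0,u)|\le \psi_1^{-1}\circ 2\sigma_0(V(x_0))+\psi_1^{-1}\circ2\gamma_0(\|u\|)\quad\text{for all } s\ge0.
\]

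\emph{History bound.} Fix $t\ge\theta$. Every $t+\tau$ with $\tau\in[-\theta,0]$ satisfies $t+\tau\ge0$, so the history $x_t(x_0,u)$ consists only of values $x(s,x_0,u)$ with $s\ge0$; none of the initial data $x_0$ on $[-\theta,0)$ enters. Taking the maximum over $\tau$ yields \eqref{eq:V-to-phi-UGS} with
\[
\sigma:=\psi_1^{-1}\circ2\sigma_0,\qquad \gamma:=\psi_1^{-1}\circ2\gamma_0,
\]
both of which lie in $\Kinf$.

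The only delicate point is the restriction $t\ge\theta$. For $t<\theta$ the history still contains part of $x_0$, and $\|x_0\|$ cannot be bounded by $V(x_0)$ unless $V$ is coercive. This is precisely why the statement is restricted to $t\ge\theta$. Global existence of solutions is already contained in the $V$-UGS assumption, by the tacit convention stated after the definition of $V$-ISS, so no further argument is needed.
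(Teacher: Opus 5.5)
Your proof is correct and follows the paper's own argument essentially step for step. It combines the $V$-UGS bound with the lower sandwich estimate $\psi_1(|\phi(0)|)\le V(\phi)$, splits via $\psi_1^{-1}(a+b)\le\psi_1^{-1}(2a)+\psi_1^{-1}(2b)$, and takes the maximum over $\tau\in[-\theta,0]$ for $t\ge\theta$, arriving at the same gains $\sigma=\psi_1^{-1}\circ 2\sigma_0$ and $\gamma=\psi_1^{-1}\circ 2\gamma_0$.
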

\vspace{2mm}

\begin{proof}
Since $V$ is an LKF candidate and \eqref{eq:time-delay} is $V$-UGS, we may apply \eqref{eq:V-GSAbschaetzung} and \eqref{eq:Sandwich} to conclude that there are $\psi_1 ,\tilde\sigma,\tilde\gamma\in\Kinf$ so that, for all $x_0 \in {\mathcal X^n}$ and $u\in\Uc$, 
\begin{eqnarray*}
\psi_1(|x(t,x_0,u)|) \leq \tilde\sigma(V(x_0)) + \tilde\gamma( \|u\|)\quad\forall t\geq 0.
\end{eqnarray*}
As $\psi_1^{-1}(a+b) \leq \psi_1^{-1}(2a) + \psi_1^{-1}(2b)$, $ a,b\geq 0$, we have
\begin{eqnarray*}
|x(t,x_0,u)| \leq \psi^{-1}_1\big(2\tilde\sigma(V(x_0))\big) + \psi^{-1}_1\big(2\tilde\gamma( \|u\|)\big).
\end{eqnarray*}
Consequently, for all $t\geq \theta$,
\begin{align*}
	\|x_t(x_0,u)\| =  \max_{\tau \in [-\theta,0]}|x(t+\tau,x_0,u)| \leq	 \sigma(V(x_0)) +\gamma(\left\|u\right\|),
\end{align*}
with $\sigma:=\psi_1^{-1}\circ 2\tilde\sigma$ and $\gamma:=\psi_1^{-1}\circ 2\tilde\gamma$.
\end{proof}

\vspace{2mm}
$V$-UGS also provides a bound on the solutions' derivative after a full delay period. To establish this fact, we first make the following observation.

\vspace{2mm}
\begin{proposition}[Bounds on the vector field]
\label{prop:Bounds_on_f}
If Assumptions~\ref{ass:Regularity-of-f} and \ref{ass:0-is-an-equilibrium} hold, then $f$ is \emph{$\K$-bounded}, i.e., there exist $\xi_1,\xi_2 \in\K$ such that
\begin{eqnarray}
|f(\phi,v)| \leq \xi_1(\|\phi\|) + \xi_2(|v|)\quad \forall\phi\in {\mathcal X^n},\,v\in\R^m.
\label{eq:estimate_f}
\end{eqnarray}
\end{proposition}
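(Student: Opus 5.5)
The plan is to build a nondecreasing majorant of $|f|$ and split it into a state part and an input part. First define, for $r\geq 0$,
\[
\eta(r) := \sup\{|f(\phi,v)| : \|\phi\|\leq r,\ |v|\leq r\}.
\]
Finiteness of $\eta(r)$ is the main point. Assumption~\ref{ass:Regularity-of-f}(i) gives, for $\|\phi\|\leq r$ and $|v|\leq r$ (with $r\geq 1$ say, monotone in the constant),
\[
|f(\phi,v)| \leq |f(\phi,v)-f(0,v)| + |f(0,v)| \leq L_f(r)\, r + \sup_{|v|\leq r}|f(0,v)|.
\]
The last supremum is finite by joint continuity (Assumption~\ref{ass:Regularity-of-f}(ii)), since $v\mapsto f(0,v)$ is continuous on the compact ball $\{|v|\leq r\}\subset\R^m$. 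This is exactly why the Lipschitz estimate in $\phi$ is needed: the ball in $\X$ is not compact, so continuity alone would not give boundedness.

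Next I check that $\eta$ is nondecreasing and satisfies $\eta(0)=|f(0,0)|=0$ by Assumption~\ref{ass:0-is-an-equilibrium}. I also need $\eta(r)\to 0$ as $r\to 0^+$. The same estimate gives $|f(\phi,v)| \leq L_f(1)\, r + |f(0,v)|$ for $r\leq 1$, and $|f(0,v)|\to 0$ as $v\to 0$ by continuity together with $f(0,0)=0$. So $\eta$ is nondecreasing, finite, and continuous at $0$ with value $0$.

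Then I dominate $\eta$ by a $\K$ function. A standard construction is
\[
\tilde\eta(r) := \frac{1}{r}\int_r^{2r}\eta(s)\,ds + r \quad (r>0), \qquad \tilde\eta(0):=0.
\]
This function is continuous and strictly increasing, satisfies $\tilde\eta\geq\eta$ by monotonicity of $\eta$, and tends to $0$ at $0$. Continuity of $\tilde\eta$ uses the local boundedness and monotonicity of $\eta$; this is the only mildly technical step, and it is routine.

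Finally, with $\|\phi\|\leq \max\{\|\phi\|,|v|\}=:r$ and $|v|\leq r$, we get
\[
|f(\phi,v)|\leq \tilde\eta(\max\{\|\phi\|,|v|\}) \leq \tilde\eta(\|\phi\|)+\tilde\eta(|v|),
\]
so setting $\xi_1=\xi_2:=\tilde\eta\in\K$ proves the claim.
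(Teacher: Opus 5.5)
Your proof is correct and is essentially the paper's argument. Both rest on the split $|f(\phi,v)|\le |f(0,v)|+|f(\phi,v)-f(0,v)|$: the Lipschitz bound controls the non-compact ball in $\mathcal X^n$, continuity of $f(0,\cdot)$ on compact balls of $\R^m$ controls the input part, and a function of $\max\{\|\phi\|,|v|\}$ is then split into a sum. The difference is only in packaging. You bundle everything into one radial majorant $\eta$ and regularize it by explicit averaging, whereas the paper majorizes $\xi(|v|)$ and $r\mapsto L(r)r$ separately, tacitly taking the Lipschitz constant to be continuous and increasing. Your version is therefore slightly more self-contained.
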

\vspace{2mm}

\begin{proof}
Fix $\phi\in {\mathcal X^n}$ and $v \in\R^m$.
Due to Lipschitz continuity of $f$ on bounded balls w.r.t. the first argument, there is a strictly increasing continuous function $L$ (characterizing a Lipschitz constant of $f$), so that 
\begin{align*}
|f(\phi,v)| &\leq |f(0,v)| + |f(\phi,v) - f(0,v)|\\
&\leq \xi(|v|) + L(\max\{\|\phi\|,|v|\})\|\phi\|,
\end{align*}
where $\xi(s):=\max_{|v|\leq s}|f(0,v)|$ for $s\geq 0$. As $f(0,0) = 0$ by Assumption \ref{ass:0-is-an-equilibrium}, it holds that $\xi(0)=0$. Since $f$ is continuous in its second argument, it also holds that $\xi$ is a continuous nondecreasing function: see \cite[Lemma A.27]{Mir23}, and hence it can be upper-bounded by a $\Kinf$-function.
Furthermore, we have
\begin{align*}
|f(\phi,v)|&\leq \xi(|v|) + L(\max\{\|\phi\|,|v|\})\max\{\|\phi\|,|v|\}\nonumber\\
&\leq \xi(|v|)+L(\|\phi\|)\|\phi\| + L(|v|)|v|.
\end{align*}
 The function $L(\cdot)$ can be majorized by a continuous increasing function, and thus $r \mapsto L(r)r$ can be majorized by a $\Kinf$ function. These considerations establish the claim.    
\end{proof}

Based on Proposition~\ref{prop:Bounds_on_f}, we have the following.

\begin{lemma}[Bounds on solution derivatives]
\label{lem:Global_Bounds_on_derivative-V-UGS}
Given an LKF candidate $V:\mathcal X^n\to\R_+$, assume that \eqref{eq:time-delay} is $V$-UGS, and let Assumptions~\ref{ass:Regularity-of-f} and \ref{ass:0-is-an-equilibrium} hold. 
Then there exist $\mu_1, \mu_2\in\Kinf$ such that, for all $x_0\in {\mathcal X^n}$ and all $u\in\Uc$,
\begin{align}
\label{eq:Global_Bounds_on_derivative-V}
|\dot x(t,x_0,u)|\leq \mu_1(V(x_0)) + \mu_2(\|u\|)\quad \text{ a.e. } t\geq \theta.
\end{align}
\end{lemma}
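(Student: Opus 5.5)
The plan is to combine Proposition~\ref{prop:V-UGS_and_V-ISS} with Proposition~\ref{prop:Bounds_on_f}. Since $x(\cdot,x_0,u)$ is a Carath\'eodory solution of \eqref{eq:time-delay}, it is absolutely continuous on $[0,\infty)$. Hence for almost every $t\geq 0$ we have
\[
\dot x(t,x_0,u)=f(x_t(x_0,u),u(t)).
\]
Solutions are defined on all of $\R_+$ because $V$-UGS gives boundedness, and the BIC property then applies. We also discard the null set where $|u(t)|>\|u\|$. So for almost every $t\geq\theta$ both the equation above and $|u(t)|\leq\|u\|$ hold simultaneously.

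At such times we first apply \eqref{eq:estimate_f}. This gives
\[
|\dot x(t,x_0,u)|\leq \xi_1(\|x_t(x_0,u)\|)+\xi_2(|u(t)|)\leq \xi_1(\|x_t(x_0,u)\|)+\xi_2(\|u\|),
\]
where the second step uses that $\xi_2\in\K$ is increasing. Next, for $t\geq\theta$, Proposition~\ref{prop:V-UGS_and_V-ISS} gives $\|x_t(x_0,u)\|\leq \sigma(V(x_0))+\gamma(\|u\|)$. Since $\xi_1$ is increasing, we use the standard splitting $\xi_1(a+b)\leq \xi_1(2a)+\xi_2'(2b)$, or more precisely $\xi_1(a+b)\leq\xi_1(2a)+\xi_1(2b)$. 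This yields
\[
|\dot x(t,x_0,u)|\leq \xi_1(2\sigma(V(x_0)))+\xi_1(2\gamma(\|u\|))+\xi_2(\|u\|).
\]

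Finally, we set $\mu_1(r):=\xi_1(2\sigma(r))+r$ and $\mu_2(r):=\xi_1(2\gamma(r))+\xi_2(r)+r$. Adding $r$ upgrades these $\K$-functions to $\Kinf$-functions. This gives \eqref{eq:Global_Bounds_on_derivative-V}.

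The only mildly delicate point is the ``almost everywhere'' bookkeeping. One has to make sure that the derivative identity holds a.e. and that the essential-supremum bound on $u(t)$ holds a.e., so that both are valid on a common full-measure subset of $[\theta,\infty)$. The remaining steps are routine manipulations of comparison functions.
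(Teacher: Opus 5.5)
Your proposal is correct and follows the paper's proof: you combine the history bound of Proposition~\ref{prop:V-UGS_and_V-ISS} with the $\K$-boundedness of $f$ from Proposition~\ref{prop:Bounds_on_f}, use the splitting $\xi_1(a+b)\leq\xi_1(2a)+\xi_1(2b)$, and take essentially $\mu_1=\xi_1\circ 2\sigma$ and $\mu_2=\xi_1\circ 2\gamma+\xi_2$. Your added $+r$ terms (needed because Proposition~\ref{prop:Bounds_on_f} only asserts $\xi_1,\xi_2\in\K$) and your explicit almost-everywhere bookkeeping are small refinements the paper glosses over; just delete the stray ``$\xi_2'(2b)$'' slip.
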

\vspace{2mm}

\begin{proof}
By Proposition \ref{prop:V-UGS_and_V-ISS}, there exist $\sigma_1,\sigma_2\in\Kinf$ such that, for all $x_0\in {\mathcal X^n}$ and all $u\in\Uc$,
\begin{equation*}
\left\| x_t(x_0,u) \right\| \leq \sigma_1(V(x_0)) + \sigma_2(\|u\|),\quad \forall t\geq \theta.
\end{equation*}
Let $\xi_1,\xi_2 \in\Kinf$ satisfy the assertion of Proposition \ref{prop:Bounds_on_f}.
Fix $x_0\in {\mathcal X^n}$ and $u\in\Uc$. The it follows from \eqref{eq:estimate_f} that, for almost all $t\geq \theta$,
\begin{align*}
|\dot x(t,x_0,u)| &= \big|f(x_t(x_0,u),u(t))\big|\\
&\leq \xi_1(\|x_t(x_0,u)\|) + \xi_2(|u(t)|)\\
&\leq \xi_1\big(\sigma_1(V(x_0)) + \sigma_2(\|u\|)\big) + \xi_2(\|u\|)\\
&\leq \xi_1\big(2\sigma_1(V(x_0))\big) + \xi_1\big(2\sigma_2(\|u\|)\big) + \xi_2(\|u\|)
\end{align*}
and the claim follows with $\mu_1:=\xi_1\circ 2\sigma_1$ and $\mu_2:=\xi_1\circ 2\sigma_2 + \xi_2$.
\end{proof}

\section{Proof of Theorem \ref{thm:Krasovskii-Theorem} }\label{sec:proof}

Now we can establish our main result (Theorem \ref{thm:Krasovskii-Theorem}). 
We restate it here for the sake of readability.

\vspace{2mm}
\textbf{Theorem~\ref{thm:Krasovskii-Theorem}} \emph{Let Assumption~\ref{ass:Regularity-of-f} and \ref{ass:0-is-an-equilibrium} hold. 
If there exists an LKF candidate $V:\mathcal X^n\to\R_+$ which is simultaneously an ISS LKF with pointwise dissipation (in either implication or sum form) and a UGS LKF for \eqref{eq:time-delay}, then \eqref{eq:time-delay} is $V$-ISS and, in particular, ISS.}
\vspace{2mm}

\textbf{Proof.} In view of Proposition \ref{prop-sum-imp}, it is enough to assume that $V$ is both an ISS LKF in implication form with pointwise dissipation and a UGS LKF. 
Furthermore, by Proposition~\ref{prop:Kinf-decay-rate-always-suffices-pointwise-dissipation},  we can assume that the decay rate of $V$ is a $\Kinf$-function (note that rescaled function constructed in Proposition~\ref{prop:Kinf-decay-rate-always-suffices-pointwise-dissipation} will also be a UGS-LKF for \eqref{eq:time-delay}).

The proof uses the $V$-ISS superposition theorem (Theorem~\ref{thm:ISS-Superposition-theorem-with-V}) and proceeds to show that \eqref{eq:time-delay} is both $V$-UGS and $V$-ULIM. The former is a direct consequence of Proposition~\ref{prop:UGS_LK_theorem}. For the latter, we first fix $\chi\in\Kinf$ as a Lyapunov gain as in Definitions~\ref{def:noncoercive_UGS_LF} and \ref{def:ISS_LKF-implication} (if the Lyapunov gains are different, we can define $\chi$ as the maximum of both). Also let $\psi_2\in\Kinf$ be an upper bound on $V$ as in the sandwich condition \eqref{eq:Sandwich}.
Now, seeking a contradiction, assume that the system \eqref{eq:time-delay} does not satisfy the $V$-ULIM condition \eqref{eq:ULIM_ISS_section} with the gain $\gamma:=\psi_2\circ 2\chi$ and some function $\tau=\tau(\varepsilon,r)$ to be defined later.
Hence, there are some $r,\varepsilon>0$, certain $x_0 \in {\mathcal X^n}$ with $V(x_0)\le r$, and some $u\in\Uc$ with $\|u\|\leq r$ such that 
\begin{eqnarray}
V(x_t(x_0,u)) \geq \varepsilon + \gamma(\|u\|), \quad \forall t\in[0,\tau(r,\varepsilon)].
\label{eq:Anti-ULIM}
\end{eqnarray}
By \eqref{eq:Sandwich}, it follows that 
\begin{eqnarray}
\hspace{-7mm}\psi_2(\|x_t(x_0,u)\|) \geq \varepsilon + \psi_2\circ 2\chi(\|u\|), \quad \forall t\in[0,\tau(r,\varepsilon)],
\label{eq:Anti-ULIM-2}
\end{eqnarray}
which in turn implies that
\begin{align*}
\|x_t(x_0,u)\| \geq \max\big\{\psi_2^{-1}(\varepsilon), 2\chi(\|u\|)\big\}, \quad \forall t\in[0,\tau(r,\varepsilon)].
\end{align*}
Hence, there exists an increasing finite sequence of time instants $t_k\in[0,\tau(r,\varepsilon)]$, $k\in\{0,1,\ldots, K\}$, $K\in\N$, satisfying $t_k - t_{k-1} \leq \theta$ for all such $k$, such that 
\begin{eqnarray}
|x(t_k,x_0,u)| \geq \max\big\{\psi_2^{-1}(\varepsilon), 2\chi(\|u\|)\big\}.
\end{eqnarray}
Note that
\begin{align}\label{eq-K}
K\geq \frac{\tau(\varepsilon,r)}{\theta}-1.
\end{align}
By Lemma~\ref{lem:Global_Bounds_on_derivative-V-UGS}, there exist $\mu_1,\mu_2\in\Kinf$ such that 
\begin{eqnarray}
\label{eq:Global_Bounds_on_derivative-aux}
\hspace{-7mm}|\dot x(t,x_0,u)| \!\le\! \mu_1(V(x_0)) + \mu_2(\|u\|) \leq \mu(r),\,  t\ge \theta\,\,\text{a.e.},
\end{eqnarray}
where $\mu:=\mu_1 +\mu_2$. For each $k$, consider the interval
\[
I_k:=\left[ t_k - \frac{\psi_2^{-1}(\varepsilon)}{2\mu(r)}, t_k + \frac{\psi_2^{-1}(\varepsilon)}{2\mu(r)}\right].
\]
As $\psi_2$ is an upper bound for $V$, it can be chosen arbitrarily large. Thus, we can assume that $\psi_2(s)\geq s$ for all $s\geq 0$ and that 
$\frac{\psi_2^{-1}(\varepsilon)}{\mu(r)}\leq \theta$, so that the above intervals do not overlap. In view of \eqref{eq:Global_Bounds_on_derivative-aux}, for all $k\in\{0,\ldots,K\}$, we have for all $t \in I_k$ that
\begin{align*}
|x(t,x_0,u)| \geq \max\left\{\frac{\psi_2^{-1}(\varepsilon)}{2}, 2\chi(\|u\|)- \frac{\psi_2^{-1}(\varepsilon)}{2}\right\}.
\end{align*}
Note that if $c>\max\{a,2b-a\}$ for some $a,b,c\geq 0$, then $c>\max\{a,b\}$ (consider $a>b$ and $a\le b$). It follows that
\begin{eqnarray}
|x(t,x_0,u)| \geq \max\left\{\frac{1}{2}\psi_2^{-1}(\varepsilon), \chi(\|u\|)\right\},  
\quad \forall t \in I_k.
\label{eq:Anti-ULIM-5}
\end{eqnarray}
Since $\psi_2(s)\geq s$ for all $s\ge 0$, \eqref{eq:Anti-ULIM} ensures that 
\begin{align*}
V(x_t(x_0,u)) \geq \chi(\|u\|), \quad \forall t\in[0,\tau(r,\varepsilon)].
\end{align*}
Consequently, we get from \eqref{DissipationIneq_UGS_LK_functionals} that 
\begin{eqnarray}
\dot{V}_{u(t+\cdot)}(x_t(x_0,u))\leq 0,\quad  \forall t\in[0,\tau(r,\varepsilon)].
\label{eq:Derivative-nonpositiveness}
\end{eqnarray}
Using first \cite[Lemma 3.4]{MiW19a}, and then \eqref{eq:Derivative-nonpositiveness}, it follows that
\begin{align*}
V(x_{\tau(r,\varepsilon)}(x_0,u))-V(x_0) &\le \int_0^{\tau(r,\varepsilon)} \dot{V}_{u(t+\cdot)}(x_t(x_0,u))dt\\
& \le \sum_{k=0}^{K}\int_{I_k}
 \dot{V}_{u(t+\cdot)}(x_t(x_0,u))dt.
\end{align*}
Using \eqref{eq:ISS-LK-implication-relaxed} and \eqref{eq:Anti-ULIM-5} on the intervals $I_k$, we get that
\begin{align*}
V(x_{\tau(r,\varepsilon)}(x_0,u))-V(x_0)
&\le -\sum_{k=0}^K \int_{I_k}
 \alpha (|x(t,x_0,u)|)dt\\
&\le -\sum_{k=0}^K \int_{I_k}
 \alpha\circ\frac{1}{2}\psi_2^{-1}(\varepsilon)dt\\
 &\le -(K+1)\frac{\psi_2^{-1}(\varepsilon)}{\mu(r)}\alpha\circ\frac{1}{2}\psi_2^{-1}(\varepsilon)\\
 &\le -\frac{\tau(r,\varepsilon)\psi_2^{-1}(\varepsilon)}{\theta\mu(r)}\alpha\circ\frac{1}{2}\psi_2^{-1}(\varepsilon),
\end{align*}
where the last inequality results from \eqref{eq-K}. This implies that 
\begin{eqnarray}
r \geq V(x_0) \geq \frac{\tau(r,\varepsilon)\psi_2^{-1}(\varepsilon)}{\theta\mu(r)}\alpha\circ\frac{1}{2}\psi_2^{-1}(\varepsilon).
\label{eq:Close-to-contradiction}
\end{eqnarray}
For the particular choice
\begin{eqnarray}
\tau(r,\varepsilon):= \frac{4r\theta\mu(r)}{\psi_2^{-1}(\varepsilon)\alpha\circ\frac{1}{2}\psi_2^{-1}(\varepsilon)},
\label{eq:Ulim-time-definition-u=0}
\end{eqnarray}
\eqref{eq:Close-to-contradiction} yields a contradiction. Thus,  \eqref{eq:time-delay}  satisfies the $V$-ULIM estimate \eqref{eq:ULIM_ISS_section} with the function $\tau$ given in \eqref{eq:Ulim-time-definition-u=0} and the gain $\gamma=\psi_2\circ2\chi$, which concludes the proof. Thus the system is $V$-UGS and $V$-ULIM and therefore $V$-ISS by Theorem~\ref{thm:ISS-Superposition-theorem-with-V} and thus ISS by Proposition~\ref{prop:V-ISS_and_ISS}. \hfill $\Box$

\section{Conclusion and perspectives}

It has been shown that ISS can be inferred from an ISS LKF with pointwise dissipation, provided that the same LKF can be used to establish UGS. For this, we have relied on a superposition principle for a variant of ISS, in which solutions are estimated through the LKF rather than through the classical $\|\cdot\|$-norm of the state. 
As evidenced in the example section, the result applies in situations where previously known sufficient conditions do not suffice.

While our result relaxes the ISS conditions imposed in \cite{KLW17}, the original question posed in \cite{CPM17}, namely whether a pointwise dissipation is enough to guarantee ISS, remains open. A potential next step in that direction would be to show that ISS indeed holds under a pointwise dissipation if the system is assumed to be UGS, thus without assuming a common LKF for both ISS and UGS.

\section{Appendix: ISS superposition theorems}
\label{sec:Appendix: ISS superposition theorems}

\subsection{\texorpdfstring{Criteria for $V$-ULS and $V$-UGS}{Criteria for V-ULS and V-UGS}}

In this section, we provide analytic criteria for some of the $V$-stability properties that will be useful in the proof of the main results.

\begin{lemma}
\label{lem:V-ULS_restatement}
Let $V$ be an LKF candidate for system \eqref{eq:time-delay}. Then \eqref{eq:time-delay} is $V$-ULS if and only if for every $\eps>0$ there
exists a $\delta>0$ such that, for all $x_0\in\mathcal X^n$ and all $u\in\Uc$,
\begin{equation}
\label{LS_Restatement}
\hspace{-3mm}V(x_0)\leq \delta,\ \ \|u\|\leq \delta,\ \ t\geq 0\,\, \Rightarrow\,\, V\big(x_t(x_0,u)\big) \leq\eps.
\end{equation}
\end{lemma}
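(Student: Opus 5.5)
We prove the two implications separately; both are elementary manipulations of comparison functions, exactly as in the classical norm-based case.

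\emph{($\Rightarrow$)} Suppose the system is $V$-ULS with $\sigma,\gamma\in\Kinf$ and $r>0$ as in \eqref{eq:V-GSAbschaetzung}. Given $\eps>0$, we choose
\[
\delta:=\min\Big\{r,\ \sigma^{-1}\big(\tfrac{\eps}{2}\big),\ \gamma^{-1}\big(\tfrac{\eps}{2}\big)\Big\}>0 .
\]
Let $V(x_0)\le\delta$ and $\|u\|\le\delta$. Since $\delta\le r$, estimate \eqref{eq:V-GSAbschaetzung} applies. Using that $\sigma$ and $\gamma$ are increasing, it gives $V(x_t(x_0,u))\le \sigma(\delta)+\gamma(\delta)\le\eps$ for all $t\ge0$. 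This is \eqref{LS_Restatement}. The tacit convention that solutions exist globally is inherited from the $V$-ULS assumption.

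\emph{($\Leftarrow$)} This direction requires constructing a $\Kinf$ bound from the $\eps$--$\delta$ statement. For each $\eps>0$, let $\delta(\eps)$ be the supremum of all admissible $\delta$ in \eqref{LS_Restatement}, capped at $\eps$ to keep it finite. Then $\delta(\cdot)$ is positive and nondecreasing on $(0,\infty)$, since an admissible $\delta$ for $\eps$ is also admissible for every larger $\eps$. Moreover, any $\delta'<\delta(\eps)$ is admissible.

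The main technical step is to minorize $\delta$ by some $\omega\in\K$ with $\omega(\eps)<\delta(\eps)$ for all $\eps>0$. This is a standard lemma (cf. \cite[Lemma A.?]{Mir23} or the construction in \cite{MiW18b}). Concretely, take $\omega_0(s):=\frac{1}{s}\int_{s/2}^{s}\delta(\tau)\,d\tau$. It is continuous and satisfies $\omega_0(s)\le\delta(s)$. Positivity of $\delta$ gives $\omega_0(s)>0$, and monotonicity of $\delta$ gives $\omega_0(s)\ge\tfrac12\delta(s/2)$. Because $\omega_0$ need not be increasing, we then set $\omega(s):=\frac{s}{1+s}\inf_{\tau\ge s}\omega_0(\tau)$, which is continuous, strictly increasing and bounded by $\tfrac12\delta(s)$. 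We then extend $\omega$ to a $\Kinf$ function on the range where needed. Set $r:=\omega(1)$ and $\sigma:=\omega^{-1}$ on $[0,r]$, extended arbitrarily to a $\Kinf$ function.

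Now take $x_0$ and $u$ with $V(x_0)\le r$ and $\|u\|\le r$. Put $s:=\max\{V(x_0),\|u\|\}\le r$ and $\eps:=\omega^{-1}(s)\le 1$; if $s=0$, apply the argument with arbitrary small $\eps$. Then $V(x_0),\|u\|\le\omega(\eps)<\delta(\eps)$, so \eqref{LS_Restatement} yields
\[
V(x_t(x_0,u))\le\eps=\omega^{-1}\big(\max\{V(x_0),\|u\|\}\big)\le \omega^{-1}(V(x_0))+\omega^{-1}(\|u\|)
\]
for all $t\ge0$. This is \eqref{eq:V-GSAbschaetzung} with $\sigma=\gamma=\omega^{-1}$, which proves $V$-ULS. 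The case $s=0$ gives $V(x_t)\le\eps$ for every $\eps>0$, hence $V(x_t)=0$, consistent with the bound.
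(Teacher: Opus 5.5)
Your proof is correct and follows essentially the same route as the paper. The ($\Rightarrow$) direction uses the identical choice of $\delta$. For ($\Leftarrow$) you likewise take $\delta(\eps)$ as the supremum of admissible $\delta$'s, minorize it by a $\K$-function, and use its inverse as both gains via $\omega^{-1}(\max\{a,b\})\le\omega^{-1}(a)+\omega^{-1}(b)$. The only difference is that you construct the minorant explicitly, where the paper simply cites \cite[Proposition A.16]{Mir23}; your minorant lies strictly below $\delta$, which cleanly avoids having to check whether the supremum itself is admissible.
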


\begin{proof}
"$\Rightarrow$". 
Let \eqref{eq:time-delay} be $V$-ULS for the given LKF candidate $V$. Let $\sigma,\gamma \in \Kinf$ and $r>0$ be such that
\eqref{eq:V-GSAbschaetzung} holds for these functions and the neighborhood
specified by $r$.
Let $\eps>0$ be arbitrary and choose
\begin{equation*}
    \delta=\delta(\eps):=\min\left\{\sigma^{-1}\left(\frac{\eps}{2}\right),
\gamma^{-1}\left(\frac{\eps}{2}\right), r\right\}.
\end{equation*}
With this choice, \eqref{LS_Restatement} follows from \eqref{eq:V-GSAbschaetzung}.

"$\Leftarrow$" Let \eqref{LS_Restatement} hold. For $\varepsilon \geq 0$ define
\begin{align*}
\delta(\eps):=\sup\{ s \geq 0 : V(x_0) \leq &s \ \wedge \  \|u\| \leq s \\
&\Rightarrow \sup_{t \geq 0} V\big(x_t(x_0,u)\big) \leq\eps  \}.
\end{align*}
Clearly, \eqref{LS_Restatement} implies that $\delta(\cdot)$ is well
defined, increasing and continuous in $0$. By \cite[Proposition A.16]{Mir23}, there exists $\hat{\delta} \in {\cal K}$ with
$\hat{\delta}\leq \delta$. Set $r:= \sup_{s\geq 0}  \hat{\delta}(s) \in
\R_+ \cup \{\infty\}$ and define $\gamma:=\hat{\delta}^{-1}:[0,r) \to \R$, and extend it in an arbitrary way to a $\Kinf$-function.
Then for $V(x_0) < r$ and $\|u\|<r$, we have
\[
V\big(x_t(x_0,u)\big) \leq \gamma(\max\{ V(x_0), \|u\| \}) \leq \gamma(V(x_0)) + \gamma(\|u\|),
\]
which shows $V$-ULS.
\end{proof}


\vspace{2mm}
It is useful to have a quantitative restatement of the $V$-BRS property using estimates of comparison type. The proof of the following result is inspired by a similar proof in \cite[Lemma~2.12]{MiW19a}. In the statement, we call a function $h: \R_+^3 \to \R_+$ increasing, if $(r_1,r_2,r_3) \leq (R_1,R_2,R_3)$
implies that $h(r_1,r_2,r_3) \leq h(R_1,R_2,R_3)$, where we use the component-wise
partial order on $\R_+^3$. We call $h$ strictly increasing if $(r_1,r_2,r_3)
\leq (R_1,R_2,R_3)$ and $(r_1,r_2,r_3) \neq (R_1,R_2,R_3)$ imply $h(r_1,r_2,r_3) <
h(R_1,R_2,R_3)$.

\begin{lemma}
\label{lem:Boundedness_Reachability_Sets_criterion}
Consider an LKF candidate $V$ for \eqref{eq:time-delay}. The following statements are equivalent:
\begin{enumerate}
    \item[(i)] \eqref{eq:time-delay} has $V$-bounded reachability sets.
    \item[(ii)] There exists a continuous, increasing function $\mu: \R_+^3 \to \R_+$, such that for
all $x_0\in \mathcal X^n$, $u\in \Uc$ and all $t \geq 0$ we have
 \begin{equation}
    \label{eq:8_ISS}
    V\big( x_t(x_0,u) \big) \leq \mu( V(x_0),\|u\|,t).
\end{equation}
    \item[(iii)] There exists a continuous function $\mu: \R_+^3 \to \R_+$ such that for
all $x_0\in \mathcal X^n, u\in \Uc$ and all $t \geq 0$ the inequality \eqref{eq:8_ISS} holds.
\end{enumerate}
\end{lemma}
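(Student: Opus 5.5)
We will prove the cycle (ii) $\Rightarrow$ (iii) $\Rightarrow$ (i) $\Rightarrow$ (ii). The implication (ii) $\Rightarrow$ (iii) is trivial, since an increasing continuous function is in particular continuous. For (iii) $\Rightarrow$ (i), fix $C>0$ and $\tau>0$. For $V(x_0)\le C$, $\|u\|\le C$ and $t\in[0,\tau]$, the bound \eqref{eq:8_ISS} gives
\[
V(x_t(x_0,u))\le \max\{\mu(a,b,s): (a,b,s)\in[0,C]^2\times[0,\tau]\},
\]
and the right-hand side is finite because $\mu$ is continuous on a compact set. One small point needs care here: (iii) tacitly asserts that solutions exist on all of $\R_+$. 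This is consistent with the convention of the paper's remark on well-definedness of solutions. If needed, the BIC property combined with the sandwich bound on $|x(t)|$ gives global existence.

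The main work is (i) $\Rightarrow$ (ii). We start from the $V$-BRS assumption and define, for $C,\tau\ge 0$,
\[
\tilde\mu(C,\tau):=\sup\big\{V(x_t(x_0,u)) : V(x_0)\le C,\ \|u\|\le C,\ t\in[0,\tau]\big\},
\]
which is finite by (i). It is nondecreasing in both arguments, since the admissible sets grow with $C$ and $\tau$. It may fail to be continuous, however, so the plan is to majorize it by a continuous increasing function. Following \cite[Lemma~2.12]{MiW19a}, we set
\[
\hat\mu(C,\tau):=\int_C^{C+1}\int_\tau^{\tau+1}\tilde\mu(s,r)\,dr\,ds + C+\tau,
\]
which is well defined because $\tilde\mu$ is monotone and hence measurable and locally bounded. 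By monotonicity, $\hat\mu(C,\tau)\ge \tilde\mu(C,\tau)$. The function $\hat\mu$ is continuous, since the integral of a locally bounded function over a window that shifts continuously depends continuously on the window. It is also increasing, and strictly so because of the added term $C+\tau$.

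Finally we define $\mu(a,b,t):=\hat\mu(\max\{a,b\},t)$, which is continuous and increasing on $\R_+^3$. Given $x_0$, $u$ and $t$, take $C=\max\{V(x_0),\|u\|\}$ and $\tau=t$. Then
\[
V(x_t(x_0,u))\le \tilde\mu(C,t)\le \mu(V(x_0),\|u\|,t),
\]
which is \eqref{eq:8_ISS}.

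The only delicate step is the regularization. We must check that local boundedness of the monotone $\tilde\mu$ makes the integral finite, and that the double-integral average is jointly continuous. Joint continuity follows by writing the integral over the shifted square and using dominated convergence, with $\tilde\mu$ bounded on compact sets. If one wants $\mu$ to separate variables, one can instead bound $\max\{a,b\}\le a+b$; this is not required here.
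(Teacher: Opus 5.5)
Your proof is correct and follows the paper's route: the two easy implications are handled in the same way, and for (i)$\Rightarrow$(ii) you likewise majorize the monotone supremum function $\tilde\mu$ by averaging it over a forward window and adding a strictly increasing term. Your unit window $[C,C+1]\times[\tau,\tau+1]$, with $V(x_0)$ and $\|u\|$ merged via $\max$, is slightly cleaner than the paper's dyadic window $[C_1,2C_1]\times[C_2,2C_2]\times[\tau,2\tau]$ normalized by $C_1C_2\tau$: it stays nondegenerate on the boundary of the orthant, so the paper's extension by limits and its auxiliary function $\nu$ are no longer needed.
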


\begin{proof}
(i) $\Rightarrow$ (ii). Define $\tilde\mu: \R_+^3 \to \R_+$ by
\begin{eqnarray}
\label{eq:tildechi-def_ISS_section}
\hspace{-7mm}\tilde\mu(C_1,C_2,\tau) := \sup_{V(x_0) \leq C_1,\: \|u\| \leq C_2,\: t \in [0,\tau]} V\big( x_t(x_0,u) \big),
\end{eqnarray}
which is well-defined in view of the item (i). Clearly, $\tilde\mu$ is increasing by definition. In particular, it is
locally integrable.

Define $\hat\mu: (0,+\infty)^3 \to \R_+$ by setting for $C_1,C_2,\tau>0$
\begin{align*}
\hat\mu(C_1,C_2,\tau) &:= \frac{1}{C_1C_2\tau} \int_{C_1}^{2C_1}\hspace{-2mm}\int_{C_2}^{2C_2}\hspace{-2mm}\int_{\tau}^{2\tau}\hspace{-2mm} \tilde\mu(r_1,r_2,s) ds dr_2 dr_1 \\
& \hspace*{4cm}+ C_1C_2\tau.
\end{align*}
By construction, $\hat\mu$ is strictly increasing and continuous on $(0,+\infty)^3$.

We can enlarge the domain of definition of $\hat\mu$ to all of $\R^3_+$
using monotonicity. To this end we define for $C_2,\tau>0$: $\hat\mu(0,C_2,\tau) := \lim_{C_1\to +0}
\hat\mu(C_1,C_2,\tau)$, for $C_1\geq0,\ \tau>0$: $\hat\mu(C_1,0,\tau) := \lim_{C_2\to +0}
\hat\mu(C_1,C_2,\tau)$ and for $C_1,C_2\geq 0$ we define $\hat\mu(C_1,C_2,0) := \lim_{\tau\to +0}
\hat\mu(C_1,C_2,\tau)$. All these limits are well-defined as $\hat\mu$ is increasing on
$(0,+\infty)^3$, and we obtain that the resulting function is increasing on
$\R_+^3$. Note that the construction does not guarantee that $\hat\mu$ is
continuous. To obtain continuity, choose a continuous strictly
increasing function $\nu: \R_+ \to \R_+$ with 
\[
\nu(r) > \max\{ \hat\mu(0,0,r), \hat\mu(0,r,0), \hat\mu(r,0,0)\},\quad r \geq 0,
\]
and define for $(C_1,C_2,\tau) \geq (0,0,0)$
\begin{align*} 
\mu(C_1,C_2,\tau) :=
      \max\big\{
  \nu\big(\max\{C_1,C_2,\tau\}\big),\hat\mu(C_1,&C_2,\tau) \big\} \\
	&+ C_1C_2\tau.
\end{align*}
The function $\mu$ is continuous as 
\[
\mu(C_1,C_2,\tau) = \nu\big(\max\{C_1,C_2,\tau\}\big) + C_1C_2\tau
\]
whenever $C_1$, $C_2$ or $\tau$ is small enough.
At the same time we have for $C_1>0$, $C_2>0$ and $\tau >0$ that
\begin{align*}
&\mu(C_1,C_2,\tau) \geq \hat\mu(C_1,C_2,\tau)\\
& \geq
\frac{1}{C_1C_2\tau} \int_{C_1}^{2C_1}\int_{C_2}^{2C_2}\int_{\tau}^{2\tau} 1 ds dr_2 dr_1 \ \tilde\mu(C_1,C_2,\tau) + C_1C_2\tau \\
&\geq \tilde\mu(C_1,C_2,\tau).
\end{align*}
This implies that (ii) holds with this $\mu$.

(ii) $\Rightarrow$ (iii) is evident.

(iii) $\Rightarrow$ (i) follows due to continuity of $\mu$.
\end{proof}


\vspace{3mm}

The next result states that a similar characterization can be derived for the $V$-UGS property, in which the upper bound is time-independent.
\begin{proposition}
Consider system \eqref{eq:time-delay} and an LKF candidate $V$.
System \eqref{eq:time-delay} is $V$-UGB if and only if the condition of Lemma~\ref{lem:Boundedness_Reachability_Sets_criterion}\,(ii) holds with a $\mu$ that does not depend on $t$. 
\end{proposition}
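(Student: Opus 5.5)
The proof consists of two implications. The one from $V$-UGB to the time-independent bound is a direct construction. The reverse one is essentially a relabelling of functions.

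\emph{($V$-UGB $\Rightarrow$ time-independent $\mu$).} Suppose \eqref{eq:V-UGBAbschaetzung} holds with $\sigma,\gamma\in\Kinf$ and $c\geq 0$. We set
\[
\mu(C_1,C_2):=\sigma(C_1)+\gamma(C_2)+c .
\]
This function is continuous and increasing on $\R_+^2$, so we may regard it as a function on $\R_+^3$ that is constant in the third argument. Then \eqref{eq:8_ISS} holds for all $x_0$, $u$ and $t\geq 0$, which is exactly condition (ii) of Lemma~\ref{lem:Boundedness_Reachability_Sets_criterion} with a $t$-independent $\mu$.

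\emph{(Time-independent $\mu$ $\Rightarrow$ $V$-UGB).} Let $\mu:\R_+^2\to\R_+$ be continuous and increasing with $V(x_t(x_0,u))\leq \mu(V(x_0),\|u\|)$ for all $t\geq 0$. We set $c:=\mu(0,0)\geq 0$. We then need $\sigma,\gamma\in\Kinf$ such that
\[
\mu(C_1,C_2)\leq \sigma(C_1)+\gamma(C_2)+c \quad \text{for all } C_1,C_2\geq 0 .
\]
The key step is the splitting into $\max$ form. By monotonicity,
\[
\mu(C_1,C_2)\leq \mu(s,s), \qquad s:=\max\{C_1,C_2\}.
\]
The map $s\mapsto \mu(s,s)-\mu(0,0)$ is continuous, nondecreasing and vanishes at $0$. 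Hence it is bounded above by some $\kappa\in\Kinf$, for instance $\kappa(s):=\mu(s,s)-\mu(0,0)+s$. Then
\[
\mu(C_1,C_2)\leq c+\kappa(\max\{C_1,C_2\})\leq c+\kappa(C_1)+\kappa(C_2),
\]
so $\sigma=\gamma:=\kappa$ works and \eqref{eq:V-UGBAbschaetzung} follows.

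The only subtle point is making sure the majorant vanishes at the origin. This is handled by subtracting $\mu(0,0)$ and absorbing it into the constant $c$, which the $V$-UGB definition explicitly allows. Everything else is routine comparison-function manipulation; global existence of solutions is implicit in both formulations, as in the paper's standing convention.
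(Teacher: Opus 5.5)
Your proof is correct and follows the same route as the paper. The forward direction takes $\mu=\sigma+\gamma+c$; the converse bounds $\mu(C_1,C_2)$ by $\mu(s,s)$ with $s=\max\{C_1,C_2\}$ and absorbs $\mu(0,0)$ into the constant $c$. Adding $+s$ in $\kappa$ is a small improvement: the paper asserts that $r\mapsto\mu(r,r)-\mu(0,0)$ is itself of class $\Kinf$, which fails when $\mu$ is only nondecreasing or bounded, and your correction fixes exactly that.
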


\begin{proof}
It is immediate from the definitions that if \eqref{eq:time-delay} is $V$-UGB, then \eqref{eq:time-delay} is $V$-BRS and $\mu$ in Lemma~\ref{lem:Boundedness_Reachability_Sets_criterion} can be chosen as
\[
\mu(V(x_0),\|u\|) := \sigma(V(x_0)) +\gamma(\|u\|) + c
\]
with $\sigma, \gamma \in\Kinf$ and $c\ge0$. 
Conversely, let Lemma~\ref{lem:Boundedness_Reachability_Sets_criterion}\,(ii) hold with a continuous increasing $\mu=\mu(V(x_0),\|u\|)$. Then for any $t\ge0$, $x_0\in \mathcal X^n$, $u\in\Uc$ it holds that
\[
 V\big( x_t(x_0,u) \big) \leq \max\big\{\mu(V(x_0),V(x_0)),\mu(\|u\|,\|u\|)\big\}.
\]
By assumption, $\tilde\sigma:r\mapsto \mu(r,r)$ is a continuous increasing function. Define $\sigma(r):=\tilde\sigma(r) - \tilde\sigma(0)$. Then $\sigma\in\Kinf$ and we have for any $t\geq 0$, $x_0\in \mathcal X^n$, $u\in\Uc$:
\[
 V\big( x_t(x_0,u) \big)  \leq \sigma(V(x_0)) + \sigma(\|u\|) + \tilde\sigma(0),
\]
which shows that \eqref{eq:time-delay} is $V$-UGB.
\end{proof}

\vspace{3mm}
Finally, we can characterize uniform global stability in a similar way.
\begin{lemma}
\label{lem:LS_plus_UGB_equals_GS}
Consider system \eqref{eq:time-delay} and an LKF candidate $V$.
Then \eqref{eq:time-delay} is $V$-UGS if and only if it is $V$-ULS and $V$-UGB. 
\end{lemma}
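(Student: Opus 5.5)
The forward direction is immediate from the definitions. If \eqref{eq:time-delay} is $V$-UGS with $\sigma,\gamma\in\Kinf$, then \eqref{eq:V-GSAbschaetzung} holds for all $x_0$ and $u$. Restricting to $V(x_0)\le r$, $\|u\|\le r$ for any $r>0$ gives $V$-ULS. Taking $c=0$ in \eqref{eq:V-UGBAbschaetzung} gives $V$-UGB.

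For the converse, I will combine the two estimates by splitting according to the size of $\max\{V(x_0),\|u\|\}$. Let $\sigma_b,\gamma_b\in\Kinf$ and $c\ge 0$ be the $V$-UGB data. By Lemma~\ref{lem:V-ULS_restatement}, $V$-ULS gives, for every $\eps>0$, some $\delta(\eps)>0$ with
\[
V(x_0)\le\delta,\ \|u\|\le\delta \;\Rightarrow\; V(x_t(x_0,u))\le\eps \quad \forall t\ge 0 .
\]
As in the proof of that lemma, I extract from this a function $\rho\in\K$ such that
\[
V(x_t(x_0,u))\le \rho\big(\max\{V(x_0),\|u\|\}\big)\quad\text{whenever } \max\{V(x_0),\|u\|\}\le r_0,
\]
for some $r_0>0$. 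This is exactly the "$\Leftarrow$" construction there, via $\hat\delta$ and its inverse. Shrinking $r_0$ if necessary, I may assume $\rho$ is defined and finite on $[0,r_0]$.

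In the large regime, $s:=\max\{V(x_0),\|u\|\}> r_0$, the $V$-UGB bound gives
\[
V(x_t)\le \sigma_b(s)+\gamma_b(s)+c\le \Big(\sigma_b(s)+\gamma_b(s)+\tfrac{c}{r_0}s\Big).
\]
The additive constant has thus been absorbed into a linear term, which is valid precisely because $s> r_0$. Now I define
\[
\hat\sigma(s):=\max\Big\{\rho(\min\{s,r_0\}),\ \sigma_b(s)+\gamma_b(s)+\tfrac{c}{r_0}s\Big\}+s .
\]
This function is continuous, strictly increasing and unbounded, with $\hat\sigma(0)=0$, so $\hat\sigma\in\Kinf$. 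In both regimes, $V(x_t(x_0,u))\le\hat\sigma(\max\{V(x_0),\|u\|\})\le \hat\sigma(V(x_0))+\hat\sigma(\|u\|)$, which is $V$-UGS with $\sigma=\gamma=\hat\sigma$.

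The only delicate point is the passage from the $\eps$–$\delta$ formulation to a genuine $\K$-bound near the origin. This is where the continuity at $0$ of the maximal $\delta(\cdot)$ and \cite[Proposition A.16]{Mir23} are used, already carried out in Lemma~\ref{lem:V-ULS_restatement}, so I would reuse it verbatim. The rest is the routine comparison-function bookkeeping above. Global existence of solutions is not an issue, since $V$-UGB already presupposes that solutions are defined on $\R_+$.
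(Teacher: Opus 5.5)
Your proof is correct and follows essentially the paper's argument: split according to whether $(V(x_0),\|u\|)$ lies in the $V$-ULS neighbourhood, and in the complement absorb the constant $c$ into a $\Kinf$ term. You absorb it via $\tfrac{c}{r_0}s$ with $s=\max\{V(x_0),\|u\|\}$, whereas the paper uses $k_1\sigma_2$, $k_2\gamma_2$ with $c=k_1\sigma_2(r)=k_2\gamma_2(r)$. The ``delicate'' detour through Lemma~\ref{lem:V-ULS_restatement} is unnecessary: the definition of $V$-ULS (with its $\sigma_1,\gamma_1\in\Kinf$ and radius $r$) already gives $V(x_t(x_0,u))\le(\sigma_1+\gamma_1)(\max\{V(x_0),\|u\|\})$ whenever $\max\{V(x_0),\|u\|\}\le r$, so you can take $\rho:=\sigma_1+\gamma_1$ and $r_0:=r$ directly.
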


\begin{proof}
It is immediate from the definitions that $V$-UGS implies both $V$-ULS and $V$-UGB.
For the converse direction, assume that \eqref{eq:time-delay} is $V$-ULS and $V$-UGB. This means, that there exist $\sigma_1,\gamma_1,\sigma_2,\gamma_2 \in \Kinf$ and $r,c>0$ such that, for all $x_0$ with $V(x_0)\leq r$ and all $u$ with $\|u\| \leq r$,
\begin{equation*}
V\big( x_t(x_0,u) \big)  \leq \sigma_1(V(x_0)) +\gamma_1(\|u\|) \quad \forall t\geq 0, 
\end{equation*}
and such that for all $x_0 \in \mathcal X^n$ and all $u \in \Uc$ the following estimate holds:
\begin{equation*}
V\big( x_t(x_0,u) \big)  \leq \sigma_2(V(x_0)) +\gamma_2(\|u\|) + c \quad \forall t\geq 0.
\end{equation*}
Assume without restriction that $\sigma_2 (s) \geq \sigma_1(s)$ and $\gamma_2 (s) \geq \gamma_1(s)$ for all $s \geq 0$. Pick $k_1,k_2>0$ so that $c=k_1 \sigma_2(r)$ and $c=k_2 \gamma_2(r)$. Then for all $(x_0,u) \in \mathcal X^n\times \Uc$ with $V(x_0) \geq r$ or $\|u\| \geq r$ we have
\[
c \leq  k_1 \sigma_2(V(x_0)) + k_2 \gamma_2(\|u\|).
\]
Thus for all $x_0 \in \mathcal X^n$ and all $u \in \Uc$ it holds that
\begin{equation*}
V\big( x_t(x_0,u) \big)  \leq (1+k_1)\sigma_2(V(x_0)) + (1+k_2)\gamma_2(\|u\|).
\end{equation*}
This shows $V$-UGS of \eqref{eq:time-delay}.
\end{proof}


\subsection{\texorpdfstring{Auxiliary lemmas for the $V$-ISS superposition theorem}{Auxiliary lemmas for the V-ISS superposition theorem}}

%
%

We proceed with a sequence of lemmas needed to show Theorem~\ref{thm:ISS-Superposition-theorem-with-V}.

\begin{lemma}
\label{ISS_implies_UAG}
Let $V\in C(\mathcal X^n,\R_+)$ be an LKF candidate. If \eqref{eq:time-delay} is $V$-ISS, then \eqref{eq:time-delay} is $V$-GUAG.
\end{lemma}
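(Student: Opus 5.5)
The plan is to read off $V$-GUAG directly from the $V$-ISS estimate \eqref{eq:V-iss_sum}, keeping its gain $\gamma\in\Kinf$ and choosing the time $\tau(\eps,r)$ from the decay of the $\KL$-function $\beta$. The key point is that $\beta(r,\cdot)\in\LL$ for every $r>0$, so it is strictly decreasing and tends to $0$. This lets us push the $\beta$-term below any $\eps>0$ after a finite time, uniformly over all initial states with $V(x_0)\le r$. Since the $\KL$-term does not depend on the input, no restriction on $\|u\|$ is needed, which is exactly what separates $V$-GUAG from $V$-UAG.

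Concretely, we fix $\eps,r>0$. If $\beta(r,0)\le\eps$, we set $\tau(\eps,r):=0$. Otherwise, since $\beta(r,\cdot)$ is continuous, strictly decreasing and converges to zero, there is a unique $\tau>0$ with $\beta(r,\tau)=\eps$, and we set $\tau(\eps,r)$ equal to it. Now take any $x_0\in\X$ with $V(x_0)\le r$, any $u\in\Uc$ and any $t\ge\tau(\eps,r)$. We use that $\beta(\cdot,t)$ is increasing and $\beta(r,\cdot)$ is decreasing. The $V$-ISS estimate then gives
\[
V(x_t(x_0,u))\le \beta(V(x_0),t)+\gamma(\|u\|)\le \beta(r,\tau(\eps,r))+\gamma(\|u\|)\le \eps+\gamma(\|u\|).
\]
This is precisely \eqref{eq:V-UAG_Absch} with the same gain $\gamma$.

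Two minor points need a remark. First, if $V(x_0)=0$, then $\beta(0,t)=0$ because $\beta(\cdot,t)\in\K$, so the bound holds trivially. Second, global existence of solutions is part of the $V$-ISS convention, as noted in the remark following the definition. There is no substantial obstacle. The only step requiring care is that uniformity in $x_0$ comes from monotonicity of $\beta$ in its first argument, and uniformity in $u$ comes from the additive structure of \eqref{eq:V-iss_sum}.
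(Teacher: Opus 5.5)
Your proof is correct and matches the paper's argument essentially verbatim. The paper also defines $\tau(\eps,r)$ as the solution of $\beta(r,\tau)=\eps$, or $0$ if no solution exists, and then uses the monotonicity of $\beta$ in both arguments together with the input-independence of the $\KL$-term.
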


\begin{proof}
Let \eqref{eq:time-delay} be $V$-ISS with the corresponding $\beta\in\KL$ and $\gamma\in\Kinf$.
Take arbitrary $\eps, r >0$.  Define $\tau=\tau(\eps,r)$ as
the solution of the equation $\beta(r,\tau)=\eps$ (if it exists,
then it is unique because of the monotonicity of $\beta$ in the second
argument, if it does not exist, we set $\tau(\eps,r):=0$). Then for
all $t \geq \tau$, all $x_0\in \mathcal X^n$ with $V(x_0) \leq r$ and all $u \in \Uc$ we have
\begin{eqnarray*}
V(x_t(x_0,u)) &\leq& \beta(V(x_0),t) + \gamma(\|u\|)  \\
&\leq& \beta(V(x_0),\tau) + \gamma(\|u\|) \\
&\leq& \eps + \gamma(\|u\|),
\end{eqnarray*}
and the implication \eqref{eq:V-UAG_Absch} holds.
\end{proof}

\begin{lemma}
\label{UAG-ULS}
Let $V\in C(\mathcal X^n,\R_+)$ be an LKF candidate. If \eqref{eq:time-delay} is $V$-UAG and $V$-CEP, then it is $V$-ULS.
\end{lemma}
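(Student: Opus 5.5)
We plan to verify the restatement of $V$-ULS from Lemma~\ref{lem:V-ULS_restatement}. That is, we show that for every $\eps>0$ there is a $\delta>0$ such that $V(x_0)\le\delta$ and $\|u\|\le\delta$ imply $V(x_t(x_0,u))\le\eps$ for all $t\ge 0$. The idea is to split the time axis into two parts. On a finite initial interval $[0,\tau]$ the $V$-CEP property controls $V(x_t)$. After time $\tau$ the $V$-UAG property takes over, and $\tau$ can be fixed uniformly once we bound the size of the initial state and input.

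\textbf{Step 1: fix the waiting time via $V$-UAG.} Fix $\eps>0$. Let $\gamma\in\Kinf\cup\{0\}$ be the gain in the $V$-UAG property. Apply $V$-UAG with accuracy $\eps/2$ and radius $r:=1$, which gives a time $\tau:=\tau(\eps/2,1)$. Then for all $x_0$ with $V(x_0)\le 1$, all $u$ with $\|u\|\le 1$, and all $t\ge\tau$, we have
\[
V(x_t(x_0,u))\le \tfrac{\eps}{2}+\gamma(\|u\|).
\]
Choose $\delta_1\in(0,1]$ with $\gamma(\delta_1)\le\eps/2$; this is possible because $\gamma$ is continuous with $\gamma(0)=0$. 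Then $\|u\|\le\delta_1$ forces $V(x_t)\le\eps$ for all $t\ge\tau$.

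\textbf{Step 2: cover $[0,\tau]$ via $V$-CEP.} If $\tau=0$, set $h:=1$; otherwise set $h:=\tau$. Apply Definition~\ref{def:V_CEP} with this $\eps$ and $h$ to obtain $\delta_2=\delta(\eps,h)>0$. For $V(x_0)\le\delta_2$ and $\|u\|\le\delta_2$, the solution then exists on $[0,h]$ and satisfies $V(x_t)\le\eps$ there.

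Setting $\delta:=\min\{\delta_1,\delta_2\}$ gives $V(x_t(x_0,u))\le\eps$ for all $t\ge0$, provided the solution is defined for all $t\ge0$.

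\textbf{Main obstacle: existence of the solution beyond $\tau$.} By the standing convention, $V$-UAG presupposes that solutions exist for all times. Alternatively, we can argue directly. The bound on $V(x_t)$ holds on the whole maximal interval of existence. Combined with the sandwich bound \eqref{eq:Sandwich}, this bounds $|x(t)|$ on that interval, and the BIC property then yields global existence.

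Finally, Lemma~\ref{lem:V-ULS_restatement} converts the $\eps$–$\delta$ statement into the $\Kinf$-estimate \eqref{eq:V-GSAbschaetzung}, which proves $V$-ULS.
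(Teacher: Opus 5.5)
Your proof is correct and follows the paper's argument essentially verbatim. You fix $\tau=\tau(\eps/2,1)$ from $V$-UAG, pick $\delta_1\in(0,1]$ with $\gamma(\delta_1)\le\eps/2$, cover $[0,\tau]$ using $V$-CEP, set $\delta=\min\{\delta_1,\delta_2\}$, and conclude via Lemma~\ref{lem:V-ULS_restatement}. Your treatment of the degenerate case $\tau=0$ (where $V$-CEP needs $h>0$) is a small extra bit of care that the paper omits.
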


\begin{proof}
    We will show that \eqref{LS_Restatement} holds so that the claim
    follows from Lemma~\ref{lem:V-ULS_restatement}.
		Let $\tau$ and $\gamma$ be the
    functions from Definition~\ref{def:Vstability}\,\ref{def:Vstability-UAG}.
    Let $\eps>0$ and $\tau:=\tau(\eps/2,1)$.
    Pick any $\delta_1\in (0,1]$ such that
    $\gamma(\delta_1)<\eps/2$. Then for all $x_0 \in \mathcal X^n$ with $V(x_0) \leq 1$
    and all $u \in \Uc$ with $\|u\| \leq\delta_1$ we have
\begin{equation}\label{eq:2}
\sup_{t\geq \tau}V(x_t(x_0,u)) \leq\frac{\eps}{2}+\gamma(\|u\|)<\eps.
\end{equation}

Since \eqref{eq:time-delay}  is $V$-CEP, there is some $\delta_2=\delta_2(\eps,\tau)>0$ so that
\[
V(\eta) \leq\delta_2 \ \wedge \  \|u\| \leq \delta_2 \quad \Rightarrow \quad \sup_{t\in[0,\tau]}V(x_t(\eta,u)) \leq\eps.
\]
Together with \eqref{eq:2}, this proves  \eqref{LS_Restatement} with $\delta:=\min\lbrace 1,\delta_1,\delta_2 \rbrace$.
\end{proof}

\begin{proposition}
\label{prop:ULIM_plus_mildRFC_implies_UGB}
Let $V\in C(\mathcal X^n,\R_+)$ be an LKF candidate. If \eqref{eq:time-delay} is $V$-BRS and has the $V$-ULIM property. Then \eqref{eq:time-delay}  is $V$-UGB.
\end{proposition}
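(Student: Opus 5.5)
We show that $V$-ULIM together with $V$-BRS gives a uniform bound on $V(x_t)$ for all $t\geq 0$, of the form $\mu(V(x_0),\|u\|)$ with $\mu$ continuous and increasing. The $V$-UGB estimate \eqref{eq:V-UGBAbschaetzung} then follows from the preceding proposition characterizing $V$-UGB via Lemma~\ref{lem:Boundedness_Reachability_Sets_criterion}(ii) with a time-independent $\mu$. The idea is the classical one from \cite{MiW18b}. ULIM forces every trajectory starting in the $V$-ball of radius $r$ to enter the set $\{V\le 1+\gamma(\|u\|)\}$ within a uniform time $\tau(1,r)$. BRS bounds the excursion up to that time, and also the excursion of any trajectory restarted from that smaller set over a uniform time window. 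Iterating with the cocycle property gives a global-in-time bound.

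Concretely, fix $r>0$ and consider $x_0$ with $V(x_0)\le r$ and $u$ with $\|u\|\le r$. Set $R:=\max\{r,1+\gamma(r)\}$ and $T:=\tau(1,R)$, with $\tau,\gamma$ from the $V$-ULIM definition. Let $\mu$ be the increasing function from Lemma~\ref{lem:Boundedness_Reachability_Sets_criterion}(ii). We claim that
\[
V(x_t(x_0,u))\le \mu(R,r,2T)\quad \forall t\ge 0 .
\]
Since $V(x_0)\le R$ and $\|u\|\le R$, ULIM yields a time $t_1\in[0,T]$ with $V(x_{t_1})\le 1+\gamma(r)\le R$. On $[0,t_1]$ we have $V(x_t)\le \mu(R,r,T)$ by BRS. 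Restarting at $t_1$ with the shifted input $u(t_1+\cdot)$, whose norm is at most $r$, we again have initial value of $V$ at most $R$. So ULIM yields $t_2\in(t_1,t_1+T]$ with the same property, and BRS bounds $V$ on $[t_1,t_2]$ by $\mu(R,r,T)$.

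There is a technical catch: ULIM allows the hitting time to be $0$, so the iteration might not advance. To fix this, at each step first flow for a fixed time $T$ and then apply ULIM from the state at time $t_k+T$. That state has $V$ bounded by $\mu(R,r,T)$, so we enlarge the radius to $R':=\max\{R,\mu(R,r,T)\}$ and use $T':=\tau(1,R')$. The resulting sequence $t_{k+1}\in[t_k+T,\,t_k+T+T']$ is strictly increasing with steps at least $T>0$, so it covers $\R_+$. Between consecutive times, $V\le \mu(R,r,T+T')$ by BRS applied from $x_{t_k}$. All constants depend only on $r$, and by monotonicity of $\tau$ (which may be assumed increasing in $r$) and of $\mu$ they can be taken increasing in $r$.

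This produces a function $\rho(r)$, increasing in $r$, with $V(x_t(x_0,u))\le\rho(\max\{V(x_0),\|u\|\})$ for all $t\geq 0$. It remains to majorize $\rho$ by a continuous increasing function, for instance by the averaging trick used in the proof of Lemma~\ref{lem:Boundedness_Reachability_Sets_criterion}, and then split into $\sigma(V(x_0))+\gamma(\|u\|)+c$ exactly as in the preceding proposition. The main obstacle is the bookkeeping in the restart step: ensuring the iteration advances in time and that all constants remain uniform in $r$. Forcing a minimal dwell time $T$ before each application of ULIM is what handles this.
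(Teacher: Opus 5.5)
Your proof is correct, but it globalizes $V$-ULIM and $V$-BRS differently from the paper.

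\textbf{The paper's route.} It ties the input bound to the level, taking $\|u\|\le\gamma^{-1}(r/4)$ and $\varepsilon=r/2$. Then ULIM returns the trajectory to $\{V\le 3r/4\}$, strictly inside $\{V\le r\}$. A last-exit argument follows: for any $t$, the last time $t_m\le t$ with $V(x_{t_m})\le r$ must satisfy $t-t_m\le\tau$. Otherwise ULIM restarted at $x_{t_m}$ would produce a later time in the sublevel set; the strict gap $3r/4<r$, together with continuity, is what rules out a hitting time $0$ there. A single BRS application from $x_{t_m}$ then bounds $V(x_t)$. The final estimate comes from substituting $r=\max\{V(x_0),4\gamma(\|u\|)\}$.

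\textbf{Your route.} You iterate forward with a forced dwell time. This removes the need for a strictly smaller return level, and hence for $\gamma^{-1}$, so the case $\gamma\equiv 0$ needs no replacement by a $\mathcal K_\infty$ majorant. The price is two radii $R,R'$ and a two-stage BRS/ULIM step per iteration. The paper's argument is shorter and works with a single radius. Your reduction to the preceding proposition via a continuous increasing majorant of $\rho(\max\{V(x_0),\|u\|\})$ is fine.

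\textbf{One small repair.} The definition only guarantees $\tau(1,R)\ge 0$, so your claim that consecutive times are at least $T>0$ apart is not justified as written. Replace $T$ by $\max\{\tau(1,R),1\}$; enlarging $\tau$ preserves ULIM. Alternatively, use any fixed positive dwell time unrelated to $\tau$, starting the iteration at $t_0=0$.
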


\begin{proof}
Let $\gamma\in \Kinf\cup \{0\}$ and $\tau$ be given by the $V$-ULIM property according to Definition~\ref{def:Vstability}\,\ref{def:Vstability-ULIM}.
Pick any $r>0$ and set $\eps:=\frac{r}{2}$.
Since \eqref{eq:time-delay} has the $V$-ULIM property, there exists a $\tau=\tau(\epsilon,r)$ (more precisely, $\tau=\tau(\frac{r}{2}, \max\{r,\gamma^{-1}(\tfrac{r}{4})\})$ from the $V$-ULIM property), such that
\begin{align}
&V(x_0) \le r,\, \|u\| \leq \gamma^{-1}(\tfrac{r}{4}) \nonumber\\
&\ \Rightarrow \ \exists t\leq \tau:\ V\big(x_t(x_0,u)\big) \leq \frac{r}{2} +\gamma(\|u\|)  \leq \frac{3r}{4}.
\label{eq:ULIM_pGS_ISS_Section2}
\end{align}
Without loss of generality, we can assume that $\tau$ is increasing in $r$. In particular, it is locally integrable.
Defining $\bar\tau(r):=\frac{1}{r}\int_r^{2r}\tau(s)ds$ for $r>0$, we see that $\bar\tau(r) \geq \tau(r)$ and $\bar\tau$ is continuous.
For any $r_2>r_1>0$ via the change of variables $s= \frac{r_2}{r_1}w$, we have also that
\begin{eqnarray*}
\bar\tau(r_2) &=& \frac{1}{r_2}\int_{r_2}^{2r_2}\tau(s)ds = \frac{1}{r_2} \int_{r_1}^{2r_1}\tau\Big(\frac{r_2}{r_1}w\Big)\frac{r_2}{r_1}dw \\
&>& \frac{1}{r_1}\int_{r_1}^{2r_1}\tau(w)dw = \bar\tau(r_1),
\end{eqnarray*}
and thus $\bar\tau$ is increasing. We define further $\bar\tau(0):=\lim_{r\to+0}\bar\tau(r)$ (the limit exists as $\bar\tau$ is increasing).

Since \eqref{eq:time-delay} is $V$-BRS, Lemma~\ref{lem:Boundedness_Reachability_Sets_criterion} implies that there exists a continuous, increasing function $\mu: \R_+^3 \to \R_+$, such that for
all $x_0\in \mathcal X^n, u\in \Uc$ and all $t \geq 0$ the estimate \eqref{eq:8_ISS} holds. From this we conclude that we have the implication
\begin{align}
V(x_0)\le r,\, \|u\| \leq &\gamma^{-1}(\tfrac{r}{4}),\, t \leq \bar\tau(r)\nonumber\\
& \ \Rightarrow\ V\big(x_t(x_0,u)\big) \leq \tilde\sigma(r),
\label{eq:ULIM_pGS_ISS_Section3}
\end{align}
where $\tilde\sigma:r\mapsto \mu\big(r,\gamma^{-1}(\tfrac{r}{4}),\bar\tau(r)\big)$, $r\geq 0$. Note that $\tilde {\sigma}$
is continuous and increasing, since $\mu,\gamma,\bar\tau$ are continuous, increasing functions.

We see from \eqref{eq:ULIM_pGS_ISS_Section3} that $\tilde\sigma(r)\geq V(x_0)$ whenever $V(x_0)\le r$, and thus $\tilde\sigma(r) \geq r > \frac{3r}{4}$ for any $r>0$.

Assume that there exist  $x_0$ with $V(x_0) \le r$, $u\in\Uc$ with $\|u\| \leq \gamma^{-1}(\tfrac{r}{4})$ and $t\geq0$ such that 
$V\big(x_t(x_0,u)\big)  > \tilde \sigma(r)$. Define
\[
t_m:=\sup\{s \in[0,t]: V\big(x_s(x_0,u)\big) \leq r\}\geq 0.
\]
The quantity $t_m$ is well-defined, since $V(x_0) \leq r$.
In view of the cocycle property, it holds that
\[
x_t(x_0,u) = x_{t-t_m}(x_{t_m}(x_0,u),u(\cdot + t_m)),
\]
where clearly $u(\cdot + t_m) \in\Uc$.
Assume that $t-t_m \leq\tau(r)$. Since $V(x_{t_m}(x_0,u))\leq r$,
\eqref{eq:ULIM_pGS_ISS_Section3} implies that $V(x_t(x_0,u)) \leq \tilde \sigma(r)$ for all $t \in [t_m,t]$.
Otherwise, if $t-t_m >\tau(r)$, then due to \eqref{eq:ULIM_pGS_ISS_Section2} there exists $t^* < \tau(r)$, so that
\[
V\big(x_{t^*}\big(x_{t_m}(x_0,u),u(\cdot + t_m)\big)\big) = V(x_{t^*+t_m}(x_0,u)) \leq \frac{3r}{4},
\]
which contradicts the definition of $t_m$, since $t_m+t^*<t$.
Hence
\begin{align}
V(x_0) \le r,\, \|u\| \leq &\gamma^{-1}(\tfrac{r}{4}),\, t\geq 0 \nonumber\\
&\ \Rightarrow \   V(x_t(x_0,u)) \leq \tilde\sigma(r).
\label{eq:ULIM_pGS_ISS_Section4}
\end{align}
Denote $\sigma(r):=\tilde\sigma(r) - \tilde\sigma(0)$, for any $r\geq 0$. Clearly, $\sigma\in\Kinf$.

For each $x_0\in \mathcal X^n$, $u\in\Uc$ define $r:=\max\{V(x_0),4\gamma(\|u\|)\}$.
Then \eqref{eq:ULIM_pGS_ISS_Section4} immediately shows for all $x_0\in \mathcal X^n,\ u\in\Uc,\ t\geq 0$ that
\begin{eqnarray*}
V(x_t(x_0,u)) &\leq& \sigma\big(\max\{V(x_0),4\gamma(\|u\|)\}\big) + \tilde\sigma(0) \\
&\leq& \sigma(V(x_0)) + \sigma\big(4\gamma(\|u\|)\big) + \tilde\sigma(0),
\end{eqnarray*}
which shows that \eqref{eq:time-delay} is $V$-UGB.
\end{proof}

\begin{lemma}
\label{lem:ULIM_plus_GS_implies_UAG}
Let $V\in C(\mathcal X^n,\R_+)$ be an LKF candidate. If \eqref{eq:time-delay} is $V$-ULIM and $V$-UGS, then \eqref{eq:time-delay} is $V$-UAG.
\end{lemma}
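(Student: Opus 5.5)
The plan is to combine the finite-time visit from $V$-ULIM with the stability bound from $V$-UGS, using the cocycle property. Let $\sigma,\gamma_1\in\Kinf$ be the $V$-UGS functions, so that
\[
V(x_t(x_0,u))\le\sigma(V(x_0))+\gamma_1(\|u\|)\qquad\text{for all } x_0,u,\ t\ge0,
\]
and let $\gamma_2\in\Kinf\cup\{0\}$ and $\tau_L(\cdot,\cdot)$ be the gain and time from $V$-ULIM. The candidate asymptotic gain is
\[
\gamma(s):=\sigma\bigl(2\gamma_2(s)\bigr)+\gamma_1(s),
\]
possibly enlarged to a $\Kinf$ function if $\gamma_2=0$; adding anything nonnegative is harmless.

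Fix $\eps,r>0$ and take $x_0$ with $V(x_0)\le r$ and $u$ with $\|u\|\le r$. Choose $\tilde\eps>0$ so small that $\sigma(2\tilde\eps)\le\eps$, and set $\tau(\eps,r):=\tau_L(\tilde\eps,r)$. By $V$-ULIM there is $t^*\in[0,\tau]$ with
\[
V(x_{t^*}(x_0,u))\le\tilde\eps+\gamma_2(\|u\|).
\]
For $t\ge\tau\ge t^*$, the cocycle property gives $x_t(x_0,u)=x_{t-t^*}\bigl(x_{t^*}(x_0,u),u(\cdot+t^*)\bigr)$. Since $\|u(\cdot+t^*)\|\le\|u\|$, applying $V$-UGS from time $t^*$ yields
\[
V(x_t(x_0,u))\le\sigma\bigl(\tilde\eps+\gamma_2(\|u\|)\bigr)+\gamma_1(\|u\|).
\]

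To split the sum, use $\sigma(a+b)\le\sigma(2a)+\sigma(2b)$. This gives the bound
\[
\sigma(2\tilde\eps)+\sigma\bigl(2\gamma_2(\|u\|)\bigr)+\gamma_1(\|u\|)\le\eps+\gamma(\|u\|)\qquad\text{for all } t\ge\tau,
\]
which is exactly the $V$-UAG estimate \eqref{eq:V-UAG_Absch} with uniform $\tau(\eps,r)$.

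The only delicate point is bookkeeping. The gain must be independent of $\eps$ and $r$, which the splitting above ensures. The time $\tau$ must depend only on $(\eps,r)$, which holds because the ULIM time is uniform over $V(x_0)\le r$ and $\|u\|\le r$. Global existence of solutions is already guaranteed by $V$-UGS.
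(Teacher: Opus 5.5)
Your proof is correct and follows essentially the same route as the paper: $V$-ULIM gives a visit time $t^*\le\tau(\tilde\eps,r)$ with $\sigma(2\tilde\eps)\le\eps$. From $t^*$ onward, the cocycle property, $V$-UGS and the splitting $\sigma(a+b)\le\sigma(2a)+\sigma(2b)$ yield the $V$-UAG estimate. The only cosmetic difference is that the paper first merges the gains into $\max\{\gamma_1,\gamma_2\}$, whereas you keep $\gamma_1$ and $\gamma_2$ separate.
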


\begin{proof}
Let $\gamma_1,\sigma\in\Kinf$ be the functions associated to $V$-UGS, see Definition~\ref{def:Vstability}\,\ref{def:Vstability-UGS} and let $\gamma_2\in \Kinf$ be the gain associated to $V$-ULIM, see Definition~\ref{def:Vstability}\,\ref{def:Vstability-ULIM}. 
Consider $\gamma: s \mapsto\max\{\gamma_1(s),\gamma_2(s)\}$, $s\geq 0$, and note that $\gamma$ can be used in the estimates both for $V$-ULIM and $V$-UGS.

Define $\tilde{\gamma}(s)= \sigma(2\gamma(s))+\gamma(s)$, $s\geq0$. We claim that with this gain the $V$-UAG estimates hold. To this end, fix $\varepsilon>0$, $r>0$ and let $\tilde{\varepsilon}:= \tfrac{1}{2} \sigma^{-1}(\varepsilon)>0$.

By the $V$-ULIM property, there exists $\tau=\tau(\tilde{\eps},r)$ such that, 
for all $x_0$ with $V(x_0) \leq r$ and all $u\in\Uc$ with $\|u\| \leq r$, there is a $t\leq
\tau$ such that 
\begin{align}
V(x_t(x_0,u)) \leq \tilde{\eps} + \gamma(\|u\|).
\label{eq:ULIM_ISS_section-tmp3}
\end{align}

In view of the cocycle property and as the system is $V$-UGS, we have for the above $x_0,u,t$ and any $s\geq 0$
\begin{eqnarray*}
V(x_{s+t}(x_0,u)) &=& V\big(x_s(x_t(x_0,u),u(t+\cdot))\big) \\
            &\leq& \sigma\big(V\big(x_t(x_0,u)\big)\big) + \gamma(\|u\|)\\
            &\leq& \sigma\big(\tilde{\eps} + \gamma(\|u\|)\big) + \gamma(\|u\|).
\end{eqnarray*}
Using the inequality $\sigma(a+b)\leq \sigma(2a) + \sigma(2b)$, valid for any $a,b\geq 0$, we obtain that
\begin{align*}
V(x_{s+t}(x_0,u)) \leq \eps + \tilde\gamma(\|u\|), \quad s\geq 0.
\end{align*}
Overall, the system is $V$-UAG with gain $\tilde{\gamma}$ and threshold $\tilde{\tau} = \tilde{\tau}(\varepsilon,r) := \tau(\tilde{\varepsilon},r)$.

\end{proof}

\vspace{3mm}
The following lemma shows how the $V$-UAG property can be \q{upgraded} to the $V$-GUAG property if $V$-UGS holds.
\begin{lemma}
\label{lem:UGS_und_bUAG_imply_UAG}
Let $V\in C(\mathcal X^n,\mathbb R_+)$ be an LKF candidate. If \eqref{eq:time-delay} is $V$-UAG and $V$-UGS, then \eqref{eq:time-delay} is $V$-GUAG.
\end{lemma}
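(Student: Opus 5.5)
The plan is to adapt the classical argument that uniform asymptotic gain on bounded input balls, combined with UGS, can be upgraded to an asymptotic gain valid for all inputs. The core idea: if $\|u\|$ is large relative to $V(x_0)$, the $V$-UGS estimate alone already bounds $V(x_t)$ by a gain term in $\|u\|$. If $\|u\|$ is small relative to $r$, the $V$-UAG property on the ball of radius $r$ applies directly.

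Let $\sigma,\gamma_1\in\Kinf$ be the $V$-UGS functions from Definition~\ref{def:Vstability}\,\ref{def:Vstability-UGS}. Let $\gamma_2$ and $\tau(\cdot,\cdot)$ be the $V$-UAG gain and time from Definition~\ref{def:Vstability}\,\ref{def:Vstability-UAG}. I would take as new gain
\[
\hat\gamma(s):=\max\{\gamma_2(s),\ \sigma(s)+\gamma_1(s)\},
\]
after replacing $\gamma_2$ by a $\Kinf$ majorant if it is identically zero. I would also tentatively enlarge $\sigma$ so that $\sigma(s)\ge s$; this is harmless since $\sigma$ is only an upper bound.

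Fix $\eps,r>0$ and define $\hat\tau(\eps,r):=\tau(\eps,r)$. Take any $x_0$ with $V(x_0)\le r$, any $u\in\Uc$, and any $t\ge\hat\tau$. There are two cases.

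\emph{Case 1: $\|u\|\le r$.} Here $V$-UAG gives directly $V(x_t(x_0,u))\le\eps+\gamma_2(\|u\|)\le \eps+\hat\gamma(\|u\|)$.

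\emph{Case 2: $\|u\|> r\ge V(x_0)$.} Here $V$-UGS yields
\[
V(x_t(x_0,u))\le \sigma(V(x_0))+\gamma_1(\|u\|)\le \sigma(\|u\|)+\gamma_1(\|u\|)\le\hat\gamma(\|u\|)\le \eps+\hat\gamma(\|u\|).
\]
In both cases inequality \eqref{eq:V-UAG_Absch} holds for all inputs, which is $V$-GUAG.

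I do not expect a serious obstacle. The only points needing care are that $\hat\gamma\in\Kinf$ (it is a maximum of $\Kinf$ functions) and that the time $\hat\tau$ depends only on $(\eps,r)$, not on $\|u\|$. That holds because Case 2 requires no waiting time at all. Global existence of solutions is already provided by $V$-UGS together with the BIC property.
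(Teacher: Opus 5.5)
Your proposal is correct and follows essentially the same route as the paper: the same case split on $\|u\|\le r$ versus $\|u\|>r\ge V(x_0)$, with $V$-UAG used directly in the first case and the $V$-UGS bound $\sigma(V(x_0))\le\sigma(\|u\|)$ in the second. This gives the same time $\tau(\eps,r)$ and an asymptotic gain equivalent to the paper's $\gamma+\sigma$. Your extra normalization $\sigma(s)\ge s$ is unnecessary but harmless.
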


\begin{proof}
Pick any $\eps>0$, $r>0$, and let $\tau$ and $\gamma$ be as in the formulation of the $V$-UAG property.
Let $x_0 \in \mathcal X^n$ with $V(x_0)\le r$, and $u\in\Uc$ be arbitrary.
If $\|u\|\leq r$, then \eqref{eq:V-UAG_Absch} is the desired implication.

Let $\|u\| > r$. Hence it holds that $\|u\| > V(x_0)$.
Due to $V$-UGS of \eqref{eq:time-delay}, it holds for all $t, x_0, u$ that
\[
V(x_t(x_0,u)) \leq \sigma(V(x_0)) + \gamma(\|u\|),
\]
where we assume that $\gamma$ is the same as in the definition of a $V$-UAG property (otherwise pick the maximum of both).
For $\|u\| > V(x_0)$ we obtain that
\[
V(x_t(x_0,u)) \leq \sigma(\|u\|) + \gamma(\|u\|),
\]
and thus for all $x_0 \in \mathcal X^n$, $u\in \Uc$ it holds that
\begin{equation*}
t \geq \tau \quad \Rightarrow \quad V(x_t(x_0,u)) \leq \eps + \gamma(\|u\|) +\sigma(\|u\|),
\end{equation*}
which shows the $V$-GUAG property with the asymptotic gain $\gamma + \sigma$.
\end{proof}

\vspace{3mm}
The final technical lemma of this section is the following.
\begin{lemma}
Let $V\in C(\mathcal X^n,\R_+)$ be an LKF candidate. If \eqref{eq:time-delay} is $V$-GUAG and $V$-UGS, then \eqref{eq:time-delay} is $V$-ISS.
\label{lem:UAG_implies_ISS}
\end{lemma}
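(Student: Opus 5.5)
The plan follows the classical argument that uniform global stability together with a global uniform asymptotic gain yields ISS (as in \cite{SoW96, MiW18b}), with $\|\cdot\|$ replaced by $V$ throughout. Fix $\sigma,\gamma_1\in\Kinf$ from $V$-UGS, and $\gamma_2$ and $\tau(\eps,r)$ from $V$-GUAG. Set $\gamma:=\max\{\gamma_1,\gamma_2\}$, so that $\gamma$ serves in both estimates. The aim is to construct $\beta\in\KL$ with
\[
V(x_t(x_0,u))\le \beta(V(x_0),t)+\gamma(\|u\|)\quad\forall t\ge 0 .
\]

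First fix $r>0$ and consider all $x_0$ with $V(x_0)\le r$ and arbitrary $u\in\Uc$. $V$-UGS gives the uniform bound $V(x_t)\le \sigma(r)+\gamma(\|u\|)$ for all $t\ge0$. $V$-GUAG gives, for $\eps_n:=2^{-n}\sigma(r)$, a time $\tau_n:=\tau(\eps_n,r)$ after which $V(x_t)\le \eps_n+\gamma(\|u\|)$. We may take $\tau_n$ strictly increasing in $n$, with $\tau_0=0$ and $\tau_n\to\infty$. Define $\omega_r(t):=\eps_n$ for $t\in[\tau_n,\tau_{n+1})$. Then
\[
V(x_t)\le \omega_r(t)+\gamma(\|u\|),
\]
and $\omega_r$ is nonincreasing and tends to $0$. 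Replacing it by a continuous, strictly decreasing majorant, for instance the piecewise linear interpolation through the points $(\tau_n,2\eps_{n-1})$ plus $\sigma(r)e^{-t}$, yields $\tilde\omega_r\in\LL$ with the same estimate.

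Next we make this dependence on $r$ uniform and monotone. We may assume $\tau(\eps,r)$ is nondecreasing in $r$, for example by replacing $\tau(\eps,r)$ by $\sup_{s\le r}\tau(\eps,s)$ (which is finite since the estimate for radius $r$ covers all smaller $s$) or by using $\lceil r\rceil$. Then set $\hat\beta(r,t):=\tilde\omega_r(t)$, which is nondecreasing in $r$, and obtain a $\KL$ majorant by the standard device, e.g.
\[
\beta(r,t):=\tfrac1r\int_r^{2r}\hat\beta(s,t)\,ds+ r e^{-t},
\]
as in the averaging trick of Lemma~\ref{lem:Boundedness_Reachability_Sets_criterion}. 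Alternatively, one can invoke the standard lemma that a function $\hat\beta$, nondecreasing in $r$, tending to $0$ in $t$ for each $r$, and with $\hat\beta(r,0)\le\sigma(r)\to0$ as $r\to0$, admits a $\KL$ upper bound (cf.\ \cite{Mir23}). Finally apply the resulting estimate with $r:=V(x_0)$.

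The main obstacle is ensuring continuity and the $\K$-property of $\beta$ in the first argument at $r=0$, which is exactly where $V$-UGS is essential. Because $\eps_n$ is defined relative to $\sigma(r)$, we have $\hat\beta(r,t)\le 2\sigma(r)$, and this bound vanishes as $r\to0$. A secondary concern is that $\tau(\eps,r)$ may be chosen badly, for instance non-monotone or discontinuous; this is handled by the monotone regularization described above before the interpolation.
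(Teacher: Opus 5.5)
Your construction follows the paper's proof closely: dyadic levels $\eps_n=2^{-n}\sigma(r)$, $\tau_0=0$ supplied by $V$-UGS, an $\LL$-majorant of the resulting step function, and control at $r=0$ through a bound by a multiple of $\sigma(r)$. The monotonization step, however, fails. You claim that once $\tau(\eps,r)$ is nondecreasing in $r$, the function $\hat\beta(r,t):=\tilde\omega_r(t)$ is nondecreasing in $r$. This does not follow. Both the levels and the breakpoints $\tau_n(r)=\tau(2^{-n}\sigma(r),r)$ depend on $r$, and enlarging $r$ also enlarges the first argument of $\tau$. For $\tau$ nonincreasing in $\eps$, the breakpoints for larger $r$ can therefore come earlier. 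Concretely, nothing prevents $\tau(\eps,r)=T(\eps)$ with $T(\tfrac12)=1$, $T(\tfrac38)=2$, $T(\tfrac14)=10$. With $\sigma(r)=1<\tfrac32=\sigma(r')$, your interpolation gives $\tilde\omega_{r'}(2)=\tfrac32+\tfrac32e^{-2}<\tfrac{17}{9}+e^{-2}=\tilde\omega_r(2)$. Both of your closing devices need this monotonicity. The inequality $\frac1r\int_r^{2r}\hat\beta(s,t)\,ds\ge\hat\beta(r,t)$ is guaranteed only when $\hat\beta(\cdot,t)$ is nondecreasing, and the majorant lemma you quote assumes it. So, as written, you do not obtain a $\beta\in\KL$ dominating your estimate.

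The paper avoids this by setting $\hat\beta(r,t):=\sup_{0\le s\le r}\omega(s,t)$, which is monotone by construction and still bounded by a multiple of $\sigma(r)$. With your interpolation through $(\tau_n,2\eps_{n-1})$ plus $\sigma(r)e^{-t}$, that bound is $5\sigma(r)$ rather than $2\sigma(r)$, which is harmless. If you adopt the supremum, you must still check that it tends to $0$ as $t\to\infty$, and this is where regularizing $\tau$ is genuinely needed. Take $\tau(\eps,r)$ to be the minimal admissible time, which is nonincreasing in $\eps$ and nondecreasing in $r$. Note that your $\sup_{s\le r}\tau(\eps,s)$ of an arbitrary choice need not be finite; the estimate for radius $r$ only tells you that you may \emph{replace} $\tau(\eps,s)$ by $\tau(\eps,r)$. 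Then for $s\le r$ with $\sigma(s)\ge\eta$ you have $\tilde\omega_s(t)\le 2^{2-N}\sigma(r)+\sigma(r)e^{-t}$ for all $t$ beyond a time bounded by $\tau(2^{-N}\eta,r)$, up to the shifts used to make $\tau_n$ strictly increasing, uniformly in $s$. For $\sigma(s)<\eta$ the bound $5\sigma(s)<5\eta$ suffices. Alternatively, drop the $r$-dependent grid altogether. With the minimal time, $\omega(r,t):=\inf\{\eps>0:\tau(\eps,r)\le t\}$ is nondecreasing in $r$, nonincreasing in $t$, tends to $0$, and satisfies $\omega(r,0)\le\sigma(r)$. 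By continuity of $t\mapsto V(x_t(x_0,u))$ it also gives $V(x_t(x_0,u))\le\omega(V(x_0),t)+\gamma(\|u\|)$, and the majorant lemma you quote then applies directly.
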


\begin{proof}
Assume that \eqref{eq:time-delay} is $V$-GUAG and $V$-UGS and that $\gamma$ in \eqref{eq:V-GSAbschaetzung} and \eqref{eq:V-UAG_Absch} are the same (otherwise, let $\gamma$ be the maximum of the two).
Fix an arbitrary $r \in \R_+$. We are going to
construct a function $\beta \in \KL$ so that \eqref{eq:V-iss_sum} holds.

From $V$-UGS, there exist $\gamma,\sigma \in \Kinf$ such
that for all $ t\geq 0$, all $x_0 \in \mathcal X^n$ with $V(x_0)\le r$, and all $ u \in \Uc$,
\begin{equation}
\label{lem7:help}
V(x_t(x_0,u)) \leq \sigma(r) + \gamma(\|u\|).
\end{equation}
Define $\eps_n:= 2^{-n}  \sigma(r)$, for $n \in \N$. The $V$-GUAG property implies that there exists a sequence of times
$\tau_n:=\tau(\eps_n,r)$, which we may without loss of generality assume
to be strictly increasing, such that for all $x_0 \in \mathcal X^n$ with $V(x_0)\le r$, and all $u \in \Uc$
\[
V(x_t(x_0,u))  \leq \eps_n + \gamma(\|u\|),\quad  \forall t \geq \tau_n.
\]
From \eqref{lem7:help}, we see that we may set $\tau_0 := 0$.
Define $\omega(r,\tau_n):=\eps_{n-1}$, for $n \in \N$ and $\omega(r,0):=2\eps_0=2\sigma(r)$.

%
%
%

Now extend the definition of $\omega$ to a function $\omega(r,\cdot) \in \LL$.
We obtain for $t \in (\tau_n,\tau_{n+1})$, $n=0,1,\ldots$ that whenever $V(x_0)\le r$ and $u\in\Uc$, 
it holds that
\[
V(x_t(x_0,u))  \leq \eps_n + \gamma(\|u\|)< \omega(r,t) + \gamma(\|u\|).
\]
Doing this for all $r \in \R_+$, we obtain the definition of the function $\omega$.

Now define $\hat \beta(r,t):=\sup_{0 \leq s \leq r}\omega(s,t) \geq
\omega(r,t)$ for $(r,t) \in \R_+^2$. From this definition, it follows that,
for each $t\geq 0$, $\hat\beta(\cdot,t)$ is
increasing in $r$ and $\hat\beta(r,\cdot)$ is decreasing in $t$ for each $r>0$ as
every $\omega(r,\cdot) \in \LL$.
Moreover, for each fixed $t\geq0$, $\hat \beta(r,t) \leq \sup_{0 \leq s \leq r}\omega(s,0)=2\sigma(r)$. This implies that $\hat\beta$ is continuous in the first argument at $r=0$ for any fixed $t\geq0$.

By \cite[Proposition 9]{MiW19b}, $\hat\beta$ can be upper bounded by certain $\tilde{\beta}\in \KL$, and the estimate
\eqref{eq:V-iss_sum} is satisfied with such a $\beta$.
\end{proof}

\subsection{ISS superposition result via mixed stability notions}
\label{sec:ISS superposition result via mixed stability notions}

Our proof technique for the main results of this work is based on the use of Theorem~\ref{thm:ISS-Superposition-theorem-with-V} ($V$-ISS superposition theorem for delay systems) characterizing $V$-ISS using combinations of nominally weaker notions of stability. 
Here we employ variations of the stability concepts formulated through an LKF candidate $V$ to show  a further ISS superposition result. The main difference to previous concepts is that the requirement on the initial condition $x_0$ for a dynamic property to hold is formulated in terms of the norm $\|\cdot\|$ as opposed to using the LKF candidate $V$.

\begin{definition}
Let $V\in C(\mathcal X^n,\R_+)$ be an LKF candidate as in Definition~\ref{def:Sandwiched-map}. 
System \eqref{eq:time-delay} is said to have the \emph{mixed $V$-uniform limit property (mixed $V$-ULIM)}, if there exists
    $\gamma\in\K\cup\{0\}$ such that for every $\eps>0$ and for every $r>0$ there
    exists a $\tau = \tau(\eps,r)$ such that 
for all $x_0$ with $\|x_0\| \leq r$ and all $u\in\Uc$ with $\|u\| \leq r$ there is a $t\leq
\tau$ such that 
\begin{align}
V(x_t(x_0,u)) \leq \eps + \gamma(\|u\|).
\label{eq:mixed-ULIM_ISS_section}
\end{align}

\end{definition}

\begin{remark}
\label{rem:Nonmixed-implies-mixed}
Let $V$ be a fixed LKF candidate.
Then if \eqref{eq:time-delay} is $V$-ULIM, it is also mixed $V$-ULIM. 
Indeed, let \eqref{eq:time-delay} be $V$-ULIM. If $x_0$ satisfies $\|x_0\|\leq r$ then $V(x_0)\leq 
\psi_2(r)$,
see \eqref{eq:Sandwich}. Hence the $V$-ULIM property immediately yields the mixed $V$-ULIM property.
\end{remark}

\begin{proposition}
\label{prop:mixed-ULIM_plus_V-GS_implies_UAG}
Let $V\in C(\mathcal X^n,\R_+)$ be an LKF candidate. If \eqref{eq:time-delay} is mixed $V$-ULIM and $V$-UGS, then \eqref{eq:time-delay} is ISS.
\end{proposition}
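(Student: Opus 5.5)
Since $V$-UGS implies UGS (Proposition~\ref{prop:V-UGS_and_UGS}), it suffices to establish ULIM in the norm sense, i.e. the $\|\cdot\|$-version of ULIM. Then the classical ISS superposition theorem, which is Theorem~\ref{thm:ISS-Superposition-theorem-with-V} with $V=\|\cdot\|$ (ULIM and UGS $\Rightarrow$ ISS), yields ISS.

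The natural route is to pass through a full delay period. Fix $\eps,r>0$ and take $x_0$ with $\|x_0\|\le r$ and $\|u\|\le r$. By mixed $V$-ULIM with accuracy $\tilde\eps$ (chosen below), there is a $t^*\le \tau(\tilde\eps,r)$ with $V(x_{t^*}(x_0,u))\le \tilde\eps+\gamma(\|u\|)$. Restart the trajectory at $t^*$ via the cocycle property, with initial state $x_{t^*}$ and shifted input $u(t^*+\cdot)$. Proposition~\ref{prop:V-UGS_and_V-ISS} gives $\sigma_1,\gamma_1\in\Kinf$ with
\[
\|x_{t^*+s}(x_0,u)\|\le \sigma_1\bigl(V(x_{t^*}(x_0,u))\bigr)+\gamma_1(\|u\|)\quad\forall s\ge\theta .
\]
Taking $s=\theta$ and using $\sigma_1(a+b)\le\sigma_1(2a)+\sigma_1(2b)$, we get at time $t:=t^*+\theta\le \tau(\tilde\eps,r)+\theta$ the bound
\[
\|x_t(x_0,u)\|\le \sigma_1(2\tilde\eps)+\sigma_1(2\gamma(\|u\|))+\gamma_1(\|u\|).
\]
We then choose $\tilde\eps:=\tfrac12\sigma_1^{-1}(\eps)$, which makes the first term equal to $\eps$. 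This establishes norm-ULIM with time $\tilde\tau(\eps,r):=\tau(\tilde\eps,r)+\theta$ and gain $\tilde\gamma:=\sigma_1\circ2\gamma+\gamma_1$. If mixed $V$-ULIM only provides $\gamma\in\K\cup\{0\}$, we first majorize $\gamma$ by a $\Kinf$ function; if $\gamma=0$, the term simply drops out.

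The only real point of care is the delay shift. $V$-smallness controls merely $|x(0)|$-type quantities, not the whole history norm, so the bound on $\|x_t\|$ is only available one delay period $\theta$ after the time provided by mixed $V$-ULIM. That is exactly why Proposition~\ref{prop:V-UGS_and_V-ISS} is invoked at $s=\theta$. We also check that the classical superposition theorem indeed applies with the ULIM definition uniform in both $x_0$ and $u$ on balls, which is the form obtained above. Alternatively, one can redo directly the chain of Lemma~\ref{lem:ULIM_plus_GS_implies_UAG}, Lemma~\ref{lem:UGS_und_bUAG_imply_UAG} and Lemma~\ref{lem:UAG_implies_ISS} with $V=\|\cdot\|$.
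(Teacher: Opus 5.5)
Your proposal is correct and follows essentially the paper's proof: mixed $V$-ULIM, a restart via the cocycle property, and Proposition~\ref{prop:V-UGS_and_V-ISS} after one delay period $\theta$, followed by Theorem~\ref{thm:ISS-Superposition-theorem-with-V} with $V=\|\cdot\|$. The only difference is minor: you evaluate the history bound at the single time $t^*+\theta$ to get norm-ULIM and use item (vi) (ULIM $\wedge$ UGS). The paper instead keeps the bound for all $s\geq\theta$ to get norm-UAG and uses item (iii) (UAG $\wedge$ UGS).
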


\begin{proof}
First of all, by Proposition~\ref{prop:V-UGS_and_V-ISS}, $V$-UGS implies that there are $\bar{\sigma},\bar{\gamma} \in\Kinf$ such that for all $t\geq \theta$, $x_0 \in \mathcal X^n$ and $u\in\Uc$ we have 
\begin {equation}
\label{eq:V-to-phi-UGS2}
\|x_t(x_0,u)\| \leq \bar{\sigma}(V(x_0)) + \bar{\gamma}( \|u\|).
\end{equation}


Fix $\eps>0$ and $r>0$. By the mixed $V$-ULIM property, there exists $\gamma\in\Kinf$, independent of $\eps$ and $r$,
and $\tau=\tau(\eps,r)$ so that 
for all $x_0$ with $\|x_0\| \leq r$ and all $u\in\Uc$ with $\|u\| \leq r$ there is a $t\leq
\tau$ such that 
\begin{eqnarray}
V(x_t(x_0,u)) \leq \eps + \gamma(\|u\|).
\label{eq:ULIM_ISS_section-tmp2}
\end{eqnarray}

In view of the cocycle property, and the estimate \eqref{eq:V-to-phi-UGS2},
we have for the above $x_0,u,t$ and any $s\geq \theta$
\begin{eqnarray*}
\|x_{s+t}(x_0,u)\| &=& \big\|x_s(x_t(x_0,u),u(t+\cdot))\big\| \\
            &\leq& \bar\sigma\big(V\big(x_t(x_0,u)\big)\big) + \bar\gamma(\|u\|)\\
            &\leq& \bar\sigma\big(\eps + \gamma(\|u\|)\big) + \bar\gamma(\|u\|).
\end{eqnarray*}
Now let $\tilde\eps := \bar\sigma(2 \eps)>0$.
Using again $\sigma(a+b)\leq \sigma(2a) + \sigma(2b)$, we proceed to
\begin{eqnarray*}
\|x_{s+t}(x_0,u)\| \leq \tilde\eps + \tilde\gamma(\|u\|),
\end{eqnarray*}
where $\tilde\gamma(r) = \bar\sigma(2\gamma(r)) + \bar\gamma(r)$, $r\geq 0$.

Overall, for any $\tilde\eps>0$ and any $r>0$ there exists $\tilde\tau=\tilde\tau(\tilde\eps,r) = \tau(\frac{1}{2}\sigma^{-1}(\tilde\eps),r)+\theta$,
so that
\begin{align*}
\|x_0\|\leq r\ \wedge\ \|u\|\leq &r \ \wedge\ t\geq \tilde\tau \\
&\qrq \|x_t(x_0,u)\| \leq \tilde\eps + \tilde\gamma(\|u\|),
\end{align*}
which shows the UAG property of \eqref{eq:time-delay}.

Since $V$-UGS implies UGS by Proposition~\ref{prop:V-UGS_and_UGS}, Theorem~\ref{thm:ISS-Superposition-theorem-with-V} with $\tilde V(x_0):=\|x_0\|$, $x_0 \in \mathcal X^n$, shows ISS of \eqref{eq:time-delay}.
\end{proof}

%
%
%
        %
        %
	%
%
%
%
%



\bibliographystyle{abbrv}
\bibliography{Mir_LitList_NoMir,MyPublications,addition}

@article{CKP23,
  title={The {ISS} framework for time-delay systems: a survey},
  author={Chaillet, Antoine and Karafyllis, Iasson and Pepe, Pierdomenico and Wang, Yuan},
  journal={Mathematics of Control, Signals, and Systems},
  pages={1--70},
  year={2023},
  publisher={Springer}
}

@article{CGP21,
  title = {{L}yapunov--{K}rasovskii characterizations of integral input-to-state stability of delay systems with non-strict dissipation rates},
  author = {Chaillet, Antoine and Goksu, Gokhan and Pepe, Pierdomenico},
  journal = {IEEE Transactions on Automatic Control},
  year = {2021},
  publisher = {IEEE}
}

@phdthesis{Kan17,
  title = {Output stability analysis for nonlinear systems with time delays},
  author = {Kankanamalage, Hasala Senpathy Karunaratne Gallolu},
  school = {Florida Atlantic University, Boca Raton, FL},
  year = {2017},
  publisher = {Florida Atlantic University}
}

@book{KaK19,
  title = {Input-to-State Stability for {PDE}s},
  author = {Karafyllis, Iasson and Krstic, Miroslav},
  isbn = {9783319910116},
  publisher = {Springer, Cham},
  year = {2019}
}

@article{KLW17,
  title = {On {L}yapunov-{K}rasovskii characterizations of input-to-output stability},
  author = {Kankanamalage, Hasala Gallolu and Lin, Yuandan and Wang, Yuan},
  journal = {IFAC-PapersOnLine},
  volume = {50},
  number = {1},
  pages = {14362--14367},
  year = {2017},
  publisher = {Elsevier}
}

@article{CPM17,
  title = {Is a point-wise dissipation rate enough to show {ISS} for time-delay systems?},
  author = {Chaillet, Antoine and Pepe, Pierdomenico and Mason, Paolo and Chitour, Yacine},
  journal = {IFAC-PapersOnLine},
  volume = {50},
  number = {1},
  pages = {14356--14361},
  year = {2017},
  publisher = {Elsevier}
}

@article{chaillet2023growth,
  title={Growth conditions for global exponential stability and exp-{ISS} of time-delay systems under point-wise dissipation},
  author={Chaillet, Antoine and Karafyllis, Iasson and Pepe, Pierdomenico and Wang, Yuan},
  journal={Systems \& Control Letters},
  volume={178},
  pages={105570},
  year={2023},
  publisher={Elsevier}
}

@Book{KRA59,
  author =       "N.N. Krasovskii",
  title =        "Problems of the theory of stability of motion ",
  publisher =    "{Stanford Univ. Press}",
  year =         1963
}

@book{HaV93,
  title = {Introduction to Functional-Differential Equations},
  publisher = {Springer},
  year = {1993},
  author = {Hale, Jack K. and Verduyn Lunel, Sjoerd M.},
  address = {New York}
}

@article{KPJ08,
  author = {Karafyllis, Iasson and Pepe, Pierdomenico and Jiang, Zhong-Ping},
  title = {Input-to-output stability for systems described by retarded functional differential equations},
  journal = {European Journal of Control},
  year = {2008},
  volume = {14},
  pages = {539--555},
  number = {6},
  doi = {10.3166/ejc.14.539-555},
  issn = {09473580},
  url = {http://dx.doi.org/10.3166/ejc.14.539-555}
}

@article{LSW96,
  author = {Lin, Yuandan and Sontag, Eduardo D. and Wang, Yuan},
  title = {A smooth converse {L}yapunov theorem for robust stability},
  journal = {SIAM Journal on Control and Optimization},
  year = {1996},
  volume = {34},
  pages = {124--160},
  number = {1},
  publisher = {SIAM}
}

@article{PeJ06,
  author = {Pepe, P. and Jiang, Zhong-Ping},
  title = {A {L}yapunov--{K}rasovskii methodology for {ISS} and {iISS} of time-delay systems},
  journal = {Systems \& Control Letters},
  year = {2006},
  volume = {55},
  pages = {1006--1014},
  number = {12},
  doi = {10.1016/j.sysconle.2006.06.013},
  issn = {01676911}
}

@incollection{Son08,
  author = {Sontag, Eduardo D.},
  title = {Input to State Stability: Basic Concepts and Results},
  booktitle = {Nonlinear and Optimal Control Theory},
  publisher = {Springer},
  year = {2008},
  chapter = {3},
  pages = {163--220},
  address = {Heidelberg},
  doi = {10.1007/978-3-540-77653-6\_3},
  isbn = {978-3-540-77644-4}
}

@article{Son89,
  author = {Sontag, E. D.},
  title = {Smooth stabilization implies coprime factorization},
  journal = {IEEE Transactions on Automatic Control},
  year = {1989},
  volume = {34},
  pages = {435--443},
  number = {4},
  issn = {00189286}
}

@article{SoW96,
  author = {Sontag, E. D. and Wang, Yuan},
  title = {New characterizations of input-to-state stability},
  journal = {IEEE Transactions on Automatic Control},
  year = {1996},
  volume = {41},
  pages = {1283--1294},
  number = {9},
  issn = {0018-9286},
  publisher = {IEEE}
}

@inproceedings{loko2024growth,
  title={Growth conditions to ensure input-to-state stability of time-delay systems under point-wise dissipation},
  author={Loko, Epiphane and Chaillet, Antoine and Wang, Yuan and Karafyllis, Iasson and Pepe, Pierdomenico},
  booktitle={2024 IEEE 63rd Conference on Decision and Control (CDC)},
  pages={7902--7907},
  year={2024},
  organization={IEEE}
}

@article{mancilla2024forward,
  title={Forward completeness does not imply bounded reachability sets and global asymptotic stability is not necessarily uniform for time-delay systems},
  author={Mancilla-Aguilar, Jose L and Haimovich, Hernan},
  journal={Automatica},
  volume={167},
  pages={111788},
  year={2024},
  publisher={Elsevier}
}

@INPROCEEDINGS{MWC24b,
  author={Andrii Mironchenko and Fabian Wirth and Antoine Chaillet and Lucas Brivadis},
  title = {{ISS} {L}yapunov-{K}rasovskii theorem with point-wise dissipation: a ${V}$-stability approach},
	booktitle = {Proc. of 63rd IEEE Conference on Decision and Control},
  pages = {7896--7901},
  publisher = {IEEE},	
  year = {2024}
}

@INPROCEEDINGS{MIWICHABR24super,
  booktitle = {Proc. of 26th International Symposium on Mathematical Theory of Networks and Systems},
  title={Characterizations of input-to-state stability for time-delay systems},
  author={A. Mironchenko and F. Wirth and A. Chaillet and L. Brivadis},
  pages={512--515},
  year={2024}
}

@book{Mir23,
  title={Input-to-State Stability: Theory and Applications},
  author={Mironchenko, Andrii},
  publisher={Springer Nature},
  year={2023}
}

@PHDTHESIS{Mir12,
  author = {Mironchenko, Andrii},
  title = {Input-to-state stability of infinite-dimensional control systems},
  school = {Department of Mathematics and Computer Science, University of Bremen},
  year = {2012}
}

@article{MiW19a,
  author = {Andrii Mironchenko and Fabian Wirth},
  title = {Non-coercive {L}yapunov functions for infinite-dimensional systems},
  journal = {Journal of Differential Equations},
  year = {2019},
	volume={105},
	issue={11},
  pages={7038--7072}
}

@ARTICLE{MiW18b, 
author = {Andrii Mironchenko and Fabian Wirth},
journal={IEEE Transactions on Automatic Control}, 
title={Characterizations of input-to-state stability for infinite-dimensional systems}, 
year={2018}, 
volume={63}, 
number={6}, 
pages={1602-1617}, 
ISSN={0018-9286}
}

@ARTICLE{MiW19b, 
author = {Andrii Mironchenko and Fabian Wirth},
journal={Mathematics of Control, Signals, and Systems}, 
title={Existence of non-coercive {L}yapunov functions is equivalent to integral uniform global asymptotic stability}, 
year={2019}, 
volume={31}, 
number={4}, 
pages={1-26}
}

@INPROCEEDINGS{MiW17e,
  author = {Mironchenko, Andrii and Fabian Wirth},
  title = {Input-to-state stability of time-delay systems: Criteria and open problems},
  booktitle = {Proc. of 56th IEEE Conference on Decision and Control},	
  pages={3719--3724},
  year={2017},
  organization={IEEE}	
}

@article{MiP20,
    title={Input-to-state stability of infinite-dimensional systems: Recent results and open questions},
    author={Andrii Mironchenko and Christophe Prieur},
		journal = {SIAM Review},
		volume = {62},
		number = {3},
		pages = {529-614},
		year = {2020},
}

@phdthesis{thesisLoko2025,
    author = {Dagb{\'e}gnon Epiphane Loko},
    title = {Analyse de la Stabilité des Systèmes de Dimension
Infinie et Eﬀets des Perturbations: Théorie et Applications},
    school = {École Nationale des Ponts et Chaussées, Paris},
    year = 2025
}

@article{orlowski2022adaptive,
  title={Adaptive control of {Lipschitz} time-delay systems by sigma modification with application to neuronal population dynamics},
  author={Or{\l}owski, Jakub and Chaillet, Antoine and Destexhe, Alain and Sigalotti, Mario},
  journal={Systems \& Control Letters},
  volume={159},
  pages={105082},
  year={2022},
  publisher={Elsevier}
}

@article{Pepe07-correct,
  title = {On {L}iapunov--{K}rasovskii functionals under {C}arath{\'e}odory conditions},
  author = {Pepe, Pierdomenico},
  journal = {Automatica},
  volume = {43},
  number = {4},
  pages = {701--706},
  year = {2007}
}

@article{pepe2007problem,
  title={The problem of the absolute continuity for {L}yapunov--{K}rasovskii functionals},
  author={Pepe, Pierdomenico},
  journal={IEEE Transactions on Automatic Control},
  volume={52},
  number={5},
  pages={953--957},
  year={2007}
}

@article{driver1962existence,
  title={Existence and stability of solutions of a delay-differential system},
  author={Driver, Rodney D.},
  journal={Archive for Rational Mechanics and Analysis},
  volume={10},
  number={1},
  pages={401--426},
  year={1962}
}



\end{document}

--------------------